\newtheorem{theorem}{Theorem}[section]
\newtheorem{definition}{Definition}[section]
\newtheorem{lemma}{Lemma}[section]
\newtheorem{remark}{Remark}[section]
\DeclareMathOperator{\argmin}{argmin}
\DeclareMathOperator{\dist}{dist}
\newcommand{\norm}[1]{\left\lVert#1\right\rVert}
\newcommand{\RR}
{\ensuremath{\mathbb{R}}}
\newcommand{\PP}
{\ensuremath{\mathbb{P}}}
\newcommand{\Xb}
{\ensuremath{\textbf{X}}}
\newcommand{\Yb}
{\ensuremath{\textbf{Y}}}
\newcommand{\NN}
{\ensuremath{\mathbb{N}}}
\newcommand{\Fscr}{\ensuremath{\mathscr{F}}}
\newcommand{\Pas}{\ensuremath{(\mathbb{P}\text{-a.s.})}}
\newcommand{\Hc}{\ensuremath{\mathcal{H}}}
\newcommand{\Fc}{\ensuremath{\mathcal{F}}}
\newcommand{\EE}{\ensuremath{\mathbb{E}}}
\title{Inexact and Stochastic Gradient Optimization Algorithms with Inertia and Hessian Driven Damping}
\author{
Harsh Choudhary\thanks{Department of Computer Science and Engineering, Czech Technical University, Prague.
\url{choudharyharsh122@gmail.com}}  \and 
Jalal Fadili\thanks{Normandie Univ, ENSICAEN, CNRS, GREYC, Caen, France.
\url{Jalal.Fadili@greyc.ensicaen.fr}} \and  
Vyacheslav Kungurtsev\thanks{Department of Computer Science and Engineering, Czech Technical University, Prague. \url{vyacheslav.kungurtsev@fel.cvut.cz}} 
}
\begin{document}

\maketitle

\begin{abstract}
In a real Hilbert space setting, we study the convergence properties of an inexact gradient algorithm featuring both viscous and Hessian driven damping for convex differentiable optimization. In this algorithm, the gradient evaluation can be subject to deterministic and stochastic perturbations. In the deterministic case, we show that under appropriate summability assumptions on the perturbation, our algorithm enjoys fast convergence of the objective values, of the gradients and weak convergence of the iterates toward a minimizer of the objective. In the stochastic case, assuming the perturbation is zero-mean, we can weaken our summability assumptions on the error variance and provide fast convergence of the values both in expectation and almost surely. We also improve the convergence rates from $\mathcal{O}(\cdot)$ to $o(\cdot)$ in almost sure sense. We also prove almost sure summability property of the gradients, which implies the almost sure fast convergence of the gradients towards zero. We will highlight the trade-off between fast convergence and the applicable regime on the sequence of errors in the gradient computations. We finally report some numerical results to support our findings.
\end{abstract}

\section{Introduction}\label{s:intro}
\subsection{Problem statement and motivations}
In this paper, we seek to solve
\begin{equation}\label{eq:mainprob}
\min_{x \in \Hc} f(x) ,
\end{equation}
where $\Hc$ is a real Hilbert space. Throughout the paper, we make the following standing assumptions:
\begin{equation}\tag{$\mathrm{H}_f$}\label{eq:mainassump}
\begin{cases}
\text{$f$ is a proper convex smooth function on $\Hc$}, \\
\text{$\nabla f$, the gradient of $f$, is $L$-Lipschitz continuous}, \\ 
\text{and $S := \argmin_{\Hc} f\neq \emptyset$.}
\end{cases}
\end{equation}

Our aim is to propose and study an inertial gradient algorithm, featuring both viscous damping (friction) and Hessian-driven geometric damping, where the first-order gradient information is only accessible up to some additive error, either deterministic or stochastic.

One of our motivations for this additive perturbation model comes from optimization, where the gradient may be accessed only inaccurately, either because of physical, numerical or computational reasons. The prototype example we think of is the stochastic optimization problem where
\begin{equation}\label{eq:minExpecobj}
f(x) = \int_{\Xi} F(x,\xi)d\mu(\xi) =: \EE_\xi[F(x,\xi)] ,\,F:\Hc\times \Xi\to \RR
\end{equation}
wherein $\xi \sim \mu$ is a random variable taking values in $\Xi$, $F(x,\cdot)$ is $\mu$-integrable for any $x \in \Hc$, and $F(\cdot,\xi) \in C^x(\Hc)$ for any $\xi$. Stochastic optimization is a prominent
and active field of research for solving problems arising in many applications including machine learning and signal processing. Computing $\nabla f(x)$ (or even $f(x)$) is either impossible or computationally very expensive. Rather, one draws $m$ independent samples of $\xi$, say $(\xi_i)_{1 \leq i \leq m}$, and compute the average estimate
\begin{equation}\label{eq:deff}
\widehat{\nabla f}(x) = \frac{1}{N} \sum_{i=1}^N \nabla F(x,\xi_i) .
\end{equation}
In our setting, the error at iteration $k$ of an algorithm based on the first-order information $\nabla f(x_k)$ is then $e_k = \nabla f(x_k) - \widehat{\nabla f}(x_k)$. Observe that conditioned on $x_k$, and by independent sampling, $e_k$ has indeed zero-mean and its variance is the square of the standard error associated with a sample average, which scales as $\mathcal{O}(1/N)$. 
Thus, to make this variance verify appropriate summability assumptions in $k$, that will be made clear in our analysis, one has to take $N$ increase fast enough with $k$ (see our theoretical results as well as the numerical section for a detailed example). Our analysis will also reveal that under the weaker condition that $\|e_k\|$ vanishes (in expectation), we will establish convergence (and rate of convergence) of the objective values to a ``noise-dominated region" around the optimal value, and we will characterize precisely the size of this region. Of course, summability is stronger. However, one has to keep in mind that our goal is to establish additional convergence guarantees, including fast convergence of the gradient and $o(\cdot)$ rates. Deriving such guarantees is more challenging and cannot be proved under a mere smallness condition on the error.

\subsection{Previous work}
\paragraph{Exact inertial dynamics and algorithms}
Damped inertial dynamics have a natural mechanical and physical interpretation. Asymptotically, they tend to stabilize the system at a minimizer of the global energy function. As such, they offer an intuitive way to develop fast optimization methods. In \cite{polyak1964some}, B. Polyak initiated the use of inertial dynamics to accelerate the gradient method in optimization based on a second order in time inertial dynamical system with a fixed viscous damping coefficient; the so-called Heavy Ball with Friction (HBF) method. (HBF) provided momentum to the dynamics that accelerated the convergence for strongly convex objectives. In 1983, Nesterov introduced a momentum method, known in the literature as Nesterov accelerated gradient (NAG for short) \cite{nesterov1983method}. The Ravine method (RAG for short) was introduced by Gelfand and Tsetlin in 1961 \cite{GelfandRavine}. For a long time, the RAG and NAG algorithms were confused with each other and only recently, connections between the two methods were brought to the forefront in \cite{AF-Ravine-SIOPT}.

To obtain a continuous ODE surrogate of the NAG algorithm, a crucial step was taken by Su-Boyd-Cand\`es in 2016 \cite{su2014differential}. They introduced an asymptotic vanishing damping coefficient of the form $ \frac{\alpha}{t}$, where $\alpha > 0$ and $t > 0$ represents the time variable in the inertial system. In particular, for a general $L$-smooth convex function $f$, the condition $\alpha > 3$ guarantees the asymptotic convergence rate of the values with a rate of $o \left( 1/t^2 \right)$, as well as the weak convergence of each trajectory toward an optimal solution \cite{ACPR}. Results in the algorithmic case can be found in \cite{adly2024complexity}. In recent years, there has been an in-depth study linking the NAG method to inertial dynamics with vanishing viscous damping; see \cite{AF-Ravine-SIOPT,adly2024accelerated,AAF,polyak2020accelerated,attouch2024fast,CEG2,AAD,ACR-subcrit}

The above inertial systems may suffer from transverse oscillations, and it is desirable to dampen them. This is precisely the motivation behind the introduction of geometric Hessian-driven damping in \cite{ACFR}.
Several recent studies have been devoted to inertial dynamics that combine asymptotic vanishing damping with geometric Hessian-driven damping (sometimes called Newton-type inertial dynamics). See, for example, \cite{APR,APR1,ACFR,SDJS}. For instance, the inertial system 
\begin{equation}\label{eq:isehd}
\ddot{x}(t) + \displaystyle{\frac{\alpha}{t}}\dot{x}(t) +  \beta(t) \frac{d}{dt}\nabla  f (x(t)) + \nabla  f (x(t)) = 0,
\end{equation}
was introduced in \cite{APR} and studied in detail in~\cite{ACFR}. It was found to have favorable fast convergence properties in both the objective values as well as the gradients.

In \cite{ACFR,ACFRiterates}, the IGAHD algorithm, as a gradient-type discretization of \eqref{eq:isehd} was introduced and its convergence properties were analyzed, including the reduction of the oscillatory behaviour. A detailed comparative study of the IGAHD, NAG, and RAG methods was reported in \cite{AAF}.



\paragraph{Inertial dynamics and algorithms with deterministic and stochastic perturbations}
Due to the importance of the subject in optimization and control, several papers have been devoted to the study of perturbations in dissipative inertial systems and in the corresponding accelerated first-order algorithms. This was first considered in the case of a fixed viscous damping, \cite{acz,HJ1}. Then it was studied within the framework of the NAG, and of the corresponding inertial dynamics with vanishing viscous damping, see \cite{SDJS,AC2R-JOTA,ACPR,AD15,SLB,VSBV}. In the presence of an additional Hessian driven damping, first results have been obtained in \cite{APR1,ACFR,ACFRiterates} in the case of a smooth function. In~\cite{attouch2021effect}, the dynamical system \eqref{eq:isehd} with (integrable) errors was considered, and it was found that the favorable convergence properties of this method was robust, i.e., the rates were maintained. This form of
analysis, of considering the asymptotic dynamics of an ODE using Lyapunov theory, is insightful for the understanding and development
of optimization algorithms. These results were extended to stochastic continuous time inertial dynamics in \cite{maulen2024stochastic, maulen2024inertial}

Stochastic gradient descent methods with inertia/momentum are widely used in optimization subroutines in many applications. These algorithms are the subject of an active research work to understand their convergence behavior and were studied in several works, focusing exclusively on stochastic versions of the HBF and NAG/RAG methods; see
\cite{lan2012optimal,Lin2015,frostig2015regularizing,ghadimi2016mini, Gupta24,kidambi2018insufficiency,AR,AZ,Yang2016,gadat2018stochastic, loizou2020momentum, Laborde2020,LanBook,Sebbouh2021,Driggs22,AFK24}.


\subsection{Contributions and relation to prior work}
In this work, we analyze a perturbed version, in the deterministic and stochastic settings, of the IGAHD algorithm proposed in \cite{ACFR} 
for solving infinite-dimensional optimization problems of the form \eqref{eq:mainprob} over a real Hilbert space. We are not aware of any work handling perturbations for inertial algorithms featuring both viscous and Hessian driven damping terms. Our work appears to be the first in this direction. This paves the way of using the stochastic version of such an algorithm for optimization problems of the form \eqref{eq:minExpecobj}. Our contributions can summarized as follows:
\begin{itemize}
\item Deterministic errors: we show that under appropriate summability assumptions on the perturbation, our algorithm enjoys fast convergence of the objective values, of the gradients and weak convergence of the iterates toward a minimizer of the objective. This extends to the algorithmic case our previous work in \cite{attouch2021effect} on the deterministic perturbed version of \eqref{eq:isehd}.
\item Stochastic errors: assuming the perturbation is a random variable that is zero-mean, we can assume relatively weak conditions on the error variance and provide fast convergence of the values both in expectation and almost surely. We also improve the convergence rates from $\mathcal{O}(\cdot)$ to $o(\cdot)$ in expectation and almost sure sense. We also prove almost sure summability property of the gradients, which implies the almost sure fast convergence of the gradients towards zero. 
\item Speed-accuracy tradeoff: throughout, we will highlight and discuss the tradeoff between fast convergence, the choice of the algorithm parameters, and the required properties on the sequence of errors in the gradient computations.
\end{itemize}

\subsection{Paper organization}
The rest of the paper is organized as follows. In Section~\ref{s:conv} we
first introduce the inexact IGAHD algorithm. We then focus on deterministic errors/noise and carry out a Lyapunov analysis before deriving its convergence results (rates and weak convergence) under appropriate summability assumptions on the errors. We also characterize the corresponding noise-dominated region. Section~\ref{s:stoch} will be devoted to the case of stochastic errors/noise. The latter is not a mere extension of the results in Section~\ref{s:conv} and requires new and different arguments. Most prominently, the estimates of the gradients will not be independent and this has to be addressed very carefully.
Section~\ref{s:num} reports some numerical results to support our theoretical findings before finally drawing some general insights and conclusions in Section~\ref{s:conc}.




\section{Deterministic Errors}\label{s:conv}
We begin by considering the case of deterministic errors and derive convergence rates in the case of their integrability. In particular, algorithmically we consider that every attempted evaluation of $\nabla f(x)$ is subject to an error. The main steps are summarized in Algorithm~\ref{alg:hdde}.


\begin{algorithm}[H]
\caption{Inexact Inertial Gradient Algorithm with Hessian Driven Damping (I-IGAHD)}\label{alg:hdde}
\textbf{Input:} Initial point $x_1=x_{0}\in\Hc$, step-sizes $s_k \in ]0,1/L]$, parameters $(\alpha,\beta_k)$ satisfying $\alpha \geq 3$, $0 \leq \beta_k <2\sqrt{s_k}$.
\begin{algorithmic}[1]
\For{$k=1,2,...$}
\State Set $\alpha_k=1-\frac{\alpha}{k}$. 
\State Set,
\begin{equation}\label{eq:genalgstepy}
y_k = x_k+\alpha_k(x_k-x_{k-1})-\beta_k \sqrt{s_k}\left(\nabla f(x_k)+M^x_k\right)+\beta_{k-1} \sqrt{s_{k-1}}\left(1-\frac{1}{k}\right)\left(\nabla f(x_{k-1})+M^x_{k-1}\right) .
\end{equation}
\State Set,
\begin{equation}\label{eq:genalgstepx}
x_{k+1}=y_k- s_k (\nabla f(y_k)+M^y_k) .
\end{equation}
\EndFor
\end{algorithmic}
\end{algorithm}

\subsection{Main estimate}
We now proceed to derive a Lyapunov analysis for Algorithm~\ref{alg:hdde}. To this end, let $x^\star \in S$ and define the energy function
\[
E_{k} := s_k t_k^2 (f(x_k)-\min_{\Hc} f)+\frac{1}{2}\|v_k\|^2-\sum\limits_{\kappa=k}^\infty \langle v_{\kappa+1}, M_{\kappa}\rangle , \qquad \forall k \geq 1 ,
\]
where
\begin{align*}
t_{k+1} &:=\frac{k}{\alpha-1}\\
v_k &:= (x_{k-1}-x^\star)+t_k(x_k-x_{k-1}+\beta_{k-1}\sqrt{s_{k-1}}\nabla f(x_{k-1})) \\
M_k &:= \left(\beta_k\sqrt{s_{k}}t_{k+1}M^x_k - \beta_{k-1}\sqrt{s_{k-1}}t_k M^x_{k-1}\right) + s_kt_{k+1}M^y_k .
\end{align*}
We will take $\alpha_0=t_0=0$. We have the useful properties that we will used repeatedly:
\begin{equation} \label{eq:k1}
\begin{aligned}
    t_{k+1}^2 - t_{k}^2 \leq t_{k+1}, \qquad \text{and} \qquad t_{k+1}\alpha_k = t_k - 1 .
\end{aligned}
\end{equation}

We will also use the shorthand notation
\begin{equation}\label{eq:f_k_def}
    \begin{aligned}
       f_k(x) := f(x)+\langle M^y_k,x\rangle .        
    \end{aligned}
\end{equation}

\begin{lemma}\label{lem:lyaperr}
Let $f$ satisfy \eqref{eq:mainassump} and $(x_k)_{k \in \NN}$ be a sequence generated by Algorithm~\ref{alg:hdde}, with $\alpha\ge 3$, $0\leq\beta_k\leq\frac{\sqrt{s_{k}}}{2}$ and $s_{k} \in ]0,1/L]$ is nonincreasing.
Then $\exists k_0 \geq 1$ large enough and a sequence $(\epsilon_k)_{k \in \NN}$ obeying $\epsilon_k \in [0,\beta_k(2\sqrt{s_{k}} - \beta_k)]$ such that for all $k \geq k_0$, we have
\begin{multline}\label{eq:genrecursion}
E_{k+1} - E_{k} 
\leq s_k\left(t_{k+1}^2 - t_{k+1} - t_{k}^2 \right)\left( f(x_k) - \min_{\Hc} f\right)
- \frac{\epsilon_k}{4}s_kt_{k+1}^2 \left\|\nabla f(y_k)\right\|^2 \\
+ \frac{s_k^2}{2}t_{k+1}^2\left\|M^y_k\right\|^2 
+ \beta_k^2s_kt_{k+1}^2\left(\frac{2s_k}{\epsilon_k} + 1 \right)\left\|M^x_k\right\|^2 
+ \beta_{k-1}^2s_{k-1}t_{k}^2\left(\frac{2s_k}{\epsilon_k} + 1 \right)\left\|M^x_{k-1}\right\|^2 .
\end{multline}
\end{lemma}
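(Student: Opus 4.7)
My plan is to carry out the standard Lyapunov analysis for accelerated inertial gradient schemes, adapted to accommodate both the Hessian driven damping and the additive perturbations $M^x_k,M^y_k$. First, I would substitute the updates \eqref{eq:genalgstepy}--\eqref{eq:genalgstepx} into the definition of $v_{k+1}$ and apply the identities $t_{k+1}\alpha_k=t_k-1$ from \eqref{eq:k1} together with $t_{k+1}(1-1/k)=t_k$ (both immediate from $t_{k+1}=k/(\alpha-1)$) to verify that, after the $(x_k-x_{k-1})$, $\nabla f(x_k)$ and $\nabla f(x_{k-1})$ contributions cancel,
\[
v_{k+1}-v_k=-M_k-s_k t_{k+1}\nabla f(y_k).
\]
Using $\tfrac12\|v_{k+1}\|^2-\tfrac12\|v_k\|^2=\langle v_{k+1},v_{k+1}-v_k\rangle-\tfrac12\|v_{k+1}-v_k\|^2$, the cross term $-\langle v_{k+1},M_k\rangle$ is then exactly canceled by the $+\langle v_{k+1},M_k\rangle$ produced by telescoping the tail $-\sum_{\kappa\ge k}\langle v_{\kappa+1},M_\kappa\rangle$ inside $E_k$ (this cancellation being the very purpose of that correction term in the Lyapunov function), leaving
\[
E_{k+1}-E_k = s_{k+1}t_{k+1}^2(f(x_{k+1})-\min_\Hc f)-s_k t_k^2(f(x_k)-\min_\Hc f)-s_k t_{k+1}\langle v_{k+1},\nabla f(y_k)\rangle-\tfrac12\|M_k+s_k t_{k+1}\nabla f(y_k)\|^2.
\]

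Next, substituting $x_{k+1}=y_k-s_k(\nabla f(y_k)+M^y_k)$ into $v_{k+1}$ and using the identity $(x_k-x^\star)+t_{k+1}(y_k-x_k)=t_{k+1}(y_k-x^\star)-(t_{k+1}-1)(x_k-x^\star)$ gives
\[
v_{k+1}=t_{k+1}(y_k-x^\star)-(t_{k+1}-1)(x_k-x^\star)-s_k t_{k+1}\nabla f(y_k)-s_k t_{k+1}M^y_k+\beta_k\sqrt{s_k}\,t_{k+1}\nabla f(x_k).
\]
Choosing $k_0$ so that $t_{k+1}\ge 1$ for $k\ge k_0$ (i.e.\ $k_0\ge\alpha-1$), convexity of $f$ at the legitimate convex combination $\tfrac{1}{t_{k+1}}x^\star+(1-\tfrac{1}{t_{k+1}})x_k$ provides the lower bound
\[
\langle\nabla f(y_k),t_{k+1}(y_k-x^\star)-(t_{k+1}-1)(x_k-x^\star)\rangle\ge t_{k+1}(f(y_k)-\min_\Hc f)-(t_{k+1}-1)(f(x_k)-\min_\Hc f),
\]
while the descent lemma applied to $f_k=f+\langle M^y_k,\cdot\rangle$ (whose gradient is still $L$-Lipschitz) under $s_k\le 1/L$ yields $f(x_{k+1})\le f(y_k)-\tfrac{s_k}{2}\|\nabla f(y_k)\|^2+\tfrac{s_k}{2}\|M^y_k\|^2$. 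Plugging both bounds in, and using $s_{k+1}\le s_k$ together with $f(x_{k+1})\ge\min_\Hc f$, the $\tfrac{s_k^2 t_{k+1}^2}{2}\|\nabla f(y_k)\|^2$ and the $\langle M^y_k,\nabla f(y_k)\rangle$ contributions each cancel pairwise against their counterparts produced when $-\tfrac12\|M_k+s_k t_{k+1}\nabla f(y_k)\|^2$ is expanded; dropping the nonpositive $-\tfrac12\|M_k\|^2$ leaves an intermediate estimate whose only non-leading contributions beyond $\tfrac{s_k^2 t_{k+1}^2}{2}\|M^y_k\|^2$ are the Hessian cross term $-\beta_k s_k^{3/2}t_{k+1}^2\langle\nabla f(x_k),\nabla f(y_k)\rangle$ and the two error cross terms $-\beta_k s_k^{3/2}t_{k+1}^2\langle M^x_k,\nabla f(y_k)\rangle+\beta_{k-1}\sqrt{s_{k-1}}s_k t_{k+1}t_k\langle M^x_{k-1},\nabla f(y_k)\rangle$.

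Finally, I would extract the claimed $-\tfrac{\epsilon_k}{4}s_k t_{k+1}^2\|\nabla f(y_k)\|^2$ dissipation by decomposing $-\langle\nabla f(x_k),\nabla f(y_k)\rangle=-\|\nabla f(y_k)\|^2+\langle\nabla f(y_k)-\nabla f(x_k),\nabla f(y_k)\rangle$ and absorbing the residual inner product via Young's inequality combined with the $L$-Lipschitz bound on $\nabla f$ (equivalently Baillon--Haddad co-coercivity) and $Ls_k\le 1$. The hypothesis $\beta_k\le\sqrt{s_k}/2$ is exactly what ensures that, after this absorption, the remaining coefficient on $\|\nabla f(y_k)\|^2$ can be set to $-\tfrac{\epsilon_k}{4}s_k t_{k+1}^2$ for any $\epsilon_k$ in the admissible range $[0,\beta_k(2\sqrt{s_k}-\beta_k)]$. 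The two error cross terms are each dispatched by Young's $|\langle u,v\rangle|\le\tfrac{1}{2\eta}\|u\|^2+\tfrac{\eta}{2}\|v\|^2$ with a common parameter $\eta$ proportional to $s_k t_{k+1}^2\epsilon_k$, producing precisely the stated coefficients $\beta_k^2 s_k t_{k+1}^2(2s_k/\epsilon_k+1)$ on $\|M^x_k\|^2$ and $\beta_{k-1}^2 s_{k-1}t_k^2(2s_k/\epsilon_k+1)$ on $\|M^x_{k-1}\|^2$ (the ``$+1$'' contribution arising from the residual Lipschitz/Young absorption in the Hessian cross term together with the inequality $t_k\le t_{k+1}$, which holds for $k\ge k_0$). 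The main obstacle will be the meticulous parameter bookkeeping in this final step, ensuring simultaneously that the net $\|\nabla f(y_k)\|^2$ dissipation matches exactly $-\tfrac{\epsilon_k}{4}s_k t_{k+1}^2$, that the error coefficients take the exact stated form, and that the admissibility window $\epsilon_k\in[0,\beta_k(2\sqrt{s_k}-\beta_k)]$ emerges precisely as the range over which all Young absorptions can be performed while preserving non-negativity of the net gradient dissipation.
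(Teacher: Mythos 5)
Your overall architecture (the Lyapunov function with the tail-sum correction, the computation $v_{k+1}-v_k=-s_kt_{k+1}\nabla f(y_k)-M_k$, the cancellation of $\langle v_{k+1},M_k\rangle$ against the telescoped tail, and the final Young/Jensen dispatch of the error cross terms) matches the paper. The genuine gap is in how you propose to generate the gradient dissipation. You replace the paper's use of the \emph{extended} descent lemma (Lemma~\ref{lemma:A1}) at the pair $(y_k,x_k)$ by plain convexity at $y_k$ plus the ordinary descent bound $f(x_{k+1})\le f(y_k)-\tfrac{s_k}{2}\|\nabla f(y_k)\|^2+\tfrac{s_k}{2}\|M^y_k\|^2$. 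After your cancellations the Hessian-damping cross term $-\beta_k\sqrt{s_k}\,s_kt_{k+1}^2\langle\nabla f(x_k),\nabla f(y_k)\rangle$ remains, and your plan is to write $-\langle X,Y\rangle=-\|Y\|^2+\langle Y-X,Y\rangle$ and absorb the residual via Young plus Lipschitz continuity (or Baillon--Haddad). This step fails: Young leaves a term of the form $\tfrac{\beta_k\sqrt{s_k}s_kt_{k+1}^2}{2\eta}\|\nabla f(y_k)-\nabla f(x_k)\|^2$, and Lipschitz continuity (or co-coercivity) only converts it into a term of order $k^2\|y_k-x_k\|^2$, i.e.\ essentially $k^2\|x_k-x_{k-1}\|^2$ plus noise. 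The target inequality \eqref{eq:genrecursion} contains no $\|\nabla f(x_k)\|^2$, no $\|\nabla f(x_k)-\nabla f(y_k)\|^2$, and no velocity term on its right-hand side, so there is nothing available to absorb this residual; the only dissipation you have, $-\beta_k\sqrt{s_k}s_kt_{k+1}^2\|\nabla f(y_k)\|^2$, can only eat the $\eta\|Y\|^2$ half of the Young split. Consequently the admissibility window $\epsilon_k\in[0,\beta_k(2\sqrt{s_k}-\beta_k)]$ cannot ``emerge'' from your bookkeeping, because the bookkeeping cannot be closed.

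The paper avoids this precisely by invoking Lemma~\ref{lemma:A1} (with its extra term $-\tfrac{s}{2}\|\nabla g(x)-\nabla g(y)\|^2$) applied to $f_k=f+\langle M^y_k,\cdot\rangle$ at the two pairs $(y_k,x_k)$ and $(y_k,x^\star)$; the first application injects $-\tfrac{s_k}{2}(t_{k+1}-1)t_{k+1}\|\nabla f(x_k)-\nabla f(y_k)\|^2$ into the estimate with weight of order $k^2$. Together with the $\|\nabla f(y_k)\|^2$ terms this produces the quadratic form $A_k$ in $X=\nabla f(x_k)$, $Y=\nabla f(y_k)$, whose nonnegativity for $k\ge k_0$ (a discriminant/Sylvester check, which is exactly where $k_0$ and the hypothesis $0\le\beta_k\le\sqrt{s_k}/2$ enter) gives the lower bound $A_k\ge\tfrac{\epsilon_k}{2}t_{k+1}\|\nabla f(y_k)\|^2$ for any $\epsilon_k\in[0,\beta_k(2\sqrt{s_k}-\beta_k)]$; the remainder of the proof is then the Young/Jensen step you describe. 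To repair your argument, replace your two scalar inequalities by these two applications of the extended descent lemma (equivalently, reproduce \eqref{eq:desclemmaa}--\eqref{eq:desclemmab}); the rest of your outline then goes through.
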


\noindent Before we prove the result, a few important observations are in order, that will be useful for the derivations in the sequel.
\begin{remark}\label{rem:lemma}
Observe that since $\alpha \geq 3$, we have $t_{k+1}^2 - t_{k+1} \leq t_{k}^2$, and thus the first term in the rhs of \eqref{eq:genrecursion} is negative. Moreover, if $\inf_{k} s_k > 0$, then one can take $0 < \beta_k \leq \sqrt{s_k}/2$, and, in turn, $\inf_k \epsilon_k > 0$. This remark is important in view of proving fast summability of the gradients.
\end{remark}




\begin{proof}
Applying Lemma~\ref{lemma:A1} to the function $f_k$ at $y=y_k$ and $x=x_k$, and then at $y=y_k$ and $x=x^\star$, and using $x_{k+1}=y_k-s_{k}\nabla f_k(y_k)$ and $\nabla f_k(x^\star)= M^y_k$, we get
\begin{align}
f_k(x_{k+1}) &\le f_k(x_k)+\langle \nabla f_k(y_k),y_k-x_k\rangle -\frac{s_{k}}{2}\|\nabla f_k(y_k)\|^2 -\frac{s_{k}}{2}\|\nabla f_k(x_k)-\nabla f_k(y_k)\|^2 \nonumber\\
&= f_k(x_k)+\langle \nabla f_k(y_k),y_k-x_k\rangle -\frac{s_{k}}{2}\|\nabla f_k(y_k)\|^2 -\frac{s_{k}}{2}\|\nabla f(x_k)-\nabla f(y_k)\|^2\label{eq:desclemmaa} \\
f_k(x_{k+1}) &\le f_k(x^\star)+\langle\nabla f_k(y_k),y_k-x^\star\rangle-\frac{s_{k}}{2}\|\nabla f_k(y_k)\|^2 -\frac{s_{k}}{2}\|\nabla f_k(y_k)-M^y_k\|^2 \nonumber\\
&\le f_k(x^\star)+\langle\nabla f_k(y_k),y_k-x^\star\rangle-\frac{s_{k}}{2}\|\nabla f_k(y_k)\|^2 -\frac{s_{k}}{2}\|\nabla f(y_k)\|^2.\label{eq:desclemmab}
\end{align}
where we used that $\nabla f_k(x) = \nabla f(x) + M^y_k$ for any $x$. Multiplying~\eqref{eq:desclemmaa} by $t_{k+1}-1\ge 0$ and adding~\eqref{eq:desclemmab}, we obtain
\begin{align*}
t_{k+1}(f_k(x_{k+1})-f_k(x^\star)) 
\le & (t_{k+1}-1)(f_k(x_k)-f_k(x^\star))\\ & +\langle \nabla f_k(y_k),(t_{k+1}-1)(y_k-x_k)+y_k-x^\star\rangle  \\ 
& -\frac{s_{k}}{2}t_{k+1} \|\nabla f_k(y_k)\|^2  - \frac{s_{k}}{2}(t_{k+1}-1) \|\nabla f(x_k)-\nabla f(y_k)\|^2 \\ 
& -\frac{s_{k}}{2}\left\|\nabla f(y_k)\right\|^2 \\
= & (t_{k+1}-1)(f_k(x_k)-f_k(x^\star))\\ & +\langle \nabla f_k(y_k),(t_{k+1}-1)(y_k-x_k)+y_k-x^\star\rangle  \\ 
& -\frac{s_{k}}{2}(t_{k+1}+1) \|\nabla f(y_k)\|^2  -\frac{s_{k}}{2}(t_{k+1}-1) \|\nabla f(x_k)-\nabla f(y_k)\|^2 \\ 
& -s_{k}t_{k+1}\langle \nabla f(y_k),M^y_k\rangle-\frac{s_{k}}{2}t_{k+1}\|M^y_k\|^2 .
\end{align*}
We then multiply both sides by $t_{k+1}$ to arrive at
\begin{equation}\label{eq:descUtil}
\begin{aligned}
    t_{k+1}^2 \left( f_k(x_{k+1}) - f_k(x^\star) \right) 
    &\leq  t_k^2 \left( f_k(x_k) - f_k(x^\star) \right)
    + (t_{k+1}^2 - t_{k+1} - t_k^2) \left( f_k(x_k) - f_k(x^\star) \right) \\
    &+ t_{k+1} \langle \nabla f_k(y_k), (t_{k+1} - 1)(y_k - x_k) + y_k - x^\star \rangle \\
    & - \frac{s_{k}}{2} t_{k+1} (t_{k+1} + 1) \| \nabla f(y_k) \|^2 \\
    & - \frac{s_{k}}{2} t_{k+1}(t_{k+1} - 1) \| \nabla f(x_k) - \nabla f(y_k) \|^2 \\
    & - s_{k} t_{k+1}^2 \langle \nabla f(y_k), M^y_k \rangle - \frac{s_{k} }{2}t_{k+1}^2 \| M^y_k \|^2 .
\end{aligned}
\end{equation}
Recalling the definition of $f_k$, plugging it in the last inequality and rearranging the terms (where we shifted all the error terms to the rightmost part ofthe expression) we get 
\begin{equation}\label{eq:stoch_util}
\begin{aligned}
t_{k+1}^{2}\left(f\left(x_{k+1}\right) - f\left(x^\star\right)\right) 
&\leq t_{k}^{2}\left(f\left(x_{k}\right) - f\left(x^\star\right)\right)
 + \left(t_{k+1}^{2} - t_{k+1} - t_{k}^{2}\right)\left(f\left(x_{k}\right) - f\left(x^\star\right)\right) \\
& + t_{k+1}\left\langle \nabla f(y_{k}), \left(t_{k+1} - 1\right)\left(y_{k} - x_{k}\right) + y_{k} - x^\star \right\rangle\\
& - \frac{s_{k}}{2} t_{k+1} \left(t_{k+1} + 1\right) \left\|\nabla f\left(y_{k}\right)\right\|^{2} \\
& - \frac{s_{k}}{2} t_{k+1} \left(t_{k+1} - 1\right) \left\|\nabla f\left(x_{k}\right) - \nabla f\left(y_{k}\right)\right\|^{2} \\
& - \left\langle M^y_k , t_{k+1}^{2}\left(x_{k+1} - x^\star\right) - \left(t_{k+1}^{2} - t_{k+1}\right)\left(x_{k} - x^\star\right) \right\rangle \\
& + t_{k+1}\left\langle M^y_k, \left(t_{k+1} - 1\right)\left(y_{k} - x_{k}\right) + y_{k} - x^\star \right\rangle\\
& - s_{k} t_{k+1}^2 \left\langle M^y_k , \nabla f\left(y_{k}\right)\right\rangle - \frac{s_{k}}{2} t_{k+1}^2\left\|M^y_k\right\|^{2} .
\end{aligned}
\end{equation}
Multiplying by $s_k$ on both sides of \eqref{eq:stoch_util}, using that $s_k$ is nonincreasing, we can plug this expression into $E_{k}$ to yield:
\begin{equation}\label{eq:v_k_inter}
\begin{aligned}
E_{k+1} - E_{k} 
\leq& s_k\left(t_{k+1}^2 - t_{k+1} - t_{k}^2 \right)\left( f(x_k) - \min_{\Hc} f\right) \\
& + s_kt_{k+1} \left\langle \nabla f\left(y_{k}\right), \left(t_{k+1} - 1\right)\left(y_{k} - x_{k}\right) + y_{k} - x^\star \right\rangle \\
& - \frac{s_{k}^2}{2} t_{k+1} \left(t_{k+1} + 1\right) \left\|\nabla f\left(y_{k}\right)\right\|^{2} - \frac{s_{k}^2}{2} t_{k+1} \left(t_{k+1} - 1\right) \left\|\nabla f\left(x_{k}\right) - \nabla f\left(y_{k}\right)\right\|^{2} \\
& + \frac{1}{2} \left\|v_{k+1}\right\|^{2} - \frac{1}{2} \left\|v_{k}\right\|^{2} \\ 
&- s_kt_{k+1}\left\langle M^y_k , t_{k+1}\left(x_{k+1} - x_{k}\right) + \left(x_{k} - x^\star\right)\right\rangle \\
& + s_kt_{k+1}\left\langle M^y_k, \left(t_{k+1} - 1\right)\left(y_{k} - x_{k}\right) + y_{k} - x^\star \right\rangle\\
& - s_{k}^2 t_{k+1}^2 \left\langle M^y_k , \nabla f\left(y_{k}\right) \right\rangle - \frac{s_{k}^2}{2}t_{k+1}^2 \left\|M^y_k\right\|^{2} + \left\langle M_{k} , v_{k+1} \right\rangle .
\end{aligned}
\end{equation}
By the well-known three point identity, we have
\[
\frac{1}{2}\left\|v_{k+1}\right\|^{2}-\frac{1}{2}\left\|v_{k}\right\|^{2}=\left\langle v_{k+1}-v_{k}, v_{k+1}\right\rangle-\frac{1}{2}\left\|v_{k+1}-v_{k}\right\|^{2}
\]
By definition of $v_k$, and given the recursion on $x_{k+1}$ and afterwards the expression for $y_k$ Algorithm~\ref{alg:hdde} and that $t_k-1= t_{k+1}\alpha_k$,  we have
\begin{align}\label{eq:vk+1vk}
v_{k+1} - v_{k}
&= x_{k} - x_{k-1} + t_{k+1}(x_{k+1} - x_{k} + \beta_k \sqrt{s_k} \nabla f( x_{k})) - t_k (x_{k} - x_{k-1} + \beta_{k-1} \sqrt{s_{k-1}}  \nabla f( x_{k-1})) \nonumber\\
&= t_{k+1}(x_{k+1} - x_{k}) -(t_k-1)  (x_{k} - x_{k-1}) 
+ \Big( \beta_k\sqrt{s_k} t_{k+1} \nabla f( x_{k})-  \beta_{k-1}\sqrt{s_{k-1}}t_{k}\nabla f( x_{k-1}) \Big)  \nonumber\\
&= t_{k+1}\Big(x_{k+1} - (x_{k}+\alpha_k  (x_{k} - x_{k-1})\Big) + \Big( \beta_k\sqrt{s_k} t_{k+1} \nabla f( x_{k})-  \beta_{k-1}\sqrt{s_{k-1}}t_{k}\nabla f( x_{k-1}) \Big) \nonumber\\
&= t_{k+1}\left(x_{k+1} - y_k\right) -\beta_k \sqrt{s_k}t_{k+1}\left(\nabla f(x_k)+M^x_k\right)+\beta_{k-1} \sqrt{s_{k-1}}t_{k+1}\left(1-\frac{1}{k}\right)\left(\nabla f(x_{k-1})+M^x_{k-1}\right) \nonumber\\
&+ \Big( \beta_{k}\sqrt{s_k} t_{k+1} \nabla f( x_{k})-  \beta_{k-1}\sqrt{s_{k-1}}t_{k}\nabla f( x_{k-1}) \Big) \nonumber\\
&= -s_kt_{k+1}\left(\nabla f(y_k) + M^y_k\right) + \beta_{k-1} \sqrt{s_{k-1}}\left(t_{k+1}\left(1 - \frac{1}{k} \right)-t_k\right)\nabla f(x_{k-1}) \nonumber\\
&-\beta_{k}\sqrt{s_{k}}t_{k+1}M^x_k + \beta_{k-1}\sqrt{s_{k-1}}t_{k+1}\left(1-\frac{1}{k}\right)M^x_{k-1} \nonumber\\
&= -s_kt_{k+1}\nabla f(y_k) - \left(\beta_{k}\sqrt{s_{k}}t_{k+1}M^x_k - \beta_{k-1}\sqrt{s_{k-1}}t_k M^x_{k-1}\right) - s_kt_{k+1}M^y_k  \nonumber\\
&= -s_kt_{k+1}\nabla f(y_k) - M_k .
\end{align}
This expression can now be substituted into the previous equation to obtain
\begin{align*}
\frac{1}{2}\left\|v_{k+1}\right\|^{2} - \frac{1}{2}\left\|v_{k}\right\|^{2}
=& -\frac{1}{2}\left\|s_kt_{k+1}\nabla f(y_k)+M_k\right\|^{2} \\
&- \left\langle s_kt_{k+1}\nabla f(y_k) + M_k, (x_{k}-x^\star)+t_{k+1}(x_{k+1}-x_{k}+\beta_{k}\sqrt{s_{k}}\nabla f(x_{k})) \right\rangle \\
=& -\frac{s_k^2t_{k+1}^2}{2}\left\|\nabla f(y_k)\right\|^{2} - s_kt_{k+1}\langle\nabla f(y_k),M_k\rangle - \frac{1}{2}\left\|M_k\right\|^2 \\
& - \left\langle s_kt_{k+1}\nabla f(y_k) + M_k, (x_{k}-x^\star)+t_{k+1}(x_{k+1}-x_{k}+\beta_{k}\sqrt{s_{k}}\nabla f(x_{k})) \right\rangle .
\end{align*}
Plugging this into \eqref{eq:v_k_inter}, we obtain
\begin{equation}\label{eq:a_k_inter}
\begin{aligned}
E_{k+1} - E_{k} 
\leq& s_k\left(t_{k+1}^2 - t_{k+1} - t_{k}^2 \right)\left( f(x_k) - \min_{\Hc} f\right) \\
& + s_kt_{k+1} \left\langle \nabla f\left(y_{k}\right), \left(t_{k+1} - 1\right)\left(y_{k} - x_{k}\right) + y_{k} - x^\star \right\rangle \\
& - \frac{s_{k}^2}{2} t_{k+1} \left(2t_{k+1} + 1\right) \left\|\nabla f\left(y_{k}\right)\right\|^{2} - \frac{s_{k}^2}{2} t_{k+1} \left(t_{k+1} - 1\right) \left\|\nabla f\left(x_{k}\right) - \nabla f\left(y_{k}\right)\right\|^{2} \\
& - s_kt_{k+1} \left\langle \nabla f(y_k) , (x_{k}-x^\star)+t_{k+1}(x_{k+1}-x_{k}+\beta_{k}\sqrt{s_{k}}\nabla f(x_{k})) \right\rangle \\ 
&- s_kt_{k+1}\langle M_k , \nabla f(y_k)\rangle - \frac{1}{2}\left\|M_k\right\|^2 - \left\langle M_k , v_{k+1} \right\rangle \\
&- s_kt_{k+1}\left\langle M^y_k , t_{k+1}\left(x_{k+1} - x_{k}\right) + \left(x_{k} - x^\star\right)\right\rangle \\
& + s_kt_{k+1}\left\langle M^y_k, \left(t_{k+1} - 1\right)\left(y_{k} - x_{k}\right) + y_{k} - x^\star \right\rangle\\
& - s_{k}^2 t_{k+1}^2 \left\langle M^y_k , \nabla f\left(y_{k}\right) \right\rangle - \frac{s_{k}^2}{2}t_{k+1}^2 \left\|M^y_k\right\|^{2} + \left\langle M_{k} , v_{k+1} \right\rangle .
\end{aligned}
\end{equation}
Observe that
\begin{align*}
&(t_{k+1} - 1)(y_k - x_k) + y_k - x_k - t_{k+1}\left(x_{k+1} - x_k + \beta_{k} \sqrt{s_{k}} \nabla f(x_k)\right) \\
&= t_{k+1} (y_k - x_k) - t_{k+1}\left(x_{k+1} - x_k\right) - t_{k+1} \beta_{k} \sqrt{s_{k}} \nabla f(x_k) \\
&= t_{k+1}(y_k - x_{k+1}) - t_{k+1} \beta_{k} \sqrt{s_{k}} \nabla f(x_k) \\
&= s_{k} t_{k+1}\left(\nabla f(y_k) + M^y_k\right) - t_{k+1} \beta_{k} \sqrt{s_{k}} \nabla f(x_k) ,
\end{align*}
and 
\begin{align*}
&(t_{k+1} - 1)(y_k - x_k) + (y_k - x^\star) - t_{k+1}(x_{k+1} - x_k) - (x_k - x^\star) \\
&= t_{k+1}(y_k - x_{k+1}) \\
&= s_{k} t_{k+1}\left(\nabla f(y_k) + M^y_k\right) .
\end{align*}
Thus \eqref{eq:a_k_inter} equivalently reads
\begin{align*}
E_{k+1} - E_{k} 
\leq& s_k\left(t_{k+1}^2 - t_{k+1} - t_{k}^2 \right)\left( f(x_k) - \min_{\Hc} f\right) \\
& + s_kt_{k+1} \left\langle \nabla f\left(y_{k}\right), s_{k} t_{k+1}\left(\nabla f(y_k) + M^y_k\right) - t_{k+1} \beta_k \sqrt{s_{k}} \nabla f(x_k) \right\rangle \\
& - \frac{s_{k}^2}{2} t_{k+1} \left(2t_{k+1} + 1\right) \left\|\nabla f\left(y_{k}\right)\right\|^{2} - \frac{s_{k}^2}{2} t_{k+1} \left(t_{k+1} - 1\right) \left\|\nabla f\left(x_{k}\right) - \nabla f\left(y_{k}\right)\right\|^{2} \\
&- s_kt_{k+1}\langle M_k , \nabla f(y_k)\rangle - \frac{1}{2}\left\|M_k\right\|^2 - \left\langle M_k , v_{k+1} \right\rangle \\
& + s_kt_{k+1}\left\langle M^y_k, s_{k} t_{k+1}\left(\nabla f(y_k) + M^y_k\right) \right\rangle\\
& - s_{k}^2 t_{k+1}^2 \left\langle M^y_k , \nabla f\left(y_{k}\right) \right\rangle - \frac{s_{k}^2}{2}t_{k+1}^2 \left\|M^y_k\right\|^{2} + \left\langle M_{k} , v_{k+1} \right\rangle .
\end{align*}
Simplifying, we get
\begin{equation}\label{eq:lyapuUtil}
\begin{aligned}
E_{k+1} - E_{k} 
&\leq s_k\left(t_{k+1}^2 - t_{k+1} - t_{k}^2 \right)\left( f(x_k) - \min_{\Hc} f\right)
- s_kt_{k+1} A_k \\
&- s_kt_{k+1}\langle M_k , \nabla f(y_k)\rangle - \frac{1}{2}\left\|M_k\right\|^2 \\
&+ s_k^2t_{k+1}^2\left\langle M^y_k , \nabla f(y_k) \right\rangle 
+ \frac{s_k^2}{2}t_{k+1}^2\left\| M^y_k \right\|^2 ,
\end{aligned}
\end{equation}
where
\[
A_k := t_{k+1}\beta_k\sqrt{s_k}\left\langle\nabla f(y_k),\nabla f(x_k)\right\rangle + \frac{s_{k}}{2} \left\|\nabla f\left(y_{k}\right)\right\|^{2} + \frac{s_{k}}{2} \left(t_{k+1} - 1\right) \left\|\nabla f\left(x_{k}\right) - \nabla f\left(y_{k}\right)\right\|^{2} .
\]
Replacing $M_k$ by its expression, developing and ignoring the negative terms, we arrive at
\begin{align*}
E_{k+1} - E_{k} 
&\leq s_k\left(t_{k+1}^2 - t_{k+1} - t_{k}^2 \right)\left( f(x_k) - \min_{\Hc} f\right)
- s_kt_{k+1} A_k \\
&- s_kt_{k+1}\left\langle  \left(\beta_{k}\sqrt{s_{k}}t_{k+1}M^x_k - \beta_{k-1}\sqrt{s_{k-1}}t_k M^x_{k-1}\right), \nabla f(y_k) \right\rangle \\
&- s_kt_{k+1}\left\langle  \left(\beta_{k}\sqrt{s_{k}}t_{k+1}M^x_k - \beta_{k-1}\sqrt{s_{k-1}}t_k M^x_{k-1}\right), M^y_k \right\rangle .
\end{align*}
Let us show that $A_k$ is nonnegative. Set $X=\nabla f\left(x_{k}\right)$ and  $Y=\nabla f\left(y_{k}\right)$. Thus 
\[
A_k = \frac{s_{k}}{2}(t_{k+1}-1) \left\|X\right\|^{2} + \frac{s_{k}}{2}t_{k+1} \left\|Y\right\|^{2} + \left(t_{k+1} \left(\beta_k\sqrt{s_{k}} - s_{k}\right) + s_{k} \right)\left\langle X,Y\right\rangle
\]
which is a quadratic form of $X$ and $Y$. For $\beta_k \equiv 0$, $A_k$ is obviously nonnegative. Let us now focus on $\beta_k \geq 0$. Since $t_k$ is of order $k$, Sylvester's criterion shows that for any $k \geq k_0$ with $k_0$ large enough (that depends on $\alpha$), $A_k$ is nonnegative if and only if
\[
\left(t_{k+1} \left(\beta_k\sqrt{s_{k}} - s_{k}\right) + s_{k} \right)^2 \leq s_{k}^2t_{k+1}\left(t_{k+1} - 1\right) ,
\]
or equivalently
\[
t_{k+1}^2\left((1-\beta_k/\sqrt{s_k})^2-1\right) \leq t_{k+1}\left(2(1-\beta_k/\sqrt{s_k}) -1 \right) - 1 .
\]
Using again that $t_k$ is of order $k$, a sufficient condition for this inequality to be satisfied for any $k \geq k_0$ (enlarging $k_0$ if necessary) is that 
\[
\left(s_k - \beta_k \sqrt{s_{k}}\right)^2 \leq s_{k}^2 \quad \text{and} \quad 2(1-\beta_k/\sqrt{s_k}) - 1 \geq 0 .
\]
Both hold true if $0 \leq \beta_k \leq \sqrt{s_{k}}/2$ as devised. Similar arguments entail that for $0\leq\epsilon_k \leq \beta_k(2\sqrt{s_{k}} - \beta_k) \leq s_k$ (such $\epsilon_k$ exists according to assumption $0 \leq \beta_k \leq \sqrt{s_{k}}/2$), we have
\[
A_k \geq \frac{\epsilon_k}{2} t_{k+1}\left\|\nabla f(y_k)\right\|^2 .
\]
Therefore
\begin{equation}\label{eq:lyapmainineq}
\begin{aligned}
E_{k+1} - E_{k} 
&\leq s_k\left(t_{k+1}^2 - t_{k+1} - t_{k}^2 \right)\left( f(x_k) - \min_{\Hc} f\right)
- \frac{\epsilon_k}{2}s_kt_{k+1}^2 \left\|\nabla f(y_k)\right\|^2 \\
&- s_kt_{k+1}\left\langle  \left(\beta_k\sqrt{s_{k}}t_{k+1}M^x_k - \beta_{k-1}\sqrt{s_{k-1}}t_k M^x_{k-1}\right), \nabla f(y_k) \right\rangle \\
&- s_kt_{k+1}\left\langle  \left(\beta_k\sqrt{s_{k}}t_{k+1}M^x_k - \beta_{k-1}\sqrt{s_{k-1}}t_k M^x_{k-1}\right), M^y_k \right\rangle .
\end{aligned}
\end{equation}
Applying Young's and Jensen's inequalities the last two terms then yields \eqref{eq:genrecursion}.
\end{proof}

\subsection{Convergence guarantees}


The main results of this section are summarized in the following theorem. 
\begin{theorem}\label{thm:vals_thm}
Let $f$ satisfy \eqref{eq:mainassump} and $(x_k)_{k \in \NN}$ be a sequence generated by Algorithm~\ref{alg:hdde}, with $\alpha\ge 3$, $\beta_k=\eta\frac{\sqrt{s_{k}}}{2}$, $\eta \in ]0,1]$, and $s_{k} \in ]0,1/L]$ is nonincreasing. Assume that
\begin{equation}
\sum_{k \in \NN} s_k k\|M^x_k\| < +\infty \quad \text{and} \quad \sum_{k \in \NN} s_k k\|M^y_k\| < +\infty . \quad \tag{$J_0$} \label{eq:J0}
\end{equation}
Then the following convergence rates hold on objective values and gradients:
\begin{enumerate}[label=(\roman*)]
    \item $f\left(x_{k}\right)-\min_{\Hc} f=\mathcal{O}\left(\frac{1}{s_k k^{2}}\right) \text{ as } k \rightarrow \infty$.\label{item:val_first_det}
    
    \item We have
    \[
    \sum_{k \in \NN} s_k k^2 \|\nabla f(y_k)\|^2 < +\infty .
    \]
    If $\inf_{k \in \NN} s_k > 0$, then
    \[
    \sum_{k \in \NN} k^2 \|\nabla f(x_k)\|^2 < +\infty \quad \text{and} \quad \sum_{k \in \NN} k^2 \|\nabla f(y_k)\|^2 < +\infty .
    \]
    \label{item:val_second_det}

    Suppose now that $\alpha > 3$. Then
    \item $\sum_{k \in \NN} s_k k(f\left(x_{k}\right)-\min_{\Hc} f) < +\infty$.\label{item:sumf}
    \item If $s_k \equiv s > 0$, then
    \label{item:vel}
    \begin{enumerate}
    \item $\sup _{k \in \NN} k\left\|x_{k}-x_{k-1}\right\|<+\infty$\label{item:vel_first}
    \item $\sum_{k \in \NN} k\left\|x_{k}-x_{k-1}\right\|^{2}<+\infty$\label{item:vel_second}.
    \item $\left(x_{k}\right)_{k \in \NN}$ converges weakly to a minimizer of $f$ \label{item:iter}.
    \end{enumerate}
\end{enumerate}
\end{theorem}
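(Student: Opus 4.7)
The plan is to telescope the Lyapunov recursion \eqref{eq:genrecursion}. With $\beta_k = \eta\sqrt{s_k}/2$, I would pick $\epsilon_k \asymp s_k$, which reduces the residual on the right of \eqref{eq:genrecursion} to $R_k = O(s_k^2 k^2(\|M^x_k\|^2 + \|M^x_{k-1}\|^2 + \|M^y_k\|^2))$. Under \eqref{eq:J0}, the sequence $s_k k(\|M^x_k\| + \|M^y_k\|)$ is not only summable but bounded, so $\sqrt{R_k}$ is summable, and similarly $\|M_k\| = O(s_k k(\|M^x_k\| + \|M^x_{k-1}\| + \|M^y_k\|))$ is summable. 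Since $\alpha \geq 3$ makes $t_{k+1}^2 - t_{k+1} - t_k^2 \leq 0$, summing \eqref{eq:genrecursion} yields $E_k \leq E_{k_0} + \sum_j R_j < \infty$. To convert this into a bound on the auxiliary energy $\widetilde E_k := s_k t_k^2(f(x_k) - \min_\Hc f) + \frac{1}{2}\|v_k\|^2 = E_k + \sum_{\kappa \geq k}\langle v_{\kappa+1}, M_\kappa\rangle$, I would observe that $\widetilde E_{k+1} - \widetilde E_k \leq R_k + \|v_{k+1}\|\|M_k\| \leq R_k + \sqrt{2\widetilde E_{k+1}}\|M_k\|$, solve this quadratic for $\sqrt{\widetilde E_{k+1}}$, and telescope to obtain $\sup_k \widetilde E_k < \infty$.

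Assertion \ref{item:val_first_det} is then immediate from $s_k t_k^2 (f(x_k) - \min_\Hc f) \leq \widetilde E_k \leq C$. For \ref{item:val_second_det}, telescoping the negative contribution in \eqref{eq:genrecursion} gives $\sum_k \epsilon_k s_k t_{k+1}^2 \|\nabla f(y_k)\|^2 < \infty$, which delivers the stated summability for $\nabla f(y_k)$ and sharpens to $\sum k^2 \|\nabla f(y_k)\|^2 < \infty$ when $\inf_k s_k > 0$ (since $\epsilon_k \asymp s_k$). To transfer this to $\nabla f(x_k)$, I would retain in the proof of Lemma \ref{lem:lyaperr} the nonnegative term $\frac{s_k^2}{2}t_{k+1}(t_{k+1}-1)\|\nabla f(x_k) - \nabla f(y_k)\|^2$ that was dropped along the way; combining the resulting extra summability with the bound on $\nabla f(y_k)$ via the triangle inequality gives the conclusion. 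For \ref{item:sumf}, the computation $t_{k+1}^2 - t_{k+1} - t_k^2 = -((\alpha-3)k + 1)/(\alpha-1)^2 \leq -ck$ shows that under $\alpha > 3$ the first term of \eqref{eq:genrecursion} contributes $-c s_k k(f(x_k) - \min_\Hc f)$ to the decrement; telescoping gives the claim.

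For \ref{item:vel} with $s_k \equiv s > 0$, the previous items furnish $\sum k^2 \|\nabla f(x_k)\|^2 < \infty$ (hence $k\|\nabla f(x_k)\| \to 0$), $\sum k(f(x_k) - \min_\Hc f) < \infty$, and $\|v_k\|$ bounded. My first task would be to show $(x_k)$ is bounded via a quasi-Fej\'er analysis of $\phi_k := \frac{1}{2}\|x_k - x^\star\|^2$: combining the update in Algorithm \ref{alg:hdde} with these summabilities and \eqref{eq:J0}, I would derive $\phi_{k+1} \leq \phi_k + \varepsilon_k$ with $\sum \varepsilon_k < \infty$, hence $\phi_k$ converges. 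Given $(x_k)$ bounded, rearranging the definition of $v_k$ and using $k\|\nabla f(x_{k-1})\| \to 0$ yields \ref{item:vel_first}. A finer accounting of the three-point identity term $\|v_{k+1} - v_k\|^2$ analyzed in \eqref{eq:vk+1vk} produces \ref{item:vel_second}. Finally, \ref{item:iter} follows by Opial's lemma: the quasi-Fej\'er argument provides convergence of $\|x_k - x^\star\|$ for every $x^\star \in S$, and every weak cluster point of $(x_k)$ belongs to $S$ because $f$ is weakly lower semicontinuous and $f(x_k) \to \min_\Hc f$.

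The main obstacle will be the quasi-Fej\'er argument at the heart of \ref{item:iter}: an anchor-based Lyapunov function distinct from $E_k$ must aggregate the momentum, the Hessian-damping correction $\beta_{k-1}\sqrt{s_{k-1}}\nabla f(x_{k-1})$, and the perturbations so that the remainder is summable, which will typically consume both the gradient summability from \ref{item:val_second_det} and the value summability from \ref{item:sumf}. A secondary technicality is the quadratic Gronwall step used in the first paragraph to decouple $\widetilde E_k$ from the cross term $\langle v_{k+1}, M_k\rangle$, which is the reason the definition of $E_k$ absorbs a tail series in the first place.
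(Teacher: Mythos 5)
Your treatment of items (i)--(iii) essentially tracks the paper's own argument: telescope \eqref{eq:genrecursion} with $\epsilon_k \asymp s_k$, note the noise residuals are summable under \eqref{eq:J0}, and control the cross term $\langle v_{k+1},M_k\rangle$ by a Gronwall-type step (your ``solve the quadratic and telescope'' is the same mechanism as the paper's appeal to Lemma~\ref{lemma:A2}); item (iii) uses exactly the paper's computation $t_{k+1}^2-t_{k+1}-t_k^2=-((\alpha-3)k+1)/(\alpha-1)^2$. One caveat on (ii): the term $\frac{s_k}{2}(t_{k+1}-1)\|\nabla f(x_k)-\nabla f(y_k)\|^2$ is not simply ``dropped along the way'' in Lemma~\ref{lem:lyaperr}; it is consumed in proving $A_k\ge\frac{\epsilon_k}{2}t_{k+1}\|\nabla f(y_k)\|^2$, since it is what absorbs the possibly negative cross term $t_{k+1}\beta_k\sqrt{s_k}\langle\nabla f(x_k),\nabla f(y_k)\rangle$. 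You can retain a fraction of it by taking $\epsilon_k$ strictly below $\beta_k(2\sqrt{s_k}-\beta_k)$ and redoing the Sylvester/Young estimate, but the paper's route is simpler and worth knowing: Lipschitz continuity of $\nabla f$ together with $x_{k+1}=y_k-s_k(\nabla f(y_k)+M^y_k)$ gives $\|\nabla f(x_{k+1})\|^2\le 2(1+Ls_k)^2\|\nabla f(y_k)\|^2+2L^2s_k^2\|M^y_k\|^2$, and the error term is $k^2$-summable under \eqref{eq:J0}.

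The genuine gap is in item (iv). You propose to first obtain boundedness of $(x_k)$ and existence of $\lim_k\|x_k-x^\star\|$ from a one-step quasi-Fej\'er inequality $\phi_{k+1}\le\phi_k+\varepsilon_k$ with $\sum_k\varepsilon_k<+\infty$, and to deduce (iv)(a) afterwards. This cannot work as stated: expanding $\|x_{k+1}-x^\star\|^2$ through the update produces the momentum cross term $\alpha_k\langle x_{k+1}-x^\star,x_k-x_{k-1}\rangle$, whose size is of order $\|x_k-x_{k-1}\|$ --- uncontrolled before (iv)(a), and only $O(1/k)$ (hence not summable) even after it --- so no summable one-step perturbation can be extracted. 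The correct resolution, and the paper's, is to rewrite this term via the three-point identity, which yields only the second-order difference inequality $h_{k+1}-h_k-\alpha_k(h_k-h_{k-1})\le(\text{summable})$, handled with the positive-part trick and Lemma~\ref{lemma:A6} applied to the corrected anchor increment $h_k-h_{k-1}+\beta\sqrt{s}\langle x_{k-1}-x^\star,\nabla f(x_{k-1})\rangle$, together with the summability of $\|\nabla f(x_k)\|$; moreover, making that remainder summable consumes $\sum_k k\|x_k-x_{k-1}\|^2<+\infty$, i.e.\ (iv)(b), so your proposed order (boundedness first, velocities later) is circular. The paper proves (iv)(a) first and independently of any boundedness of $(x_k)$: the descent lemma at $(x_k,y_k)$, the expansion of $\|y_k-x_k\|^2$, the $k^2$-weighted recursion \eqref{eq:recursive_vel} fed by (ii), (iii) and \eqref{eq:J0}, and Lemma~\ref{lemma:A2}; boundedness of $(x_k)$ then follows from $\sup_k\|v_k\|<+\infty$ plus (iv)(a), and (iv)(b) from re-summing \eqref{eq:recursive_vel} while keeping the $(2(\alpha-1)k-\alpha^2)d_k$ term. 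Your alternative claim that (iv)(b) follows from ``a finer accounting of $\|v_{k+1}-v_k\|^2$'' is also unsubstantiated: knowing $\sum_k\|v_{k+1}-v_k\|^2<+\infty$ does not transfer to the weighted velocity sum without essentially rebuilding the recursion above. So (i)--(iii) stand, but the velocity estimates and the weak convergence of the iterates are not established by your plan.
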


\begin{remark}
The choice of a constant step-size is crucial for obtaining the rates on velocity, and hence the convergence of the sequence. See Section~\ref{subsec:role_step-size} for a detailed discussion.
\end{remark}

\begin{proof}
We start from \eqref{eq:genrecursion} in Lemma~\ref{lem:lyaperr}, taking $\epsilon_k=\beta_k(2\sqrt{s_k}-\beta_k^2)=\eta s_k(1-\eta/4)$, and ignoring the first negative term in the rhs of \eqref{eq:genrecursion} since $t_{k+1}^2 - t_{k+1} \leq t_{k}^2$ when $\alpha \geq 3$ (see Remark~\ref{rem:lemma}). We get for all $k \geq k_0$,
\begin{multline}\label{eq:genrecursion2}
E_{k+1} - E_{k} 
\leq - \frac{\eta(1-\eta/4)}{4}s_kt_{k+1}^2 \left\|\nabla f(y_k)\right\|^2 \\
+ \frac{s_k^2}{2}t_{k+1}^2\left\|M^y_k\right\|^2 
+ \frac{\eta}{4}s_k^2t_{k+1}^2\left(\frac{2}{1-\eta/4} + \eta \right)\left\|M^x_k\right\|^2 
+ \frac{\eta}{4}s_{k-1}^2t_{k}^2\left(\frac{2}{1-\eta/4} + \eta \right)\left\|M^x_{k-1}\right\|^2 .
\end{multline}
Iterating and dropping the negative term, we get, in view of the definition of $E_k$ and using $k_0$ as defined in Lemma~\ref{lem:lyaperr}
\begin{multline}\label{eq:genrecursioniter}
s_kt_k^2 (f(x_k)-\min_{\Hc} f)+\frac{1}{2} \|v_k\|^2 \leq s_{k_0}t_{k_0}^2 (f(x_{k_0})-\min_{\Hc} f)+\frac{1}{2} \|v_{k_0}\|^2 
- \sum\limits_{i=k_0}^k \langle v_{i+1}, M_{i}\rangle \\
+ \sum_{i=k_0}^k\frac{s_i^2}{2}t_{i+1}^2\left\|M^y_i\right\|^2 
+ \frac{\eta}{2}\left(\frac{2}{1-\eta/4} + \eta \right)\sum_{i=k_0-1}^ks_i^2t_{i+1}^2\left\|M^x_i\right\|^2 .
\end{multline}
Since $t_k$ is of the order of $k$, the last two series in \eqref{eq:genrecursioniter} converge, hence are bounded thanks to Assumption \eqref{eq:J0}. This implies that there exists a constant $C > 0$ such that
\[
\|v_k\|^2 \leq C + 2\sum\limits_{i=1}^{k+1} \|v_{i}\|\|M_{i-1}\|
\leq C + 2\sum\limits_{i=1}^{k+1} \|v_{i}\|\left(\eta s_it_i\|M^x_{i-1}\|+s_it_i\|M^y_{i-1}\|\right)
\]
where we dropped the nonnegative term $s_kt_k^2 (f(x_k)-\min_{\Hc} f)$ in \eqref{eq:genrecursioniter}. In view of assumption \eqref{eq:J0}, the Gronwall-Bellman inequality in Lemma~\ref{lemma:A2} applies which gives
\begin{equation}\label{eq:vkbound}
C' := \sup_{k \geq k_0} \|v_k\| \leq \sqrt{C} + \sum_{k \in \NN} \left(\eta s_kt_k\|M^x_k\|+s_kt_k\|M^y_k\|\right) < +\infty .
\end{equation}
We then infer, in view of the definition of $E_k$, that
\[
st_k^2 (f(x_k)-\min_{\Hc} f)+\frac{1}{2} \|v_k\|^2 \leq C + \sum\limits_{i=k}^\infty \langle v_{i+1}, M_{i}\rangle \leq C+C'\sum\limits_{i=k}^{+\infty} \|M_i\| < +\infty, \qquad \forall k \geq k_0 ,
\]
This gives claim \ref{item:val_first_det} as $t_k=\frac{k-1}{\alpha-1}$.

\medskip

Summing \eqref{eq:genrecursion2} from $k_0$ to $k$ gives
\begin{multline*}
\frac{\eta(1-\eta/4)}{4}\sum_{i=k_0}^ks_it_{i+1}^2 \left\|\nabla f(y_i)\right\|^2 
\leq E_{k_0} - E_{k+1} \\
+ \sum_{i=k_0}^k\frac{s_i^2}{2}t_{i+1}^2\left\|M^y_i\right\|^2 
+ \frac{\eta}{2}\left(\frac{2}{1-\eta/4} + \eta \right)\sum_{i=k_0-1}^ks_i^2t_{i+1}^2\left\|M^x_i\right\|^2 .
\end{multline*}
The proof of \ref{item:val_first_det} has also shown that $E_k$ is bounded (in fact even converges thanks to Lemma~\ref{lemma:A3-det}). The conclusions of \ref{item:val_second_det} at $y_k$ follow by invoking the summability assumption \eqref{eq:J0}. To prove those at $x_k$, just observe from Lipschitz continuity of $\nabla f$ and \eqref{eq:genalgstepx} that
\[
\norm{\nabla f(x_{k+1})}^2 \leq 2(1+Ls_k)^2\norm{\nabla f(y_k)}^2 + 2L^2s_k^2\norm{M^y_k}^2 .
\]

\medskip

To show \ref{item:sumf}, we first observe that $t_{k}^2 - t_{k+1}^2 + t_{k+1} \geq \frac{\alpha-3}{(\alpha-1)^2}k$ since $t_{k+1}=\frac{k}{\alpha-1}$. Now, returning to \eqref{eq:genrecursion} and discarding the gradient term, we get
\begin{multline*}
E_{k+1} - E_{k} \leq -\frac{\alpha-3}{(\alpha-1)^2}s_k k \left( f(x_k) - \min_{\Hc} f\right)\\
+ \frac{s_k^2}{2}t_{k+1}^2\left\|M^y_k\right\|^2 
+ \frac{\eta}{4}s_k^2t_{k+1}^2\left(\frac{2}{1-\eta/4} + \eta \right)\left\|M^x_k\right\|^2 
+ \frac{\eta}{4}s_{k-1}^2t_{k}^2\left(\frac{2}{1-\eta/4} + \eta \right)\left\|M^x_{k-1}\right\|^2 .
\end{multline*}
Arguing as in the proof of \ref{item:val_second_det}, we get the claim.

\medskip

Let us now prove the claims of \ref{item:vel}. Applying Lemma~\ref{lemma:A1} with $x = x_k$ and $y = y_k$, rearranging the inner product, and using that $x_{k+1} = y_k - s \nabla f_k(y_k)$, we get
\begin{align}\label{eq:descvel}
f_k\left(x_{k+1}\right) + \frac{1}{2s} \| x_{k+1}-x_k\|^2 &\leq f_k\left(x_k\right) + \frac{1}{2s} \| y_k - x_k \|^2. 
\end{align}
Denote
\begin{align*}
    \widetilde{\nabla f}(x_k) = \nabla f(x_k) + M^x_k, \qquad
    \widetilde{\nabla f}(x_{k-1}) = \nabla f(x_{k-1}) + M^x_{k-1} .
\end{align*}
Let us estimate the last term of \eqref{eq:descvel}. According to the definition of $y_k$ with constant step-size, i.e., $s_k\equiv s$, along with the Cauchy-Schwarz inequality, we obtain
\begin{align*}
\| y_k - x_k \|^2 
=& 
\left\| 
\alpha_k (x_k - x_{k-1}) 
-\beta \sqrt{s}\left(\widetilde{\nabla f}(x_k) - \widetilde{\nabla f}(x_{k-1})\right)-\frac{\beta \sqrt{s}}{k}\widetilde{\nabla f}(x_{k-1})
\right\|^2\\
\leq& \alpha_k^2 \| x_k - x_{k-1} \|^2 + \beta^2 s \| \widetilde{\nabla f}(x_k) - \widetilde{\nabla f}(x_{k-1}) \|^2 + \frac{\beta^2 s}{k^2}\| \widetilde{\nabla f}(x_{k-1})\|^2 \\
&- \frac{2 \beta \alpha_k \sqrt{s}}{k} \langle x_k - x_{k-1},  \widetilde{\nabla f}(x_{k-1}) \rangle + \frac{2 \beta^2 {s}}{k} \langle \widetilde{\nabla f}(x_k)  - \widetilde{\nabla f}(x_{k-1}),  \widetilde{\nabla f}(x_{k-1})  \rangle \\
&- 2\beta \alpha_{k}\sqrt{s} \langle x_k - x_{k-1},  \widetilde{\nabla f}(x_k) - \widetilde{\nabla f}(x_{k-1})\rangle\\
\leq& \alpha_k^2 \| x_k - x_{k-1} \|^2 + \beta^2 s \| \widetilde{\nabla f}(x_k) - \widetilde{\nabla f}(x_{k-1}) \|^2 + \frac{\beta^2 s}{k^2}\| \widetilde{\nabla f}(x_{k-1}) \|^2 \\
&+ \frac{2 \beta \alpha_k \sqrt{s}}{k} \|x_k - x_{k-1}\|  \|\widetilde{\nabla f}(x_{k-1})\| + \frac{2 \beta^2 s}{k} \| \widetilde{\nabla f}(x_k) - \widetilde{\nabla f}(x_{k-1})\| \| \widetilde{\nabla f}(x_{k-1}) \| \\
&- 2\beta \alpha_{k} \sqrt{s} \langle x_k - x_{k-1},  \nabla f(x_k)  -  \nabla f(x_{k-1})\rangle + 2\beta \alpha_{k}\sqrt{s}\| x_k - x_{k-1}\|  \| M^x_k -  M^x_{k-1} \| \\
\leq& \alpha_k^2 \| x_k - x_{k-1} \|^2 + \beta^2 s \| \widetilde{\nabla f}(x_k) - \widetilde{\nabla f}(x_{k-1}) \|^2 + \frac{\beta^2 s}{k^2}\| \widetilde{\nabla f}(x_{k-1}) \|^2 \\
&+ \frac{2 \beta \alpha_k \sqrt{s}}{k} \|x_k - x_{k-1}\|  \|\widetilde{\nabla f}(x_{k-1})\| + \frac{2 \beta^2 s}{k} \| \widetilde{\nabla f}(x_k) - \widetilde{\nabla f}(x_{k-1})\| \| \widetilde{\nabla f}(x_{k-1}) \| \\
&+ 2\beta \alpha_{k}\sqrt{s}\| x_k - x_{k-1}\|  \| M^x_k -  M^x_{k-1} \| ,
\end{align*}
where we used monotonicity of $\nabla f$.
To lighten notation, let $g_k := \sqrt{s}\left(\|\widetilde{\nabla f}(x_k)\| + \| \widetilde{\nabla f}(x_{k-1})\|\right)$ and $e_k = \sqrt{s} \left(\| M^x_k - M_{k-1}^x \|\right)$. The last inequality now reads
\begin{align*}
\| y_k - x_k \|^2 & \leq  \alpha_k^2 \| x_k - x_{k-1} \|^2 + \beta^2  g_k^2 + \frac{\beta^2 }{k^2} g_k^2  + \frac{2 \beta \alpha_{k}}{k} \| x_k - x_{k-1} \| g_k + \frac{2 \beta^2 }{k}g_k^2  + 2 \beta \alpha_{k}\| x_k - x_{k-1}\|  e_k .
\end{align*}
Plugging this into \eqref{eq:descvel}, we obtain
\begin{multline*}
f_k(x_{k+1}) + \frac{1}{2s} \| x_{k+1} - x_k \|^2 
\leq  f_k(x_k) +  \frac{\alpha_k^2}{2s} \| x_k - x_{k-1} \|^2 + \frac{\beta^2}{2s}  g_k^2 + \frac{\beta^2}{2sk^2}g_k^2\\
+ \frac{\beta \alpha_k}{sk}\| x_k - x_{k-1} \| g_k
 + \frac{\beta^2}{sk}g_k^2  + \frac{\alpha_k \beta}{s} \| x_k - x_{k-1}\|  e_k .
\end{multline*}
Using the definition of $f_k$, we can write:
\begin{align*}
f(x_{k+1}) + \langle M^y_k,x_{k+1}\rangle + \frac{1}{2s} \| x_{k+1} - x_k \|^2 &\leq  f(x_k) + \langle M^y_k,x_{k}\rangle + \frac{1}{2s}\alpha_k^2 \| x_k - x_{k-1} \|^2 + \frac{1}{2s}\beta^2  g_k^2 + \frac{1}{2s}\frac{\beta^2g_k^2}{k^2}\\
&\quad+ \frac{\beta \alpha_k}{k}\| x_k - x_{k-1} \| g_k
 + \frac{\beta^2 g_k^2}{k s}  + \frac{\alpha_k \beta}{s} \| x_k - x_{k-1}\|  e_k .
\end{align*}
Multiplying by $s$, and using Cauchy-Schwarz inequality 
we get
\begin{align*}
s\left(f(x_{k+1}) - \min_{\Hc} f\right)  + \frac{1}{2} \| x_{k+1} - x_k \|^2 &\leq  s\left(f(x_k) -\min_{\Hc} f\right) + s\| M^y_k\| \|x_{k+1} - x_{k}\| + \frac{1}{2}\alpha_k^2 \| x_k - x_{k-1} \|^2 + \frac{1}{2}\beta^2  g_k^2 \\
&\quad+\frac{1}{2}\frac{\beta^2g_k^2}{k^2}+ \frac{\beta \alpha_k s}{k}\| x_k - x_{k-1} \| g_k
 + \frac{\beta^2 g_k^2}{k }  + \alpha_k \beta \| x_k - x_{k-1}\|  e_k .
\end{align*}
Let \(\theta_k = s\left(f(x_k) - \min_{\Hc} f\right)\) and \(d_k = \frac{1}{2} \| x_k - x_{k-1} \|^2\). The last inequality now reads
\begin{align*}
\theta_{k+1} + d_{k+1}
&\leq \theta_k + \frac{(k-\alpha)^2}{k^2} d_k + \frac{\beta^2 g_k^2}{2}\left(1 + \frac{1}{k}\right)^2 + \|x_k - x_{k-1}\| \left(\frac{ \beta \alpha_k }{k}g_k + \beta \alpha_k  e_k \right) + s\| M^y_k\| \|x_{k+1} - x_{k}\|\\
&\leq \theta_k + \frac{(k-\alpha)^2+1}{k^2} d_k + \frac{\beta^2 g_k^2}{2}\left(\left(1 + \frac{1}{k}\right)^2+\alpha_k^2\right) + \|x_k - x_{k-1}\| \beta \alpha_k  e_k + s\| M^y_k\| \|x_{k+1} - x_{k}\| .
\end{align*}
Multiplying by $k^2$ and rearranging, we get
\begin{equation}\label{eq:recursive_vel}
\begin{aligned}
k^2 \theta_{k+1}+k^2 d_{k+1}+\left(2(\alpha-1)k-\alpha^2\right) d_k &\leq (k-1)^2 \theta_k + (k-1)^2 d_k + \left(2k-1\right)\theta_k \\
&\quad + \frac{\beta^2}{2}(1+\alpha_k^2)(k+1)^2g_k^2 + k\|x_k - x_{k-1}\| \left( k \beta \alpha_k e_k\right) \\ &\quad + k\|x_{k+1} - x_{k}\| (sk\| M^y_k\| ).
\end{aligned}
\end{equation}
Let us denote for short $m_k := \beta k  \alpha_k e_k$. 
The term $(2(\alpha-1)k-\alpha^2) d_k$ is non-negative for $k \geq k_0$ large enough since $\alpha > 1$. We can then discard it from the lhs of \eqref{eq:recursive_vel}. Moreover $0 \leq \alpha_k \leq 1$.
Thus iterating from \(k = k_0\) to \(k = K > k_0\) we get
\begin{align}
K^2 d_{K+1} 
&\leq (k_0 - 1)^2 \theta_{k_0} + (k_0 - 1)^2 d_{k_0} + 2 \sum\limits_{k=k_0}^K k \theta_k +  \beta^2 \sum\limits_{k=k_0}^K (k+1)^2 g_k^2 \nonumber\\
&+ \sum\limits_{k=k_0}^K m_k k\|x_k - x_{k-1}\| + 
s^2\sum\limits_{k=k_0}^{K+1} ((k-1)\norm{M^y_{k-1}}) (k-1)\|x_{k} - x_{k-1}\| \nonumber\\
&= (k_0 - 1)^2 \theta_{k_0} + (k_0 - 1)^2 d_{k_0} + 2 \sum\limits_{k=k_0}^K k \theta_k +  \beta^2 \sum\limits_{k=k_0}^K (k+1)^2 g_k^2 \nonumber\\
&+ \sum\limits_{k=k_0}^K (m_k+s^2k\norm{M^y_{k-1}}) k\|x_k - x_{k-1}\| + 
(s^2K\norm{M^y_k}) K\|x_{K+1} - x_{K}\| \nonumber\\
&\leq (k_0 - 1)^2 \theta_{k_0} + (k_0 - 1)^2 d_{k_0} + 2 \sum\limits_{k=k_0}^K k \theta_k +  \beta^2 \sum\limits_{k=k_0}^K (k+1)^2 g_k^2 \nonumber\\
&+ \sum\limits_{k=k_0}^K (m_k+s^2k\norm{M^y_{k-1}}) k\|x_k - x_{k-1}\| 
+ s^4K^2\norm{M^y_k}^2 + \frac{K^2}{2}d_{K+1} . \label{eq:dkineq}
\end{align}
From claims \ref{item:val_first_det}, \ref{item:val_second_det} and \ref{item:sumf} proved above as well as assumption \eqref{eq:J0}, we have
\begin{equation}\label{eq:sum_res_{k}}
\sum_{k \in \NN} k \theta_k < +\infty \quad \text{and} \quad \sum_{k \in \NN} k^2 g_k^2 < +\infty.
\end{equation}
Thus there exists a constant \(C > 0\) such that
\begin{align*}
(K \| x_{K+1} - x_K \|)^2 \leq & 4C + \sum\limits_{k=1}^K 4(m_k+s^2k\norm{M^y_{k-1}})\left(k\|  x_k - x_{k-1} \|\right),
\end{align*}
Using Assumption \eqref{eq:J0}, we can claim that
\begin{equation}\label{eq:sum_mk}
\sum_{k \in \NN} (m_k+s^2k\norm{M^y_{k-1}}) < +\infty .
\end{equation}
Applying Lemma~\ref{lemma:A2}, we obtain
\begin{align*}
\sup_{k \in \NN} k \| x_k - x_{k-1} \| < +\infty.
\end{align*}
To see that assertion \ref{item:vel_second} holds, let us return to \eqref{eq:recursive_vel} and this time without discarding the nonnegative term $\left(2(\alpha-1)k - \alpha^2\right)d_k$. Summing \eqref{eq:recursive_vel} gives
\begin{align*}
\sum_{k=k_0}^K \left(2(\alpha-1)k - \alpha^2\right)d_k 
&\leq (k_0 - 1)^2 \theta_{k_0} + (k_0 - 1)^2 d_{k_0} +
2 \sum\limits_{k=k_0}^K k \theta_k +  \beta^2 \sum\limits_{k=k_0}^K (k+1)^2 g_k^2 \\
&\quad + \sum\limits_{k=k_0}^K m_k k\|x_k - x_{k-1}\| + s^2\sum\limits_{k=k_0}^{K} (k\norm{M^y_k}) (k+1)\|x_{k+1} - x_{k}\| .
\end{align*}
In view of claim \ref{item:vel_first}, there exists a constant $C > 0$ such that
\begin{align*}
\left(\alpha -1\right)\sum_{k=k_0}^K \left(2k - \alpha -1\right)d_k 
&\leq (k_0 - 1)^2 \theta_{k_0} + (k_0 - 1)^2 d_{k_0} +
2 \sum\limits_{k=k_0}^K k \theta_k + \beta^2 \sum\limits_{k=k_0}^K (k+1)^2 g_k^2  \\
&+ C\sum\limits_{k=k_0}^K (m_k + s^2 k \norm{M^y_k}) .
\end{align*}
Passing to the limit as $K \to +\infty$, using \eqref{eq:sum_res_{k}} and \eqref{eq:sum_mk} gives claim \ref{item:vel_second}.\\


The proof for \ref{item:iter} is based on Opial's lemma (Lemma~\ref{lemma:A4}) with \( S = \argmin_{\Hc} f \). From \ref{item:val_first_det}, we have \( f(x_k) \to \min_{\Hc} f \). Moreover, recall that $v_k := (x_{k-1}-x^\star)+t_k(x_k-x_{k-1}+\beta\sqrt{s}\nabla f(x_{k-1}))$ is bounded, as shown in \eqref{eq:vkbound}. Claim~\ref{item:vel_first} also shows that $t_k(x_k-x_{k-1})$ is bounded and so is $t_k\nabla f(x_{k-1})$ thanks to \ref{item:val_second_det} (it is even $o(1)$). We then deduce that $(x_k)_{k \in \NN}$ is also bounded. Thus, for any weak subsequential cluster point of $x_k$, say $x_{k_j} \to \bar{x}$ as $j \to \infty$, the continuity of $f$ yields $f(\bar{x})=\lim_{j \to \infty}f(x_{k_j}) = \min_{\Hc} f$, that is, $\bar{x} \in S$. This shows that item (i) of Lemma~\ref{lemma:A4} holds. It remains to verify item (ii) of this lemma, that is, that \( \lim_{k \to \infty} \|x_k - x^\star\| \) exists for any \( x^\star \in S \). 

Denote for short the anchor sequence \( h_k = \frac{1}{2} \|x_k - x^\star\|^2 \). The idea of the proof is to establish a discrete second-order differential inequality satisfied by the sequence \( (h_k)_{k \in \NN} \). We recall the three-point identity
\begin{align*}
\frac{1}{2}\|a - b\|^2 + \frac{1}{2}\|a - c\|^2 = & \frac{1}{2}\|b - c\|^2 + \langle a - b, a - c \rangle,
\end{align*}
which holds for any \( a, b, c \in \Hc \). Applying this identity with \( b = x^\star, a = x_{k+1}, c = x_k \), we obtain
\begin{equation}\label{eq:three_point_hk}
\begin{aligned}
h_k - h_{k+1} = & \frac{1}{2} \|x_{k+1} - x_k\|^2 + \langle x_{k+1} - x^\star, x_k - x_{k+1} \rangle.
\end{aligned}
\end{equation}
By the definition of \( y_k \), we have:
\begin{align*}
x_k - x_{k+1} &=  y_k - x_{k+1} - \alpha_k \left(x_k - x_{k+1}\right) +  \beta \sqrt{s}\bigg( \left(\nabla f(x_k) + M^x_k\right) - \left(\nabla f(x_{k-1}) - M_{k-1}^x\right)\bigg) \\
\quad & + \frac{\beta \sqrt{s}}{k}\left( \nabla f(x_{k-1}) + M^x_{k-1}\right) .
\end{align*}
Thus
\begin{align*}
h_k - h_{k+1} = & \frac{1}{2}\|x_{k+1} - x_k\|^2 + \langle x_{k+1} - x^\star, y_k - x_{k+1} \rangle - \alpha_k \langle x_{k+1} - x^\star, x_k - x_{k+1} \rangle \\
\quad& + \langle x_{k+1} - x^\star, \beta\sqrt{s}\left(   \nabla f(x_k) - \nabla f(x_{k-1})\right)\rangle + \frac{\beta \sqrt{s}}{k} \langle x_{k+1} - x^\star,\nabla f(x_{k-1})  \rangle\\
\quad&+ \langle x_{k+1} - x^\star, \beta\sqrt{s}\left(   M^x_k - M^x_{k-1}\right)\rangle + \frac{\beta \sqrt{s}}{k} \langle x_{k+1} - x^\star,M^x_{k-1}  \rangle .
\end{align*}
Rearranging the equation, we can write
\begin{equation}\label{eq:h_k_recursive}
\begin{aligned}
- \langle x_{k+1} - x^\star, y_k - x_{k+1} \rangle = & h_{k+1} - h_{k} + \frac{1}{2}\|x_{k+1} - x_k\|^2  - \alpha_k \langle x_{k+1} - x^\star, x_k - x_{k+1} \rangle \\
\quad& + \langle x_{k+1} - x^\star, \beta\sqrt{s}\left(   \nabla f(x_k) - \nabla f(x_{k-1})\right)\rangle + \frac{\beta \sqrt{s}}{k} \langle x_{k+1} - x^\star,\nabla f(x_{k-1})  \rangle\\
\quad&+ \langle x_{k+1} - x^\star, \beta\sqrt{s}\left(   M^x_k - M^x_{k-1}\right)\rangle + \frac{\beta \sqrt{s}}{k} \langle x_{k+1} - x^\star,M^x_{k-1}  \rangle .
\end{aligned}
\end{equation}
Let us rewrite the rhs using the definition of $x_{k+1}$ from Algorithm~\ref{alg:hdde}
\begin{align*}
    -\langle x_{k+1} - x^\star, y_k - x_{k+1}\rangle &=
    -s\langle x_{k+1} - x^\star, \nabla f(y_k) + M^y_k\rangle\\
    &=-s\langle y_{k} -s\nabla f(y_k) - sM^y_k - x^\star, \nabla f(y_k)\rangle - s\langle x_{k+1} - x^\star, M^y_k\rangle \\
    &=-s\langle y_k - x^\star, \nabla f(y_k) \rangle + s^2\|\nabla f(y_k)\|^2 + s^2\langle M^y_k,\nabla f(y_k)\rangle - s\langle x_{k+1} - x^\star, M^y_k\rangle .
\end{align*}
Since $x^\star \in S$, $-s\langle y_k - x^\star, \nabla f(y_k) \rangle \le 0$ due to the monotonicity of $\nabla f$, hence we can write
\begin{align*}
-\langle x_{k+1} - x^\star, y_k - x_{k+1}\rangle &\le  s^2\|\nabla f(y_k)\|^2 + s^2\langle M^y_k,\nabla f(y_k)\rangle - s\langle x_{k+1} - x^\star, M^y_k\rangle .
\end{align*}
Therefore, plugging this relation into \eqref{eq:h_k_recursive}, we get the inequality
\begin{equation}\label{eq:h_k_ineq}
\begin{aligned}
&h_{k+1} - h_k + \frac{1}{2} \|x_{k+1} - x_k\|^2 
- \alpha_k \langle x_{k+1} - x^\star, x_k - x_{k-1} \rangle
+ \langle x_{k+1} - x^\star, \beta \sqrt{s} (\nabla f(x_k) - \nabla f(x_{k-1})) \rangle \\
&\quad+ \frac{\beta \sqrt{s}}{k} \left\langle x_{k+1} - x^\star, \nabla f(x_{k-1}) \right\rangle + \langle x_{k+1} - x^\star, \beta \sqrt{s}\left(   M^x_k - M^x_{k-1}\right)\rangle\\
&\quad+ \frac{\beta \sqrt{s}}{k} \langle x_{k+1} - x^\star,M^x_{k-1}  \rangle - s^2\|\nabla f(y_k)\|^2 - s^2\langle M^y_k,\nabla f(y_k)\rangle + s\langle x_{k+1} - x^\star, M^y_k\rangle \leq 0.
\end{aligned}
\end{equation}
Observe from \eqref{eq:three_point_hk} that
\begin{equation}\label{eq:h_k_recursive_prev}
\begin{array}{rl}
h_{k-1} - h_k = & \frac{1}{2} \|x_k - x_{k-1}\|^2 - \langle x_k - x^\star, x_k - x_{k-1} \rangle.
\end{array}
\end{equation}
Combining \eqref{eq:h_k_ineq} and \eqref{eq:h_k_recursive_prev}, we get
\begin{align*}
h_{k+1} - h_k - \alpha_k (h_k - h_{k-1}) &\leq  -\frac{1}{2}\|x_{k+1} - x_k\|^2 + \alpha_k \left( \frac{1}{2}\|x_k - x_{k-1}\|^2 + \langle x_k - x_{k-1}, x_{k+1} - x_k \rangle \right) \\
& - \beta \sqrt{s}  \langle x_{k+1} - x^\star, \nabla f(x_k) - \nabla f(x_{k-1})\rangle - \frac{\beta \sqrt{s}}{k}\langle x_{k+1} - x^\star ,\nabla f(x_{k-1}) \rangle \\
&- \beta \sqrt{s}\langle x_{k+1} - x^\star,  M^x_k - M^x_{k-1}\rangle- \frac{\beta \sqrt{s}}{k} \langle x_{k+1} - x^\star,M^x_{k-1}  \rangle\\
& + s^2\|\nabla f(y_k)\|^2 + s^2\langle M^y_k,\nabla f(y_k)\rangle - 
 s \langle x_{k+1} - x^\star, M^y_k \rangle .
\end{align*}
Let us now group all the terms directly involving the noise terms as 
\begin{align*}
\varepsilon_k := |\langle \beta\sqrt{s}\left( M^x_k - M^x_{k-1} + k^{-1}M^x_{k-1}\right) - sM^y_k, x_{k+1} - x^\star\rangle + s^2 \langle M^y_k, \nabla f(y_k) \rangle| .
\end{align*}
So, the last inequality above now reads
\begin{equation}\label{eq:h_k_compiled}
\begin{aligned}
h_{k+1} - h_k - \alpha_k (h_k - h_{k-1}) &\leq  -\frac{1}{2}\|x_{k+1} - x_k\|^2 + \alpha_k \left( \frac{1}{2}\|x_k - x_{k-1}\|^2 + \langle x_k - x_{k-1}, x_{k+1} - x_k \rangle \right) \\
& - \beta \sqrt{s} \langle x_{k+1} - x^\star, \nabla f(x_k) - \nabla f(x_{k-1})\rangle - \frac{\beta \sqrt{s}}{k}\langle x_{k+1} - x^\star ,\nabla f(x_{k-1}) \rangle \\
& + s^2 \|\nabla f(y_k)\|^2 + \varepsilon_k .
\end{aligned}
\end{equation}
Let us rewrite first inner product on the rhs of \eqref{eq:h_k_compiled} as
\begin{align*}
\left\langle x_{k+1}-x^\star, \nabla f\left(x_{k}\right)- \nabla f\left(x_{k-1}\right)\right\rangle 
& = \left\langle x_{k} - x^\star, \nabla f\left(x_{k}\right)\right\rangle - \left\langle x_{k-1} - x^\star, \nabla f\left(x_{k-1}\right)\right\rangle \\ & + \left\langle x_{k+1} - x_{k}, \nabla f\left(x_{k}\right)\right\rangle - \left\langle x_{k+1} - x_{k-1}, \nabla f\left(x_{k-1}\right)\right\rangle .
\end{align*} 
Inserting this into \eqref{eq:h_k_compiled}, we obtain
\begin{equation}\label{eq:h_k_det}
\begin{aligned}
h_{k+1} - h_k - \alpha_k (h_k - h_{k-1}) &\leq  -\frac{1}{2}\|x_{k+1} - x_k\|^2 + \alpha_k \left( \frac{1}{2}\|x_k - x_{k-1}\|^2 + \langle x_k - x_{k-1}, x_{k+1} - x_k \rangle \right)\\
&\quad -\beta \sqrt{s}\left\langle x_{k} - x^\star, \nabla f\left(x_{k}\right)\right\rangle +\beta\sqrt{s} \left\langle x_{k-1} - x^\star, \nabla f\left(x_{k-1}\right)\right\rangle\\ 
& \quad - \beta\sqrt{s}\langle x_{k+1} - x_{k}, \nabla f(x_k) \rangle + \beta\sqrt{s} \langle x_{k+1} - x_{k-1}, \nabla f(x_{k-1})\rangle\\
 & \quad - \frac{\beta\sqrt{s}}{k}\langle x_{k+1} - x^\star, \nabla f(x_{k-1})\rangle + s^2 \|\nabla f(y_k)\|^2 + \varepsilon_k .
\end{aligned}
\end{equation}
Rearranging the terms in the expression can yield
\begin{equation}\label{eq:h_k_inter}
\begin{aligned}
h_{k+1} &- h_k - \alpha_k \left(h_k - h_{k-1}\right) 
+ \beta \sqrt{s} \langle x_k - x^\star, \nabla f(x_k) \rangle -  \beta \sqrt{s} \langle x_{k-1} - x^\star, \nabla f(x_{k-1}) \rangle\\
&  \leq -\frac{1}{2} \|x_{k+1} - x_k\|^2 + \alpha_k \left( \frac{1}{2} \|x_k - x_{k-1}\|^2 + \langle x_k - x_{k-1}, x_{k+1} - x_k \rangle \right) \\
& \quad - \beta\sqrt{s}\langle x_{k+1} - x_{k}, \nabla f(x_k) \rangle + \beta\sqrt{s} \langle x_{k+1} - x_{k-1}, \nabla f(x_{k-1})\rangle\\
 & \quad - \frac{\beta\sqrt{s}}{k}\langle x_{k+1} - x^\star, \nabla f(x_{k-1})\rangle + s^2 \|\nabla f(y_k)\|^2 + \varepsilon_k .
\end{aligned}
\end{equation}
The motivation behind the last rearrangement will be made clear later when we reason that all the terms on the rhs are part of a summable sequence. Set $\theta_k := h_k - h_{k-1} + \beta\sqrt{s}\langle x_{k-1} - x^\star,  \nabla f(x_{k-1})\rangle$. From \eqref{eq:h_k_inter}, and using the relation $\alpha_k = \left(1 - \frac{\alpha}{k}\right)$ we infer that
\begin{align*}
    \theta_{k+1} - \alpha_k\theta_k &\le \alpha\frac{\beta\sqrt{s}}{k}\langle x_{k-1} -x^\star, \nabla f(x_{k-1})\rangle -\frac{1}{2} \|x_{k+1} - x_k\|^2 + \alpha_k \left( \frac{1}{2} \|x_k - x_{k-1}\|^2 + \langle x_k - x_{k-1}, x_{k+1} - x_k \rangle \right) \\
& \quad - \beta\sqrt{s}\langle x_{k+1} - x_{k}, \nabla f(x_k) \rangle + \beta\sqrt{s} \langle x_{k+1} - x_{k-1}, \nabla f(x_{k-1})\rangle\\
 & \quad - \frac{\beta\sqrt{s}}{k}\langle x_{k+1} - x^\star, \nabla f(x_{k-1})\rangle + s^2 \|\nabla f(y_k)\|^2 + \varepsilon_k .
\end{align*}
Applying Young's inequality to each inner product in the rhs, ignoring the square negative term and rearranging, we get
\begin{equation}\label{eq:h_k_final}
\begin{aligned}
    \theta_{k+1} - \alpha_k\theta_k 
    &\leq \frac{(\alpha^2+1)\beta^2C^2s}{2k^3} + \frac{3}{2}k\|\nabla f(x_{k-1})\|^2 \\
    &+ \|x_k - x_{k-1}\|^2 + \frac{(1+\beta^2s)}{2} \|x_{k+1} - x_k \|^2 \\
    &+\frac{1}{2}\|\nabla f(x_k) \|^2 + \frac{\beta^2s}{2} \| x_{k+1} - x_{k-1}\|^2 + s^2 \|\nabla f(y_k)\|^2 + \varepsilon_k .
\end{aligned}
\end{equation}
where we have used that $\alpha_k \leq 1$ and $\sup_{k \in \NN} \norm{x_k-x^\star} \leq C$ for some constant $C > 0$ as proved above.
Gathering the rhs of \eqref{eq:h_k_final} in a nonnegative summable single term $e_k$,  we can write \eqref{eq:h_k_final} as
\begin{align*}
\theta_{k+1} \leq \alpha_k\theta_k + e_k .
\end{align*}
Taking the positive part (following same line of reasoning as in \cite{ACPR}), we can write
\begin{align*}
\left[\theta_{k+1}\right]_{+} \leq \alpha_k\left[\theta_k\right]_{+} + e_k .
\end{align*}
At this point, we can safely argue that $\sum_{k\in\NN}ke_k < +\infty$. In fact, for the terms involving the gradients and discrete velocities, this is due to \ref{item:val_second_det} and \ref{item:vel_second}. We also have $\sum_{k\in\NN}k\varepsilon_k < +\infty$ due to the boundedness of the sequence $(x_k)_{k \in \NN}$ and the assumption~\eqref{eq:J0}. We are then in position to invoke Lemma~\ref{lemma:A6} to deduce that
\begin{align*}
\sum_{k \in \NN} \big[ h_k - h_{k-1} + \beta \sqrt{s} \langle x_{k-1} - x^\star, \nabla f(x_{k-1}) \rangle \big]_+ < +\infty.
\end{align*}
The sequence \( (\|\nabla f(x_{k})\|)_{k \in \NN} \) is also summable. To see this we have 
\[
\|\nabla f(x_k)\| \leq \frac{1}{2k^2} + \frac{1}{2}k^2\|\nabla f(x_k)\|^2.
\]
The first term is summable and that of the second follows from \ref{item:val_second_det}. We then deduce that \( \sum_k [h_k - h_{k-1}]_+ < +\infty \), which implies that the limit of the sequence \( (h_k)_{k \in \NN} \) exists. This completes the proof.
\end{proof}

\paragraph{From $\mathcal{O}(\cdot)$ to $o(\cdot)$ rates}
The $\mathcal{O}(1/k^2)$ convergence rate on values and
velocities be improved to the even faster asymptotic rate $o(1/k^2)$.
\begin{theorem}\label{thm:fast_thm}
Let $f$ satisfy \eqref{eq:mainassump} and $(x_k)_{k \in \NN}$ be a sequence generated by Algorithm~\ref{alg:hdde}, with $\alpha > 3$, a constant step-size $s_k \equiv s > 0$, and assume that \eqref{eq:J0} holds. Then 
\begin{enumerate}[label=(\roman*)]
\item $f(x_k) - \min_{\Hc} f = o\left(\frac{1}{k^2}\right)$ as $k \rightarrow +\infty$ and \label{item:fast_first}
\item $\|x_k - x_{k-1}\| = o\left(\frac{1}{k}\right)$ as $k \rightarrow +\infty$.  \label{item:fast_second}
\end{enumerate}
\end{theorem}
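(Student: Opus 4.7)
The plan is to promote both $\mathcal{O}(1/k^2)$ rates from Theorem~\ref{thm:vals_thm} to $o(1/k^2)$ by showing that the relevant nonnegative quantities actually converge, and then using the divergence of the harmonic series against the summability properties already proved to force those limits to be zero. I will proceed in three steps.

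\textbf{Step 1 (the Lyapunov energy converges).} Starting from the recursion \eqref{eq:genrecursion2} and dropping the negative gradient term, we have $E_{k+1}\le E_k+u_k$ with $u_k\ge 0$. Since $t_k\sim k/(\alpha-1)$ and $s_k\equiv s$, assumption \eqref{eq:J0} yields $\sum_k u_k<+\infty$. Moreover $E_k$ is bounded below, since its first two terms are nonnegative and the tail series $\sum_{\kappa\ge k}\langle v_{\kappa+1},M_\kappa\rangle$ is absolutely summable by $\sup_j\|v_j\|<+\infty$ (from \eqref{eq:vkbound}) combined with $\sum_j\|M_j\|<+\infty$ (again by \eqref{eq:J0} and $t_k\sim k$). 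Hence, a standard Lemma~\ref{lemma:A3-det}-type argument shows $E_k\to E_\infty\in\RR$; since the tail sum vanishes as $k\to\infty$, this gives $L_k:=s t_k^2(f(x_k)-\min_{\Hc}f)+\tfrac12\|v_k\|^2\to E_\infty$.

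\textbf{Step 2 (convergence of $k^2\|x_{k+1}-x_k\|^2$ and claim \ref{item:fast_second}).} I return to the one-step inequality \eqref{eq:recursive_vel} obtained in the proof of Theorem~\ref{thm:vals_thm}. With $\alpha>3$ and $k\ge k_0$ large, the term $(2(\alpha-1)k-\alpha^2)d_k$ on the lhs is nonnegative and provides the needed dissipation. After applying Young's inequality to the cross terms $k\|x_{k+1}-x_k\|\cdot sk\|M^y_k\|$ and $k\|x_k-x_{k-1}\|\cdot m_k$ using $\sup_k k\|x_k-x_{k-1}\|<+\infty$ (Theorem~\ref{thm:vals_thm}\ref{item:vel_first}), the whole rhs becomes summable thanks to $\sum_k k\theta_k<+\infty$, $\sum_k k^2 g_k^2<+\infty$, and $\sum_k(m_k+s^2 k\|M^y_{k-1}\|)<+\infty$ (all established in the proof of Theorem~\ref{thm:vals_thm}). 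A further Lemma~\ref{lemma:A3-det}-type argument shows that $(k^2 d_{k+1})_{k\in\NN}$ converges to some $\ell\ge 0$. But $\sum_k kd_k=\sum_k (k^2 d_k)/k<+\infty$ by Theorem~\ref{thm:vals_thm}\ref{item:vel_second} together with $\sum_k 1/k=+\infty$ forces $\ell=0$, which is exactly claim \ref{item:fast_second}.

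\textbf{Step 3 (conclusion for values).} From Step~2, $t_k\|x_k-x_{k-1}\|\to 0$. Next, $t_k\|\nabla f(x_{k-1})\|\to 0$: summability $\sum_k k^2\|\nabla f(x_k)\|^2<+\infty$ (Theorem~\ref{thm:vals_thm}\ref{item:val_second_det}), together with $L$-Lipschitz continuity of $\nabla f$ and $\|x_k-x_{k-1}\|=o(1/k)$ from Step~2, prevents $k\|\nabla f(x_k)\|$ from oscillating away from zero and yields $k\|\nabla f(x_k)\|\to 0$. Since $\|x_{k-1}-x^\star\|$ converges by Theorem~\ref{thm:vals_thm}\ref{item:iter}, the definition of $v_k$ gives that $\|v_k\|$ converges, hence so does $\tfrac12\|v_k\|^2$. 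Combined with Step~1, the nonnegative sequence $s t_k^2(f(x_k)-\min_{\Hc}f)$ converges to some $\ell'\ge 0$; the same harmonic-divergence argument, applied with the summability $\sum_k k(f(x_k)-\min_{\Hc}f)<+\infty$ from Theorem~\ref{thm:vals_thm}\ref{item:sumf}, forces $\ell'=0$, giving claim \ref{item:fast_first}.

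\textbf{Main obstacle.} The crux is Step~2. One must extract a usable one-step inequality for $k^2 d_{k+1}-(k-1)^2 d_k$ and track all cross terms carefully so that every noise-related contribution is summable under \eqref{eq:J0}; the role of the constant step-size is essential here (consistent with Remark~\ref{rem:lemma}), because variable $s_k$ would destroy the matching of the prefactors needed for the coefficient $(2(\alpha-1)k-\alpha^2)$ on the lhs of \eqref{eq:recursive_vel} to remain positive and dominant.
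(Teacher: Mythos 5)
Your overall mechanism---show that suitably rescaled quantities converge, then play the divergence of $\sum_k 1/k$ against the summability facts of Theorem~\ref{thm:vals_thm} (claims (iii) and (iv)(b)) to force the limits to vanish---is exactly the paper's mechanism (Lemma~\ref{lemma:A3-det} followed by Lemma~\ref{lemma:Asumo}). Steps 1 and 3 are fine in substance (the paper even notes that $E_k$ converges, and $k\|\nabla f(x_k)\|\to 0$ follows directly from term-wise decay of the summable sequence $k^2\|\nabla f(x_k)\|^2$, no Lipschitz argument needed; convergence of $\|x_{k-1}-x^\star\|$ is indeed available from the Opial step in the proof of Theorem~\ref{thm:vals_thm}, not from weak convergence per se). The genuine gap is in Step 2, which you correctly identify as the crux. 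Inequality \eqref{eq:recursive_vel} reads
\[
k^2\theta_{k+1}+k^2 d_{k+1}+\bigl(2(\alpha-1)k-\alpha^2\bigr)d_k \;\leq\; (k-1)^2\theta_k+(k-1)^2 d_k+(2k-1)\theta_k+\text{(noise and gradient terms)} .
\]
If you try to run a Lemma~\ref{lemma:A3-det}-type argument on $a_k:=(k-1)^2 d_k$ alone, you must drop $k^2\theta_{k+1}\geq 0$ from the left, and the right-hand side then retains $(k-1)^2\theta_k+(2k-1)\theta_k=k^2\theta_k$. This term is \emph{not} summable under the available estimates: Theorem~\ref{thm:vals_thm} only gives $\sum_k k\theta_k<+\infty$ and $\theta_k=\mathcal{O}(1/k^2)$, so $k^2\theta_k$ is merely bounded (its summability is nowhere proved and is strictly stronger than the $o(1/k^2)$ rate you are trying to establish). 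Hence "the whole rhs becomes summable" is false as stated, the convergence of $(k^2 d_{k+1})_k$ does not follow, and since Step 3 needs $t_k\|x_k-x_{k-1}\|\to 0$ to pass from convergence of $\|x_{k-1}-x^\star\|$ to convergence of $\|v_k\|$, the gap propagates to claim (i) as well.

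The repair is precisely what the paper does: do not decouple the value and velocity terms. Set $W_k:=(k-1)^2 d_k+(k-1)^2\theta_k$; then \eqref{eq:recursive_vel} gives $W_{k+1}\leq W_k+q_k$ with $q_k=(2k-1)\theta_k+\tfrac{\beta^2}{2}(1+\alpha_k^2)(k+1)^2g_k^2+\text{(noise cross terms)}$, and $q_k$ is summable by $\sum_k k\theta_k<+\infty$, $\sum_k k^2 g_k^2<+\infty$, $\sup_k k\|x_k-x_{k-1}\|<+\infty$ and \eqref{eq:J0}; the problematic $(k-1)^2\theta_k$ is absorbed into $W_k$ itself. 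Lemma~\ref{lemma:A3-det} then gives convergence of $W_k$, summability of $W_k/k$ (from claims (iii) and (iv)(b) of Theorem~\ref{thm:vals_thm}) together with Lemma~\ref{lemma:Asumo} gives $W_k\to 0$, and both assertions (i) and (ii) drop out simultaneously---making your Steps 1 and 3 unnecessary.
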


\begin{proof}
Let us start from \eqref{eq:recursive_vel} and recall the notation there. Let us define:
$W_{k} :=(k-1)^{2} d_k+(k-1)^{2} \theta_{k}$, where we have for $k \geq k_0$ large enough that
\begin{align*}
W_{k+1} \leq W_k + q_k, \quad \text{where} \quad
q_k = \left(2k-1\right)\theta_k + \frac{\beta^2}{2}(1+\alpha_k^2)(k+1)^2g_k^2 + k\|x_k - x_{k-1}\| m_k .
\end{align*}
We have already shown in the proof of Theorem~\ref{thm:vals_thm}\ref{item:vel} that $\sum_{k \in \NN} q_k <+\infty$. It then follows from Lemma~\ref{lemma:A3-det} that $W_{k}$ converges as $k \rightarrow+\infty$. Theorem~\ref{thm:vals_thm}\ref{item:sumf} and \ref{item:vel} also ensure that $(W_k/k)_{k \in \NN}$ is summable. The claim then follows from Lemma~\ref{lemma:Asumo}. 
\end{proof}

\section{Stochastic Errors}\label{s:stoch}
    Our goal now is to address the case where the errors in the gradient computation become stochastic. Note that in the deterministic case of the previous section, we did not exploit any particular structure on the noise, necessitating meticulous handling of numerous cross-terms. In this section, we will focus on objectives of the form \eqref{eq:minExpecobj} and leverage random sampling to get conditional unbiasedness of gradient estimates, which will allow us to get rid of many of these cross-terms involving the noise. This assumption is reasonable and widely accepted in the stochastic optimization literature. Of course, the stochastic setting brings other challenges that must be addressed properly. We will not only provide convergence rates in expectation, which is the dominant approach in the literature, but we will also establish a.s. rates as well as other a.s. convergence guarantees.

\medskip
    
    We denote by $\left(\Omega, \Fc, \PP\right)$ our probability space. We assume that
    \begin{align*}
    \nabla f(x) = \EE_{\zeta}[\nabla F(x,\zeta)] .
    \end{align*}
    In our setting, we consider the setting where the randomness at each iteration is introduced through the stochastic gradient estimates $G^{x^-}_k$, $G^x_k$ and $G^y_k$ of $\nabla f(x_{k-1})$, $\nabla f(x_k)$ and $\nabla f(y_k)$ respectively. We denote the corresponding events as $\omega^{x^-}_k$, $\omega^x_k$ and $\omega^y_k$:
    \begin{itemize}
    \item Event $\omega^x_k$ corresponds to sampling $(\zeta^x_i)_{i \in [N^x_k]} \sim_{iid} \mu$, and taking
    \[
    G^x_ k = \frac{1}{N^x_k} \sum_{i=1}^{N^x_k} \nabla F(x_k,\zeta^x_i).
    \]
    \item Similarly, event $\omega^{x^-}_k$ corresponds to sampling $(\zeta^{x^-}_i)_{i\in [N^{x^-}_k]}\sim_{iid}\mu$, and computing
    \[
    G^{x^-}_k = \frac{1}{N^{x^-}_k} \sum_{i=1}^{N^{x^-}_k} \nabla F(x_{k-1},\zeta^{x^-}_i) .  
    \]
    \item Event $\omega^y_k$ corresponds to sampling $(\zeta^y_i)_{i \in [N_k]} \sim_{iid} \mu$, and taking
    \[
    G^y_k = \frac{1}{N^y_k} \sum_{i=1}^{N^y_k} \nabla F(y_k,\zeta^y_i) .
    \]
     \end{itemize}
    We will use the shorthand notation for the events $\omega^{x^-,x}_k=(\omega^{x^-}_k,\omega^x_k)$ and $\omega_k = (\omega^{x^-,x}_k,\omega^y_k)$. We also define the stochastic noise components as
    \begin{equation}\label{eq:grad_noise_stoch}
        \begin{aligned}
          M^{x^-}_k = G^{x^-}_k - \nabla f(x_k), \quad
          M^x_k = G^x_{k} - \nabla f(x_{k-1}), \quad
          M^y_k =  G^y_k - \nabla f(y_k) .
        \end{aligned}
    \end{equation}
    
    The filtration $\Fscr:=(\Fc_k)_{k\in\NN}$ is a sequence increasing sub-$\sigma$-algebras, where $\Fc_k = \sigma(x_i, \omega_{i-1} \mid i\le k)$. Clearly, $\Fc_k$ is the algorithm history that includes all past iterates $x_0, x_1,\dots,x_k$ and all sampling events up to iteration $k-1$.
    
    One can see that
    \[
    \EE[M^x_ k \mid \Fc_k] =  0 \qquad \text{and} \qquad
    \EE[M^{x^-}_k \mid \Fc_k] =  0.
    \]
    However, this is not the case for $M^y_k$. Indeed, $y_k$ depends on the event $\omega^{x^-,x}_k$, and thus is a random variable conditioned on $\Fc_k$, and so is $\nabla f(y_k)$. Since $M^y_k$ then ultimately depends on $M^x_k$ and $M^{x^-}_k$, conditionally on $\Fc_k$, we have
    \begin{equation}\label{eq:exp_G2}
    \begin{aligned}
    \EE[G^y_ k \mid \Fc_k] =  \frac{1}{N^y_k} \sum_{i=1}^{N^y_k} \EE_{\omega^{x^-,x}_k}[\EE_{\zeta}[\nabla F(y_k,\zeta) \mid \Fc_k,\omega^{x^-,x}_k] \mid \Fc_k] = \EE[\nabla f(y_k) \mid \Fc_k] .
    \end{aligned}
    \end{equation}
    Clearly $G^y_k$ is a biased estimate of $\EE[\nabla f(y_k) \mid \Fc_k]$.

\subsection{Stochastic IGAHD}
Generically, the stochastic inertial gradient algorithm we study, coined (S-IGAHD), is presented as Algorithm~\ref{alg:hdds}.

\begin{algorithm}[H]
\caption{Stochastic Inertial Gradient Algorithm with Hessian Driven Damping (S-IGAHD)}\label{alg:hdds}
\textbf{Input:} Initial point $x_0\in\Hc$, step-sizes $s_k \in ]0,1/L]$, parameters $(\alpha,\beta_k)$ satisfying $\alpha\ge 3$, $0<\beta_k<\frac{\sqrt{s_k}}{2}$, and sizes of the minibatches  $(N^{x^-}_k)_{k\in\NN}$, $(N^x_k)_{k\in\NN}$ and $(N^y_k)_{k\in\NN}$;
\begin{algorithmic}[1]
\For{$k=1,2,\ldots$}
\State Set $\alpha_k=1-\frac{\alpha}{k}$. 
\State Draw independent samples $(\zeta^x_i)_{i \in [N^x_k]} \sim_{iid} \mu$, $(\zeta^{x^-}_i)_{i \in [N^{x^-}_k]} \sim_{iid} \mu$.
\State Compute gradient estimates
\begin{equation}\label{eq:batches}
    G^x_ k = \frac{1}{N^x_k} \sum_{i=1}^{N^x_k} \nabla F(x_k,\zeta^x_i) \qquad G^{x^-}_k = \frac{1}{N^{x^-}_k} \sum_{i=1}^{N^{x^-}_k} \nabla F(x_k,\zeta^{x^-}_i)
\end{equation}
\State Update,
\begin{equation}\label{eq:algstepy}
y_k = x_k+\alpha_k(x_k-x_{k-1})-\beta_k \sqrt{s_k}G^x_k + \beta_k\sqrt{s_{k-1}}\left( 1 - \frac{1}{k}\right)
G^{x^-}_k
\end{equation}
\State Draw another independent sample $(\zeta^y_k)_{i \in [N^y_k]} \sim_{iid} \mu$ and compute
\begin{equation}\label{eq:batch_2}
    G^y_ k = \frac{1}{N^y_k} \sum_{i=1}^{N^y_k} \nabla F(y_k,\zeta^y_i) .
\end{equation}
\State Update,
\begin{equation}\label{eq:algstepx}
x_{k+1}=y_k- s_k G^y_k .
\end{equation}
\EndFor
\end{algorithmic}
\end{algorithm}




We will use the following shorthand notation for the quantities:
   \begin{equation}\label{eq:vars} 
   \begin{aligned}
   & \sigma_{x^-,k} = \EE[\|M^{x^-}_k\|^2 \mid \Fc_k]^{1/2}, \qquad \sigma_{x,k} = \EE[\|M^x_k\|^2 \mid \Fc_k]^{1/2},\qquad \sigma_{y,k} = \EE[\|M^y_k\|^2 \mid \Fc_k] .
        \end{aligned}  
        \end{equation}
    We insist on the fact that while $\sigma_{x^-,k}^2$ and $\sigma_{x,k}^2$ are conditional variances of $G^{x^-}_k$ and $G^x_k$ respectively, it is not the case for $\sigma_{y,k}^2$ as it contains a bias term as well. 
    
    Moreover, by independent sampling we have the following 
    \begin{align}
    & \sigma_{x,k}^2 = \frac{1}{N^x_k} \left(\EE_{\zeta}[\|\nabla F(x_k,\zeta)\|^2 \mid \Fc_k] - \norm{\nabla f(x_{k})}^2\right) \leq \frac{1}{N^x_k} \EE_\zeta[\|\nabla F(x_k,\zeta)\|^2 \mid \Fc_k] \quad \text{and} \quad \label{eq:sig1uboundNk}\\
    & \sigma_{x^-,k}^2 \leq \frac{1}{N^{x^-}_k} \EE_\zeta[\|\nabla F(x_{k-1},\zeta)\|^2 \mid \Fc_k] . \label{eq:sig1-uboundNk}
    \end{align}
    On the other hand,
     \begin{align}
    \sigma_{y,k}^2 
    &= \EE[\norm{G^y_k - \nabla f(y_k)}^2 \mid \Fc_k] \nonumber\\
    &= \EE_{\omega^{x^-,x}_k}\left[\EE\left[\norm{G^y_k - \EE\left[G^y_k \mid \Fc_k,\omega^{x^-,x}_k\right]}^2 \mid \Fc_k,\omega^{x^-,x}_k\right] \mid \Fc_k\right] \nonumber\\
    &= \EE_{\omega^{x^-,x}_k}\left[\mathrm{Var}\left[G^y_k \mid \Fc_k,\omega^{x^-,x}_k\right] \mid \Fc_k \right] \nonumber\\
    &= \frac{1}{N^y_k} \EE_{\omega^{x^-,x}_k}\left[\mathrm{Var}_\zeta\left[\nabla F(y_k,\zeta) \mid \Fc_k,\omega^{x^-,x}_k\right] \mid \Fc_k\right] \nonumber\\
    &\leq \frac{1}{N^y_k} \EE[\norm{\nabla F(y_k,\zeta)}^2 \mid \Fc_k] . \label{eq:sig2uboundNk}
    \end{align}

    We will make use of the following spaces of random sequences.
    \begin{definition}
    We denote by \( \ell^+(\Fscr) \) the set of sequences of \( [0, +\infty[ \)-valued random variables \( (a_k)_{k \in \mathbb{N}} \) such that, for each \( k \in \mathbb{N} \), \( a_k \) is \( \mathcal{F}_k \)-measurable. For \( p \in\, ]0, +\infty[ \), we define the set of \( p \)-summable random variables as:
\[
\ell^p_+(\Fscr) := \left\{ (a_k)_{k \in \mathbb{N}} \in \ell^+(\Fscr) \;\middle|\; \sum_{k \in \mathbb{N}} a_k^p < +\infty \quad \Pas \right\}.
\]

The set of non-negative \( p \)-summable deterministic sequences is denoted by \( \ell^p_+ \).
\end{definition}

\subsection{Convergence guarantees}
We start by making a slight change in notation to our Lyapunov function
\begin{equation}\label{eq:lyapunov_stoch}
V_{k} := {s_k}t_k^2 (f(x_k)-\min_{\Hc} 
 f)+\frac{1}{2} \dist(z_k, S)^2, \quad
\text{where} \quad z_k = x_{k-1} + t_{k}\left(x_k - x_{k-1} + \beta_{k-1}\sqrt{s_{k-1}}\nabla f(x_{k-1})\right) ,
\end{equation}
where $\dist(z_k, S)$ is the distance of point $z_k$ from the set of minimizers $S$. 
Moreover, we let
\begin{equation} \label{eq:M_k_stoch}\tag{$K_1$}
\begin{aligned}
M_k &:= \left(\beta_k\sqrt{s_{k}}t_{k+1}M^x_k - \beta_{k-1}\sqrt{s_{k-1}}t_k M^{x^-}_{k}\right) + s_kt_{k+1}M^y_k .
\end{aligned}
\end{equation}

Also, for convenience, we evaluate the following expression to be used later
\begin{align*}
    \EE[\|M_k\|^2 \mid \Fc_k] &= \beta_k^2s_kt_{k+1}^2\EE[\|M^x_k\|^2 \mid \Fc_k] + \beta_{k-1}^2s_{k-1}t_{k}^2\EE[\|M^{x^-}_k\|^2 \mid \Fc_k] + s_k^2t_{k+1}^2\EE[\|M^y_k\|^2 \mid \Fc_k]\\ &\quad+ 2\beta_ks_k\sqrt{s_k}t_{k+1}\EE[\langle M^x_k, M^{x^-}_k \mid \Fc_k] -2\beta_{k}\beta_{k-1}s_k\sqrt{s_{k-1}}t_kt_{k+1}\EE[\langle M^{x^-}_k, M^y_k\rangle \mid \Fc_k]
\end{align*}

Consider the term $\EE[\langle M^y_k, M^x_k\rangle \mid \Fc_k]$ which reads
\begin{equation}\label{eq:noise_cross}
\begin{aligned}
    \EE[\langle M^y_k, M^x_k\rangle \mid \Fc_k] 
    &= \EE[\langle G^y_k - \nabla f(y_k) , M^x_k \rangle \mid \Fc_k]\\
    &= \EE_{\omega^{x^-,x}_k}[\EE[\langle G^y_k - \nabla f(y_k) , M^x_k \rangle \mid \Fc_k,\omega^{x^-,x}_k] \mid \Fc_k]\\
    &= \EE_{\omega^{x^-,x}_k}[\langle \EE_\zeta[G^y_k \mid \Fc_k,\omega^{x^-,x}_k] - \nabla f(y_k) , M^x_k \rangle \mid \Fc_k] \\
    &=\EE_{\omega^{x^-,x}_k}[\langle \nabla f(y_k) - \nabla f(y_k) , M^x_k \rangle \mid \Fc_k] = 0 .
\end{aligned}
\end{equation}

Similarly, we have $\EE[\langle M^{x^-}_k, M^y_k\rangle \mid \Fc_k] = 0$. Using this, we rewrite

\begin{equation}\label{eq:norm_Mk}
\EE[\|M_k\|^2 \mid \Fc_k] = \beta_k^2s_k^2t_{k+1}^2 \EE[\|M^x_k\|^2 \mid \Fc_k] + \beta_{k-1}^2s_{k-1}^2t_{k}^2 \EE[\|M^{x^-}_k\|^2 \mid \Fc_k] +  s_k^2t_{k+1}^2\EE[\|M^y_k\|^2 \mid \Fc_k]  .
\end{equation}
We are now ready to establish the convergence properties of Algorithm~\ref{alg:hdds}.
\begin{theorem}\label{thm:conv_sto}
Assume that \eqref{eq:mainassump} holds and let $(x_k)_{k\in\NN}$ be the sequence generated by Algorithm~\ref{alg:hdds} with $\alpha \geq 3$, where \(s_k \in ]0, 1/L]\) is a nonincreasing sequence and $0 < \beta_k < \frac{\sqrt{s_k}}{2}$, and the stochastic gradient estimates are such that:
\begin{equation} \tag{$K_2$} \label{eq:k2}
\begin{aligned}
    (s_kk\sigma_{x,k})_{k \in \NN} &\in \ell^2_{+}(\Fscr), \qquad 
    (s_kk\sigma_{x^-,k})_{k \in \NN} &\in \ell^2_{+}(\Fscr), \qquad 
    (s_kk\sigma_{y,k})_{k \in \NN} &\in \ell^2_{+}(\Fscr).
\end{aligned}
\end{equation}
\begin{enumerate}[label=(\roman*)]
    \item We have the following convergence rates on the objective values: \label{item:vals_rate}
    \begin{equation}\label{eq:as_vals_rate}
        f(x_k) - \min_{\mathcal{H}} f = \mathcal{O}\left(\frac{1}{s_k k^2}\right) \text{ as } k \to \infty \quad \Pas ,
    \end{equation}
    and
    \begin{equation}\label{eq:exp_vals_rate}
    \mathbb{E}\left[f(x_k) - \min_{\mathcal{H}} f\right] \leq \frac{\frac{(\alpha-1)^2}{2} \operatorname{dist}(x_0, S)^2 + \sum_{i=1}^{k} i^2 e_i}{s_k (k-1)^2}
    \end{equation}
    where 
    \begin{equation}\label{eq:edefn}
        e_k := 4\beta_k^2s_k\sigma_{x,k}^2+ \beta_{k-1}^2s_{k-1}\sigma_{x^-,k}^2 + 4s_k^2\sigma_{y,k}^2
    \end{equation}
    
    \item \label{item:grads}
    The gradient sequence obeys the summability properties: 
    \begin{equation}\label{eq:exp_grads_sum}
    \sum_{k \in \mathbb{N}} s_k k^2\, \mathbb{E}[\|\nabla f(y_k)\|^2 \mid \mathcal{F}_k] < +\infty, ~~ \sum_{k \in \mathbb{N}} s_k k^2 \mathbb{E} [\|\nabla f(y_k)\|^2 ] < +\infty ~~ \text{and} ~~ \sum_{k \in \mathbb{N}} s_k k^2 \mathbb{E} [\|\nabla f(x_k)\|^2 ] < +\infty.
    \end{equation}
    Thus,
    \begin{equation}\label{eq:as_grads_o}
    \|\nabla f(y_k)\|^2=o\left(\frac{1}{s_k k^2}\right)\quad\Pas\quad \text{and}\quad \|\nabla f(x_k)\|^2=o\left(\frac{1}{s_k k^2}\right)\quad \Pas .
    \end{equation}

\item If, moreover, $\alpha > 3$, then
\label{item:sum_vals}
    \begin{align}\label{eq:stochres_f_k}
        & \sum_{k \in \NN} s_k k \left(f\left(x_k\right) - \min_{\Hc} f\right) < +\infty \quad \Pas . 
    \end{align}

    \item \label{item:vel}
    Suppose also that $\alpha > 3$, $s_k \equiv s \in ]0, 1/L]$, $\beta_k \equiv \beta \in ]0,\frac{\sqrt{s}}{2}[$, and \eqref{eq:k2} is replaced by
    \begin{equation} \tag{$K_1$} \label{eq:k1}
    \begin{aligned}
    \left(k\EE[\norm{M^x_k}^2]^{1/2}\right)_{k \in \NN} &\in \ell^1_{+}, \qquad 
    \left(k\EE[\norm{M^{x-}_k}^2]^{1/2}\right)_{k \in \NN} &\in \ell^1_{+}, \qquad 
    \left(k\EE[\norm{M^y_k}^2]^{1/2}\right)_{k \in \NN} &\in \ell^1_{+} .
\end{aligned}
\end{equation}
    Then
    \begin{enumerate}
    \item $\sup _{k \in \NN} k\EE[\left\|x_{k}-x_{k-1}\right\|]<+\infty$.\label{item:vel_bnd_sto}
    \item $\sum_{k \in \NN} k\EE[\left\|x_{k}-x_{k-1}\right\|^{2}]<+\infty$ and thus $k\norm{x_{k}-x_{k-1}}^2 \to 0 ~ \Pas$\label{item:vel_sum_sto}.
    \item $\EE[f(x_k) - \min_{\Hc} f] = o\left(\frac{1}{k^2}\right)$ and $\EE[\|x_k - x_{k-1}\|] = o\left(\frac{1}{k}\right)$ as $k \rightarrow +\infty$.  \label{item:valvel_fast_o}
    \end{enumerate}
    



\end{enumerate}
\end{theorem}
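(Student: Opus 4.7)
My plan is to mirror the deterministic proof of Theorem~\ref{thm:vals_thm} and Theorem~\ref{thm:fast_thm}, but exploit the zero-mean structure of the noise to close the estimates in conditional expectation (and then almost surely via Robbins--Siegmund type arguments). The starting point is a stochastic analog of Lemma~\ref{lem:lyaperr} for the new Lyapunov function $V_k$ in \eqref{eq:lyapunov_stoch}. Replaying the derivation that led to \eqref{eq:vk+1vk}, and noting that now $z_{k+1} - z_k = -s_k t_{k+1}\nabla f(y_k) - M_k$, I will expand $\tfrac12\operatorname{dist}(z_{k+1},S)^2 \leq \tfrac12\|z_{k+1}-x^\star\|^2$ via the three-point identity and then take $\EE[\cdot \mid \Fc_k]$. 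The key simplification relative to the deterministic case is that $\EE[M^x_k\mid\Fc_k]=\EE[M^{x^-}_k\mid\Fc_k]=0$ and that $\EE[\langle M^y_k,M^x_k\rangle\mid\Fc_k]=\EE[\langle M^y_k,M^{x^-}_k\rangle\mid\Fc_k]=0$ by \eqref{eq:noise_cross}, so all cross terms of the form $\langle z_k-x^\star, M_k\rangle$ and $\langle \nabla f(y_k), M^y_k\rangle$ vanish in conditional expectation. What remains is exactly the deterministic descent inequality plus the variance proxy $\EE[\|M_k\|^2\mid\Fc_k]$ given in \eqref{eq:norm_Mk}.

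Once this conditional-expectation recursion is in hand, part~\ref{item:vals_rate} follows by applying the stochastic Robbins--Siegmund lemma (the stochastic counterpart of Lemma~A.3): the assumption \eqref{eq:k2} makes all error sequences $s_k^2 t_{k+1}^2\sigma_{\bullet,k}^2$ a.s. summable, hence $V_k$ converges a.s. and is a.s. bounded, which gives \eqref{eq:as_vals_rate}. Taking full expectation of the recursion and iterating yields the explicit upper bound \eqref{eq:exp_vals_rate} with $e_k$ as in \eqref{eq:edefn}. For part~\ref{item:grads}, I keep the negative gradient term $-\tfrac{\epsilon_k}{4}s_k t_{k+1}^2\|\nabla f(y_k)\|^2$ that arose in Lemma~\ref{lem:lyaperr} (valid since $\beta_k<\sqrt{s_k}/2$), sum the recursion in conditional expectation, and use that the a.s. bounded $V_k$ absorbs the telescoping part. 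This yields the first two summability statements in \eqref{eq:exp_grads_sum}. The gradient estimate at $x_k$ follows from Lipschitz continuity of $\nabla f$ together with \eqref{eq:genalgstepx}: $\|\nabla f(x_{k+1})\|^2\le 2(1+Ls_k)^2\|\nabla f(y_k)\|^2+2L^2s_k^2\|M^y_k\|^2$. The a.s. $o(1/(s_kk^2))$ rates in \eqref{eq:as_grads_o} then come from the summability of $s_k k^2 \|\nabla f(\cdot)\|^2$ almost surely (again by Robbins--Siegmund applied to the nonnegative remainder), exactly as in the deterministic proof.

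Part~\ref{item:sum_vals} is obtained by retaining, in the same recursion, the negative term $s_k(t_{k+1}^2-t_{k+1}-t_k^2)(f(x_k)-\min_{\Hc}f)$. When $\alpha>3$ one has $t_k^2-t_{k+1}^2+t_{k+1}\ge \tfrac{\alpha-3}{(\alpha-1)^2}k$, so telescoping and using the a.s. boundedness of the Lyapunov variable together with the summability in \eqref{eq:k2} produces \eqref{eq:stochres_f_k} almost surely. For part~\ref{item:vel} under constant step-size, I follow the velocity argument in the proof of Theorem~\ref{thm:vals_thm}\ref{item:vel}: start from the descent inequality for $f_k$ at $y_k,x_k$ and expand $\|y_k-x_k\|^2$ using \eqref{eq:algstepy}. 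Rather than pathwise, I take full expectation and apply Cauchy--Schwarz in the form $\EE[\|x_k-x_{k-1}\|\,\|M^x_k-M^{x^-}_k\|]\le \EE[\|x_k-x_{k-1}\|^2]^{1/2}(\EE[\|M^x_k\|^2]^{1/2}+\EE[\|M^{x^-}_k\|^2]^{1/2})$, which is where the $\ell^1$ hypothesis \eqref{eq:k1} (stronger than \eqref{eq:k2}) is used. The resulting discrete second-order inequality on $k^2\EE[d_{k+1}]$ matches \eqref{eq:recursive_vel} with its error terms replaced by their $L^2$ norms, and Lemma~\ref{lemma:A2} gives \ref{item:vel_bnd_sto}. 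Summing the full recursion, without discarding the $(2(\alpha-1)k-\alpha^2)d_k$ term, gives \ref{item:vel_sum_sto}; the a.s. conclusion follows from monotone convergence. Finally, \ref{item:valvel_fast_o} is obtained by applying the $\mathcal{O}\to o$ upgrade (Lemma~\ref{lemma:Asumo}) to the expected Lyapunov $\EE[W_k]$ constructed as in the proof of Theorem~\ref{thm:fast_thm}, using that $(\EE[W_k]/k)$ is summable thanks to \ref{item:sum_vals} and \ref{item:vel_sum_sto}.

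The main technical obstacle is the velocity analysis in \ref{item:vel}: unlike the deterministic case, one cannot treat the noise pathwise because we only control second moments, so a careful bookkeeping is required to ensure that every cross-term generated by Young's inequality is compensated by a summable sequence coming from \eqref{eq:k1}. In particular, the stronger $\ell^1$ assumption on the standard deviations (rather than the $\ell^2$ used in \ref{item:vals_rate}--\ref{item:sum_vals}) is exactly what is needed to invoke the Gronwall--Bellman lemma in expectation, and the bias in $G^y_k$ highlighted around \eqref{eq:exp_G2} must be handled by splitting $\sigma_{y,k}$ via \eqref{eq:sig2uboundNk} so that it does not contaminate the descent of the value term.
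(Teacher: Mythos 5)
Your proposal follows essentially the same route as the paper's proof: the conditional-expectation recursion for the distance-based Lyapunov function $V_k$, cancellation of the noise cross-terms via \eqref{eq:noise_cross} and the tower property (so that $\EE[\langle v_k,M_k\rangle\mid\Fc_k]=\EE[\langle \nabla f(y_k),M^y_k\rangle\mid\Fc_k]=0$), Robbins--Siegmund for the almost sure claims in (i)--(iii), total expectation for \eqref{eq:exp_vals_rate} and the gradient summabilities (with the Lipschitz transfer from $y_k$ to $x_k$), and the expectation version of the deterministic velocity/Gronwall argument under (K$_1$) for part (iv). One small correction: the summability of $\bigl(\EE[W_k]/k\bigr)_k$ used for (iv)(c) (and of $k\,\EE[\theta_k]$ already needed for the Gronwall step in (iv)(a)) does not follow from the almost sure statement (iii); it must be obtained by taking total expectation of the value recursion under (K$_1$) (using $\sum_k k^2\EE[\|M_k^{\bullet}\|^2]\le\bigl(\sum_k k\,\EE[\|M_k^{\bullet}\|^2]^{1/2}\bigr)^2$), which is exactly what the paper does and what your expectation framework delivers anyway.
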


\noindent Before proving the result, we present a few remarks interpreting the main insight and conclusions of the theorem, which can be informative for choosing the appropriate algorithm parameters to the setting and convergence desiderata of interest. 


\begin{remark}\label{rem:conv_sto}
Unlike the deterministic case,  we have modified the Lyapunov function to make it independent of the choice of the minimizer $x^\star \in S$. The reason behind this is to make the almost sure claims independent of that choice, as otherwise, the set of events of probability one on which these claims hold would depend on $x^\star$, which may not be unique in the general case. This is an important point to keep in mind in the stochastic setting.

\medskip

When the gradients are bounded \Pas (e.g., logistic loss, or if the sequences are a priori bounded \Pas), then for \eqref{eq:k2} to hold, it follows from \eqref{eq:sig1uboundNk}, \eqref{eq:sig1-uboundNk} and \eqref{eq:sig2uboundNk} that it is sufficient to assume that $\sum_{k \in \NN} \frac{s_k^2k^2}{N_k} < +\infty$, where $N_k=\min(N^x_k,N^{x^-}_k,N^y_k)$. One can then choose a sufficiently fast increasing minibatch size and/or a step-size sequence that decreases fast enough. For instance, one needs $N_k$ to grow at least as $k^{3-\epsilon}$ if $s_k=s_0/(k+1)^{\epsilon}$ for $\epsilon \in ]0,1/2]$ and $s_0 > 0$. For $\epsilon$ small but bounded away from zero, this would give a convergence rate $\mathcal{O}(1/k^{2-\epsilon})$ on the objective values in expectation. For a smaller minibatch size growing only as $k^{2-\delta}$, $\delta \in [0,1/2]$, one has to take $s_k=s_0/(k+1)^{(1+\epsilon)/2}$ for $\epsilon \in ]\delta,1/2]$. This comes at the price of the slower convergence rate $\mathcal{O}(1/k^{(3-\epsilon)/2})$.

If the minibatch sizes and/or step-sizes do not increase/decrease fast enough, then \eqref{eq:exp_vals_rate} tells us that the objective values converge in expectation to a noise-dominated region around the optimal value, and the size of this region depends on the second moment of the errors. If the error is constant, i.e., the minibatch size is fixed to say $N$,  $\beta_k$ is set to say $\sqrt{s_k}/8$, and the gradients are bounded so that the variances are $O(1/N)$ (see \eqref{eq:sig1uboundNk}, \eqref{eq:sig1-uboundNk} and \eqref{eq:sig2uboundNk}), then one has
\[ 
\EE\left[f(x_k) - \min_{\mathcal{H}} f\right] \leq \mathcal{O}\left(\frac{1}{s_k (k-1)^2}\right) + \mathcal{O}\left(\frac{\sum_{i=0}^{k-1} (i+1)^2 s_{i}^2}{N s_k (k-1)^2}\right) .
\]
For $s_k=s_0/(k+1)$, one has
\[ 
\EE\left[f(x_k) - \min_{\mathcal{H}} f\right] \leq \mathcal{O}\left(\frac{1}{k} + \frac{1}{N}\right) .
\]
To solve the problem to accuracy $\epsilon > 0$, one can run the algorithm with $N \gtrsim \epsilon^{-1}$ for $k \gtrsim \epsilon^{-1}$ iterations.
\end{remark}

\begin{remark}[Convergence of iterates]
Unlike the deterministic case, we do not have proof of almost sure weak convergence of iterates $(x_k)_{k \in \NN}$ in the stochastic setting. The deep reason is that $y_k$ is a random variable even conditioned on the past $\Fc_k$. Consequently, we do not have a sufficient control on the summability of the gradients neither at $y_k$ nor at $x_k$ in the almost sure sense, but only in (conditional) expectation. This summability control in almost sure sense is instrumental in order to invoke Opial's lemma. In the deterministic setting, such a control allowed to prove rates and summability claims on the velocity sequence $(\|x_k - x_{k-1}\|)_{k \in \NN}$ whence we deduce the boundedness of the iterates $(x_k)_{k \in \NN}$ and were able to invoke Opial's lemma. While we have control on the velocity sequence in expectation, transferring these arguments in the almost sure sens is far from straightforward. Consequently, the weak convergence of the iterates of the stochastic Algorithm~\ref{alg:hdds} remains an open question that we leave to a future work.
\end{remark}

\begin{proof}

Denote
\[
v_k = z_k - x^\star ,
\]
where $x^\star$ is the closest element in $S$ to $z_k$. We first start from \eqref{eq:lyapmainineq} and using the definition of $E_k$ with a slight change of notation using $M^{x^-}_k$ instead of $M^x_{k-1}$ as this noise component in the stochastic setting is not carried over from past iterate but is due to sampling at the current time. We get
\begin{align*}
s_{k+1}t_{k+1}^2(f(x_{k+1})& - \min_{\Hc} f) - s_{k}t_{k}^2(f(x_{k}) - \min_{\Hc} f) + \frac{1}{2}\|v_{k+1}\|^2 - \frac{1}{2}\|v_{k}\|^2+ \langle v_{k+1}, M_k\rangle
\leq \\
&s_k\left(t_{k+1}^2 - t_{k+1} - t_{k}^2 \right)\left( f(x_k) - \min_{\Hc} f\right)- \frac{\epsilon_k}{2}s_kt_{k+1}^2 \left\|\nabla f(y_k)\right\|^2 \\
&- s_kt_{k+1}\left\langle  \left(\beta_k\sqrt{s_{k}}t_{k+1}M^x_k - \beta_{k-1}\sqrt{s_{k-1}}t_k M^{x^-}_{k}\right), \nabla f(y_k) \right\rangle \\
&- s_kt_{k+1}\left\langle  \left(\beta_k\sqrt{s_{k}}t_{k+1}M^x_k - \beta_{k-1}\sqrt{s_{k-1}}t_k M^{x^-}_{k}\right), M^y_k \right\rangle .
\end{align*}

Now, using the fact 
\[
\dist(z_{k+1}, S) \leq \norm{v_{k+1}} \quad \text{and} \quad \dist(z_k, S) = \norm{v_k}
\]
and the definition of $V_k$, we obtain
\begin{align*}
V_{k+1} - V_{k} +\langle v_{k+1}, M_k\rangle
&\leq s_k\left(t_{k+1}^2 - t_{k+1} - t_{k}^2 \right)\left( f(x_k) - \min_{\Hc} f\right)
- \frac{\epsilon_k}{2}s_kt_{k+1}^2 \left\|\nabla f(y_k)\right\|^2 \\
&- s_kt_{k+1}\left\langle  \left(\beta_k\sqrt{s_{k}}t_{k+1}M^x_k - \beta_{k-1}\sqrt{s_{k-1}}t_k M^{x^-}_{k}\right), \nabla f(y_k) \right\rangle \\
&- s_kt_{k+1}\left\langle  \left(\beta_k\sqrt{s_{k}}t_{k+1}M^x_k - \beta_{k-1}\sqrt{s_{k-1}}t_k M^{x^-}_{k}\right), M^y_k \right\rangle .
\end{align*}

Replacing $v_{k+1}$ by $v_k - s_kt_{k+1}\nabla f(y_k) - M_k$ from \eqref{eq:vk+1vk}, and using the definition of $M_k$ on the rhs, we get
\begin{align*}
V_{k+1} - V_{k} +\langle v_k, M_k\rangle -s_kt_{k+1}\langle \nabla f(y_k), M_k\rangle -\norm{M_k}^2
&\leq s_k\left(t_{k+1}^2 - t_{k+1} - t_{k}^2 \right)\left( f(x_k) - \min_{\Hc} f\right)\\
&- \frac{\epsilon_k}{2}s_kt_{k+1}^2 \left\|\nabla f(y_k)\right\|^2 \\
&- s_kt_{k+1}\left\langle  M_k - s_kt_{k+1}M^y_k, \nabla f(y_k) \right\rangle\\
&- s_kt_{k+1}\left\langle  \left(\beta_k\sqrt{s_{k}}t_{k+1}M^x_k - \beta_{k-1}\sqrt{s_{k-1}}t_k M^{x^-}_{k}\right), M^y_k \right\rangle .
\end{align*}

Using the definition of $M_k$ from \eqref{eq:M_k_stoch} and simplifying, we get
\begin{align*}
V_{k+1} - V_{k} 
&\leq s_k\left(t_{k+1}^2 - t_{k+1} - t_{k}^2 \right)\left( f(x_k) - \min_{\Hc} f\right)- \frac{\epsilon_k}{2}s_kt_{k+1}^2 \left\|\nabla f(y_k)\right\|^2 \\
& \quad+s_k^2t_{k+1}^2\left\langle M^y_k, \nabla f(y_k) \right\rangle - \langle \beta_k\sqrt{s_k}t_{k+1} M^x_k, s_kt_{k+1}M^y_k\rangle + \langle \beta_{k-1}\sqrt{s_{k-1}}t_{k} M^{x^-}_k, s_kt_{k+1}M^y_k\rangle\\
&\quad+ \| \beta_k \sqrt{s_{k}} t_{k+1} M^x_k - \beta_{k-1} \sqrt{s_{k-1}}t_{k}M^{x^-}_k + s_k t_{k+1} M^y_k \|^2 -\langle v_k, M_k\rangle
\end{align*}

Taking expectation conditionally on $\Fc_k$ gives
\begin{equation}\label{eq:cond_exp_Vk}
\begin{aligned}
    \EE[V_{k+1}\mid \Fc_k] &\leq V_k + s_k\left(t_{k+1}^2 - t_{k+1} - t_{k}^2 \right)\left( f(x_k) - \min_{\Hc} f\right) -\frac{\epsilon}{2}s_kt_{k+1}^2\EE[\|\nabla f(y_k)\|^2 \mid \Fc_k] \\
    &\quad + s_kt_{k+1}^2\EE[\langle M^y_k, \nabla f(y_k) \rangle \mid \Fc_k] - \beta_k s_k\sqrt{s_k}t_{k+1}^2 \EE[\langle M^x_k, M^y_k\rangle \mid \Fc_k]\\
    &\quad+ \beta_k\beta_{k-1} s_k\sqrt{s_{k-1}}t_{k+1}t_k \EE[\langle M^{x^-}_k, M^y_k\rangle \mid \Fc_k]\\
    &\quad+\EE[\| M_k \|^2 \mid \Fc_k] - \EE[\langle v_k, M_k \rangle \Fc_k]
\end{aligned}
\end{equation}

Using \eqref{eq:noise_cross}, we can write $\EE[\langle M^x_k, M^y_k\rangle \mid \Fc_k]=0$ and $\EE[\langle M^{x^-}_k, M^y_k\rangle \mid \Fc_k]=0$. Moreover, using the expression for $\EE[\|M_k\|^2 \mid \Fc_k]$ from \eqref{eq:norm_Mk}, we can write
%
\begin{align*}
    \EE[V_{k+1}\mid \Fc_k] &\leq V_k + s_k\left(t_{k+1}^2 - t_{k+1} - t_{k}^2 \right)\left( f(x_k) - \min_{\Hc} f\right) -\frac{\epsilon}{2}s_kt_{k+1}^2\EE[\|\nabla f(y_k)\|^2 \mid \Fc_k] \\
    &\quad + s_kt_{k+1}^2\EE[\langle M^y_k, \nabla f(y_k) \rangle \mid \Fc_k] + \beta_k^2s_k^2t_{k+1}^2 \EE[\|M^x_k\|^2 \mid \Fc_k] + \beta_{k-1}^2s_{k-1}^2t_{k}^2 \EE[\|M^{x^-}_k\|^2 \mid \Fc_k]\\
    &\quad+ s_k^2t_{k+1}^2 \EE[\|M^y_k\|^2 \mid \Fc_k]
\end{align*}
In the above inequality, we expanded the term $\|M_k\|^2$ and used the fact that $\EE[\langle M^x_k, M^{x^-}_k \rangle \mid \Fc_k]=0$, and that $\EE[\langle v_k, M_k \rangle \mid \Fc_k]=0$ since, conditionally on $\Fc_k$, $v_k$ is deterministic and $M_k$ is zero-mean.

We then have that
\begin{align*}
    \EE[\left\langle  M^y_k, \nabla f(y_k) \right\rangle \mid \Fc_k] 
    &= \EE[\left\langle  G^y_k, \nabla f(y_k) \right\rangle \mid \Fc_k] - \EE[\norm{\nabla f(y_k)}^2 \mid \Fc_k] \\
    &= \EE_{\omega^{x^-,x}_k}\left[\EE\left[\left\langle  G^y_k, \nabla f(y_k) \right\rangle \mid \Fc_k,\omega^{x^-,x}_k\right] \mid \Fc_k\right] - \EE[\norm{\nabla f(y_k)}^2 \mid \Fc_k] \\
    &= \EE_{\omega^{x^-,x}_k}\left[\left\langle \EE_\zeta\left[G^y_k \mid \Fc_k,\omega^{x^-,x}_k\right], \nabla f(y_k) \right\rangle \mid \Fc_k\right] - \EE[\norm{\nabla f(y_k)}^2 \mid \Fc_k] \\
    &= \EE[\norm{\nabla f(y_k)}^2 \mid \Fc_k] - \EE[\norm{\nabla f(y_k)}^2 \mid \Fc_k] = 0 .
\end{align*}




Observing that the assumption \eqref{eq:k1} implies $t_{k+1}\leq2t_k$, and using \eqref{eq:k2}, and applying the definition of $e_k$ ~\eqref{eq:edefn}, we arrive at
\begin{equation}\label{eq:stoch_iter}
\EE[V_{k+1} \mid \Fc_k] \leq V_{k}
+ s_k\left(t_{k+1}^2 - t_{k+1} - t_{k}^2 \right)\left( f(x_k) - \min_{\Hc} f\right)- \frac{\epsilon_k}{2}s_kt_{k+1}^2 \EE[\left\|\nabla f(y_k)\right\|^2 \mid \Fc_k] + t_{k+1}^2e_k .
\end{equation}
Using \eqref{eq:k1}, the second term of rhs in \eqref{eq:stoch_iter} is nonpositive and can then be dropped which entails
\begin{align*}
\EE[V_{k+1}\mid \Fc_k] &\leq V_{k}
- \frac{\epsilon_k}{2}s_kt_{k+1}^2 \EE[\left\|\nabla f(y_k)\right\|^2 \mid \Fc_k] + t_{k+1}^2e_k .
\end{align*}
Now thanks to the second part of \eqref{eq:k2} and the fact that $V_k$ is bounded from below, we are in position to apply Lemma~\ref{lemma:A3} to say that $V_k$ converges \Pas, and in particular it is bounded, i.e., there exists $[0, \infty[$-valued random variable $\nu$ such that $\sup_{k\in \NN}V_k \le \nu<+\infty$ \Pas. Therefore, for all $k \in \NN$ 
\begin{equation}\label{eq:value_rates_stoch}
\begin{aligned}
s_k^2t_k^2\left(f(x_k) - \min f\right) \leq V_k \leq \nu < +\infty \quad \Pas ,
\end{aligned}
\end{equation}
which shows \eqref{eq:as_vals_rate}. Lemma~\ref{lemma:A3} also subsequently shows that 
\begin{equation}\label{eq:grad_rates_stoch}
\begin{aligned} \sum\limits_{k \in \NN}s_kt_{k+1}^2\EE[\|\nabla f(y_k)\|^2 \mid \Fc_k]  < +\infty  \quad \Pas .
\end{aligned}
\end{equation}
which is the first part of \eqref{eq:exp_grads_sum}. Now going back to \eqref{eq:stoch_iter} and taking the total expectation, we get
\begin{equation}\label{eq:expec_grad_rates_stoch}
\begin{aligned} 
\sum\limits_{k \in \NN}s_kt_k^2\EE[\|\nabla f(y_k)\|^2] \leq \sum\limits_{k \in \NN}s_kt_{k+1}^2\EE[\|\nabla f(y_k)\|^2]  < +\infty ,
\end{aligned}
\end{equation}
and
\begin{align*}
s_{k}^2t_{k}^{2} \EE\left[f\left(x_{k}\right)-\min_{\Hc} f\right] 
\leq \EE\left[V_{k}\right] \leq V_{0} + \sum \limits_{i=0}^kt_{i+1}^{2} e_{i} 
\leq s_0t_{0}^{2} (f\left(x_{0}\right)-\min_{\Hc} f) + \frac{1}{2} \dist(x_0,S)^2 + \sum\limits_{i=0}^{k} t_{i+1}^{2} e_{i} ,
\end{align*}
where we used the fact that $s_k$ is non-increasing and $x_{0} = x_{-1}$, $s_{-1}=0$ and $\alpha_0=0$. 
This shows \eqref{eq:exp_vals_rate} and the second part of \eqref{eq:exp_grads_sum}. For the third summability claim of  \eqref{eq:exp_grads_sum}, we start using Lipschitz continuity of $\nabla f$ and \eqref{eq:algstepx} to see that
\[
\norm{\nabla f(x_{k+1})}^2 \leq 2(1+Ls_k)^2\norm{\nabla f(y_k)}^2 + 2L^2s_k^2\norm{M^y_k}^2 .
\] 
Taking expectation on both sides and multiplying by $s_kt_{k+1}^2$, we get
\begin{equation}\label{eq:exp_as}
\begin{aligned}
s_kt_{k+1}^2\EE[\norm{\nabla f(x_{k+1})}^2] \leq 2(1+Ls_k)^2s_kt_{k+1}^2\EE[\norm{\nabla f(y_k)}^2] + 2L^2s_0s_k^2t_{k+1}^2\EE\left[\|M^y_k\|^2\right] .
\end{aligned}
\end{equation}
The first term on the right hand side is summable (see \eqref{eq:expec_grad_rates_stoch}). The second one is also summable using the tower property of conditional expectation, and Fubini-Tonelli's Theorem together with \eqref{eq:k2}. Since $s_k$ is nonincrasing, we get the claim.

%
%
In turn, \eqref{eq:as_grads_o} follows from \eqref{eq:exp_grads_sum} and Lemma~\ref{lemma:meanas}. 

To prove \eqref{eq:stochres_f_k}, we start from \eqref{eq:stoch_iter} and use that when $\alpha > 3$, then $t_{k+1}^2 - t_k^2 \leq m t_{k+1}$ with $m \in ]0,1[$. Thus dropping the negative term $- \frac{\epsilon_k}{2}s_kt_{k+1}^2 \EE[\left\|\nabla f(y_k)\right\|^2 \mid \Fc_k]$,  we have
\begin{equation}\label{eq:stoch_iter_2}
\EE\left[V_{k+1} \mid \Fc_k\right] \leq V_{k} - s_k(1-m) t_{k+1}\left( f(x_k) - \min_{\Hc} f\right) + t_{k+1}^2e_k .
\end{equation}
We can again invoke Lemma~\ref{lemma:A3} to get the desired claim.

We now turn to proving the claims of \ref{item:vel}. We embark from \eqref{eq:dkineq} that we write in the stochastic setting as
\begin{align}
\frac{K^2}{2} d_{K+1} 
&\leq (k_0 - 1)^2 \theta_{k_0} + (k_0 - 1)^2 d_{k_0} + 2 \sum\limits_{k=k_0}^K k \theta_k +  \beta^2 \sum\limits_{k=k_0}^K (k+1)^2 g_k^2 
+ \sum\limits_{k=k_0}^K m_k k\|x_k - x_{k-1}\| 
+ s^4K^2\norm{M^y_k}^2 , \label{eq:dkineqsto}
\end{align}
where $m_k := \sqrt{s}\beta\alpha_k k \left(\|{M^x_k}\| + \|{M^{x^-}_k}\|\right) + s^2k\|{M^y_{k-1}}\|$, $g_k := \sqrt{s}\left(\|G^x_k\| + \|G^{x^-}_k\|\right)$. Taking the full expectation on both sides and using H\"older's inequality, we have
\begin{align}
\frac{K^2}{2} \EE[d_{K+1}]
&\leq (k_0 - 1)^2 \theta_{k_0} + (k_0 - 1)^2 d_{k_0} + 2 \sum\limits_{k=k_0}^K k \EE[\theta_k] +  \beta^2 \sum\limits_{k=k_0}^K (k+1)^2 \EE[g_k^2] \nonumber\\
&+ \sum\limits_{k=k_0}^K \EE[m_k^2]^{1/2} k\EE[\|x_k - x_{k-1}\|^2]^{1/2} 
+ s^4K^2\EE[\norm{M^y_k}^2] . \label{eq:dkineqsto}
\end{align}
Taking the full expectation in \eqref{eq:stoch_iter_2} we have
\begin{align}\label{eq:stoch_iter_exp}
\EE[V_{k+1}] \leq \EE[V_{k}] - s(1-m) t_{k+1}\EE[\theta_k] + t_{k+1}^2 (4\beta^2s\EE[\|M^x_k\|^2] + \beta^2s\EE[\|M^{x^-}_k\|^2] + 4s^2\EE[\|M^y_k\|^2]) .
\end{align}
We have
\[
\sum_{k=0}^Kt_{k+1}^2\EE[\|M^x_k\|^2] \leq \left(\sum_{k=0}^Kt_{k+1}\EE[\|M^x_k\|^2]^{1/2}\right)^{2},
\]
and thus the last term in the rhs of \eqref{eq:stoch_iter_exp} is summable thanks to \eqref{eq:k1}. It then follows that $(k\EE[\theta_k])_{k \in \NN} \in \ell^1_+$. We also have
\[
g_k^2 \leq 4\left(\norm{\nabla f(x_k)}^2 + \norm{\nabla f(x_{k-1})}^2 + \norm{\nabla f(x_k)}^2 + \|M^x_k\|^2 + \|M^{x^-}_k\|^2\right) .
\]
Arguing as above for the error terms and using \eqref{eq:exp_grads_sum}, we infer that $(k^2\EE[g_k^2])_{k \in \NN} \in \ell^1_+$. Therefore, there exists a constant $C_1 > 0$ such that
\begin{align*}
(K\EE[\|x_{K+1} - x_K\|^2]^{1/2})^2
&\leq 4C_1 + 4\sum\limits_{k=k_0}^K \EE[m_k^2]^{1/2} k\EE[\|x_k - x_{k-1}\|^2]^{1/2} . 
\end{align*}
We have
\[
\EE[m_k^2]^{1/2} \leq C_2\left( k\EE[\|M^x_k\|^2]^{1/2} + k\EE[\|M^{x^-}_k\|^2]^{1/2} + k\EE[\|M^y_k\|^2]^{1/2}\right) ,
\]
for a constant $C_2 > 0$, and thus $(\EE[m_k^2]^{1/2})_{k \in \NN} \in \ell^1_+$ thanks to \eqref{eq:k1}. We are then in position to invoke Lemma~\ref{lemma:A2} and Jensen's inequality to get \ref{item:vel_bnd_sto}. Let $C_3 < +\infty$ the uniform bound in \ref{item:vel_bnd_sto}.

We now argue similarly to the proof of Theorem~\ref{thm:vals_thm}\ref{item:vel_second}. Let $W_{k} :=(k-1)^{2} \EE[d_k]+(k-1)^{2} \EE[\theta_{k}]$. Starting from \eqref{eq:recursive_vel}, taking the full expectation, we have
\begin{align*}
W_{k+1} + \left(2(\alpha-1)k - \alpha^2\right)\EE[d_k] 
\leq W_k + q_k,
\end{align*}
where $q_k = 2k\EE[\theta_k] + \beta^2(k+1)^2\EE[g_k]^2 + C_3\EE[m_k^2]^{1/2} + C_3s^2k\EE[\norm{M^y_k}^2]^{1/2}$. For $k \geq k_0$ large enough, the second term on the lhs is nonnegative. We have already shown above that $\sum_{k \in \NN} q_k <+\infty$ thanks to \eqref{eq:k1}. Applying Lemma~\ref{lemma:A3-det} shows that \ref{item:vel_sum_sto} holds and $W_{k}$ converges as $k \rightarrow+\infty$. We have already shown that $(k\EE[\theta_k])_{k \in \NN} \in \ell^1_+$. This together with claim~\ref{item:vel_sum_sto} ensure that $(W_k/k)_{k \in \NN}$ is summable. Upon invoking Lemma~\ref{lemma:Asumo}, we get that $W_k \to 0$ when \ref{item:valvel_fast_o} follows immediately.
\end{proof}

\section{Numerical Illustrations}\label{s:num}
In this section, we compare three algorithms: our algorithm with $\beta > 0$ (S-IGAHD), $\beta=0$ (the stochastic version of Nesterov's method, S-FISTA), and the stochastic version of HBF (S-HBF). These algorithms were applied to machine learning problems involving the minimization of convex population risk for classification and regression. Randomness is brought by sampling batches (of increasing size) and evaluating the gradient on these batches. 

\subsection{Problem Specifications}

\subsubsection{Classification}
Each data sample consists of a six-dimensional input feature vector 
$\Xb$ drawn from a multivariate normal distribution:
\begin{equation}\label{eq:X_class}
\Xb \sim \mathcal{N}(m, \Sigma),
\end{equation}
where 
$m$ and $\Sigma$ denote the mean vector and covariance matrix, respectively. Each sample is assigned a binary output label $\Yb$ taking values in $\{0,1\}$ sampled from a distribution governed by some theoretical unknown function that relates $Y = f(MX)$.

The classification model predicts the probability of the positive class using a linear transformation followed by a sigmoid activation function:
\begin{align}
h(A,\Xb) = \sigma(A \Xb),
\end{align}
where $A$ is a learnable weight matrix, and  $\sigma(\cdot)$ denotes the sigmoid function,
\begin{align}
\sigma(z) = \frac{1}{1 + e^{-z}}.
\end{align}

The population loss, which represents the expected classification loss over the true data distribution, is given by
\begin{equation}
\mathcal{R}_{\text{pop}}(A) = \mathbb{E}_{{(\Xb, \Yb) \sim \mu}} \left[ - \Yb \log h(A,\Xb) - (1 - \Yb) \log (1 - h(\Xb)) \right],
\end{equation}
where 
$\mu$ denotes the joint distribution of $\zeta=(\Xb,\Yb)$.

In practice, we approximate this expectation using an empirical loss over a finite dataset of $N$ samples, which serves as a discrete approximation of the population loss:
\begin{equation}\label{eq:class_loss}
\begin{aligned}
\mathcal{R}_{\text{class}}(A) &= \mathbb{E}_{{(\Xb, \Yb)} \sim \mu_{N}} \left[ - \Yb \log h(A,\Xb) - (1 - \Yb) \log (1 - h(A,\Xb)) \right] \\
&= -\frac{1}{N} \sum_{i=1}^{N} \left[ \Yb_i \log h(A,\Xb_i) + (1 - \Yb_i) \log (1 - h(\Xb_i)) \right],
\end{aligned}
\end{equation}
where $\mu_{N}$ is the empirical distribution supported on $N$ atoms based on the observed dataset.

    




\subsubsection{Regression}
    Each data sample consists of a six-dimensional input feature vector \( \Xb \) drawn from a multivariate normal distribution,
    \begin{equation}\label{eq:X_reg}
        \Xb \sim \mathcal{N}(m, \Sigma),
    \end{equation}
    The corresponding output observation is generated through a linear mapping,
    \begin{align*}
        \Yb = M \Xb,
    \end{align*}
    where \( M \) is a fixed regression coefficient matrix.
    
    To estimate the underlying relationship, we employ a linear regression model, predicting outputs as
    \begin{align*}
        \widehat{\Yb} = A \Xb,
    \end{align*}
    where \( A \) is a learnable weight matrix.
    
    The regression loss is formulated as the expected squared error under the data distribution,
    \begin{equation}\label{eq:reg_pop_loss}
        \mathcal{R}_{\text{pop}}(A) = \mathbb{E}_{(\Xb, \Yb) \sim \mu} \left[ \| A \Xb - \Yb \|^2 \right],
    \end{equation}
    where \( \mu \) represents the true underlying data distribution.
    
    In practice, this expectation is approximated using an empirical loss over a finite dataset of \( N \) samples, which serves as a discrete approximation of the population loss:
    \begin{equation}\label{eq:reg_loss}
        \mathcal{R}_{\text{reg}}(A) = \frac{1}{N} \sum_{i=1}^{N} \| A \Xb_i - \Yb_i \|^2,
    \end{equation}


\subsection{Training Specifications}
The minibatch sizes start at 2 and increase with iterations \(k\) as $N_{k} = 2 k^2$. For S-FISTA and {S-IGAHD}, we set \(\alpha = 3.1\), and for S-HBF, which has a constant damping factor, we use $\alpha = 0.1$. For all algorithms, we used a decreasing step-size strategy with $s_k = \frac{s_0}{k^{0.6}}$. The gradual increase of $N_k$ along with the decreasing step-size ensures the summability assumption in \eqref{eq:k2}; see the discussion in Remark~\ref{rem:conv_sto}. For S-IGAHD, the parameter \(\beta_k\) is taken very close, but strictly smaller than $\sqrt{s_k}/2$. Training is carried out over 200 epochs for both classification and regression problems.

\subsection{Results}
\begin{figure}[!ht]
    \centering
    \begin{subfigure}{0.45\textwidth}
        \includegraphics[width=\textwidth]{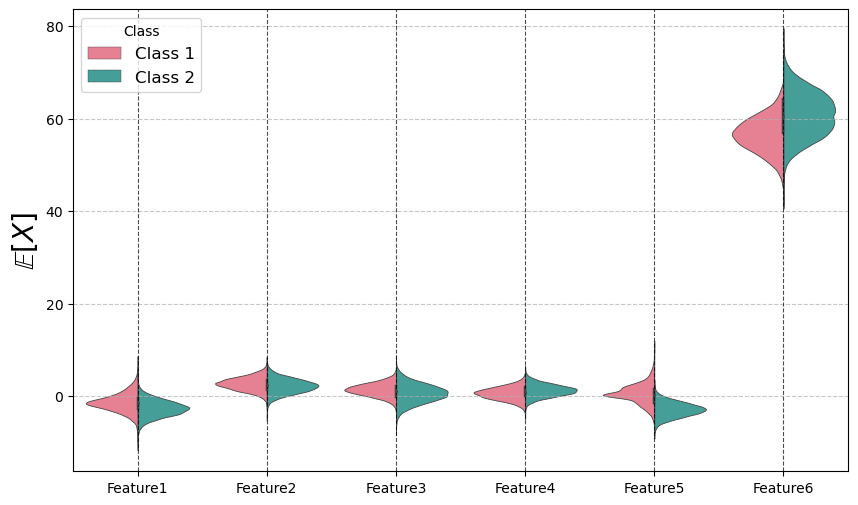}
        \caption{}
        \label{fig:ip_features1_subfig1}
    \end{subfigure}%
    \hfill
    \begin{subfigure}{0.45\textwidth}
        \includegraphics[width=\textwidth]{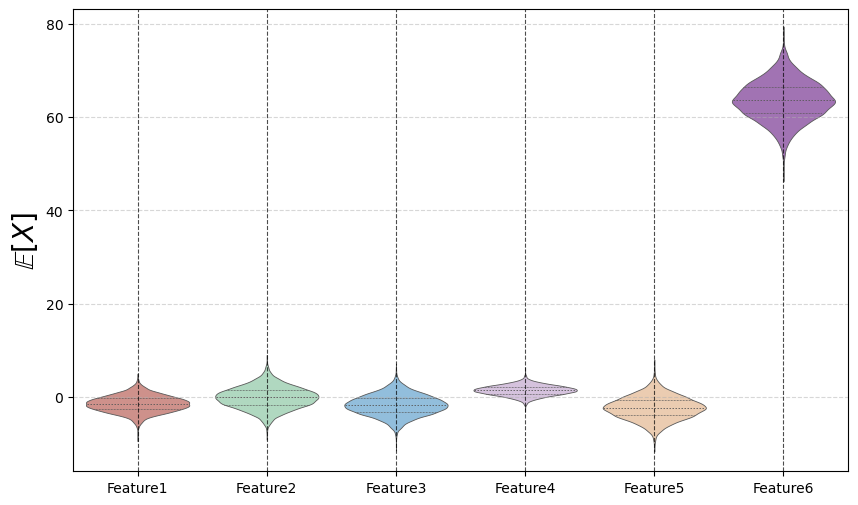}
        \caption{}
        \label{fig:ip_features2_subfig2}
    \end{subfigure}
    \caption{Plots showing representative distribution of individual features for (a) classification (b) regression problem where the full dataset is sampled from a multivariate Gaussian where feature 6 is disproportionally scaled w.r.t. rest of the features.}
    \label{fig:ip_data} 
\end{figure}

\begin{figure}[!ht]
    \centering
    \begin{subfigure}{0.45\textwidth}
        \includegraphics[width=\textwidth]{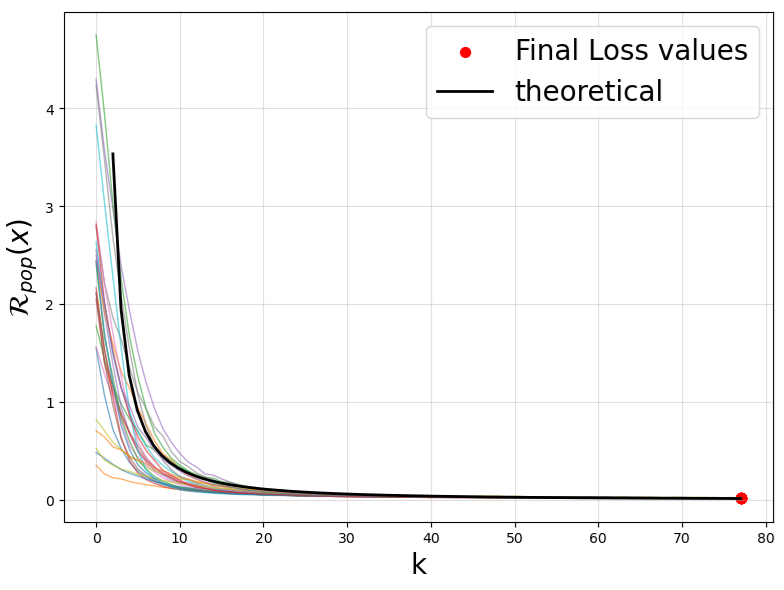}
        \caption{}
        \label{fig:IGAHD_conv_subfig1}
    \end{subfigure}%
    \hfill
    \begin{subfigure}{0.45\textwidth}
        \includegraphics[width=\textwidth]{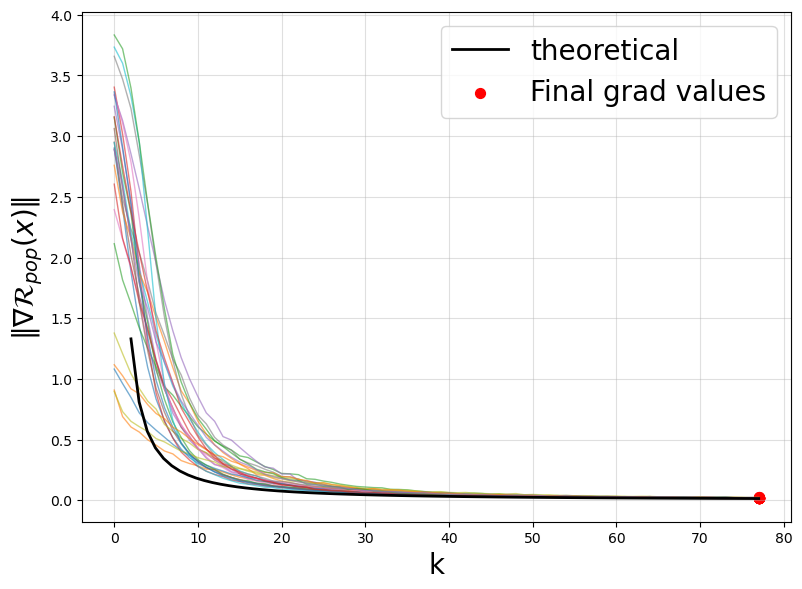}
        \caption{}
        \label{fig:IGAHD_conv_subfig2}
    \end{subfigure}
    \caption{The plots show  (a) the evolution of the sequence objective function which is the population risk over the entire distribution. (b) the gradient values for 25 randomly sampled initial weight vectors between (-1, 1) of dimension (6, 1) where 6 is the number of input features.}
    \label{fig:IGAHD} 
\end{figure}

In Figure~\ref{fig:IGAHD}, we observe that for all 25 independently and randomly initialized weight vectors, the population loss values converge to the minimal value. This behavior is consistent across all runs, with both the population loss and its gradient decreasing monotonically during training. 

Now we see the convergence rates for the three algorithms for a problem with an ill-conditioned objective.  Here we scale one of the features such that $\lambda_{\max}/\lambda_{\min} \sim 1000$, where $\lambda_{\max}$ and $\lambda_{\min}$ are the largest and smallest eigenvalues of the Hessian of feature matrix respectively. We show the trend of population loss and gradient of population loss along with the proven theoretical bounds. We can see that S-IGAHD with $\beta > 0$ converges faster and with less violent oscillations in population loss and gradients of population loss which is consistent with the observations in \cite{ACFR} (see also Section~\ref{subsec:role_beta} for more details). We see an even more pronounced trend in Figure~\ref{fig:comparison_reg} where S-IGAHD converges in objective values and gradients according to the theoretical rates after a few initial oscillations, whereas the other two algorithms fail to converge at all within the allotted training period.
\begin{figure}[!ht]
    \centering
    \begin{subfigure}{0.45\textwidth}
        \includegraphics[width=\textwidth]{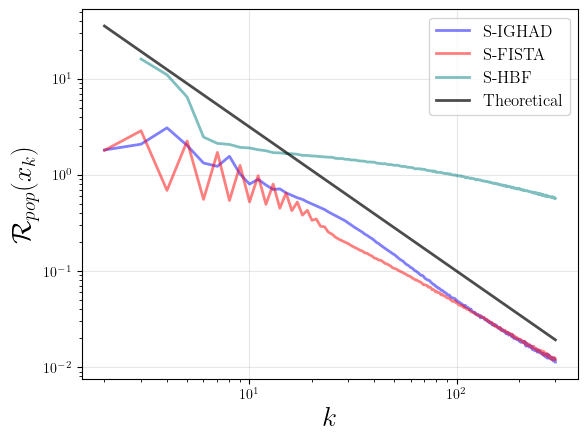}
        \caption{}
        \label{fig:comp_conv_subfig1}
    \end{subfigure}%
    \hfill
    \begin{subfigure}{0.45\textwidth}
        \includegraphics[width=\textwidth]{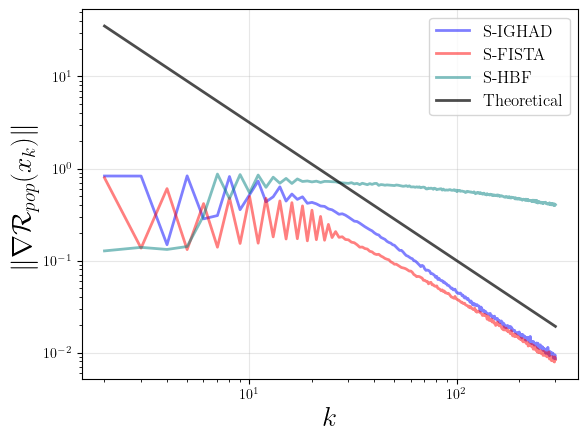}
        \caption{}
        \label{fig:comp_conv_subfig2}
    \end{subfigure}
    \caption{For the classification problem: (a) convergence of objective values (population risk) for the three algorithms, and (b) the gradient values such that $\lambda_{\max}/\lambda_{\min} \sim 1000$, along with the theoretical bounds proven above for S-IGAHD.}
    \label{fig:comparison_class} 
\end{figure}

\begin{figure}[!ht]
    \centering
    \begin{subfigure}{0.45\textwidth}
        \includegraphics[width=\textwidth]{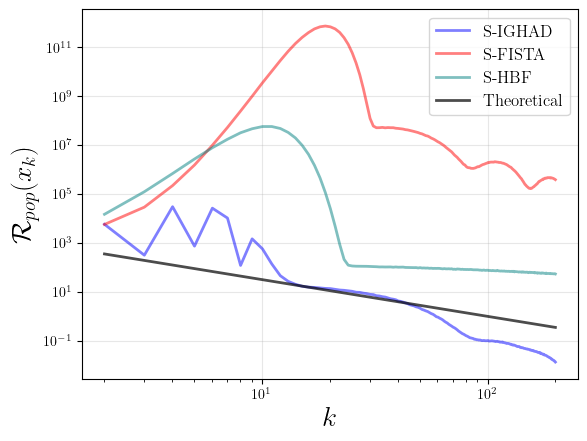}
        \caption{}
        \label{fig:comp_conv_subfig_reg_1}
    \end{subfigure}%
    \hfill
    \begin{subfigure}{0.45\textwidth}
        \includegraphics[width=\textwidth]{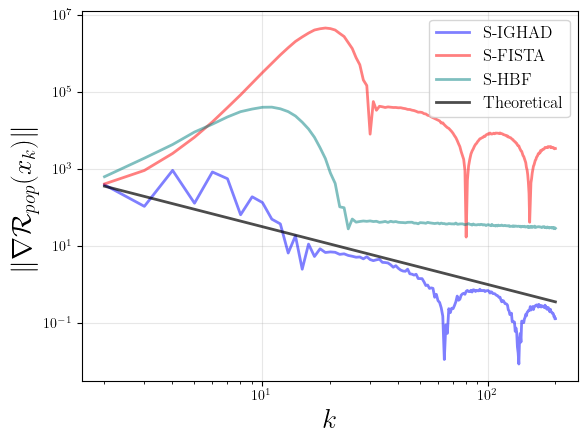}
        \caption{}
        \label{fig:comp_conv_subfig_reg_2}
    \end{subfigure}
    \caption{For the regression problem: (a) convergence of objective values (population risk) for the three algorithms, and (b) gradient values 
    such that $\lambda_{\max}/\lambda_{\min} \sim 1000$, along with the theoretical bounds proven above for S-IGAHD.}
    \label{fig:comparison_reg} 
\end{figure}

\section{Conclusion}\label{s:conc}
It has been observed in the smooth deterministic and exact setting that incorporating a damping term driven by the Hessian into inertial first-order optimization algorithms can significantly reduce oscillations and fast convergence of the objective and gradients, which works particularly well for ill-conditioned objectives. This is in stark contrast with other accelerated first-order methods, which often exhibit persistent oscillatory behavior under such conditions. In this work, we extend these insights to the inexact case, both in the deterministic and stochastic settings. Ouur algorithm is first-order and does not involve computing the Hessian explicitly whatsoever. Our results demonstrate the algorithm’s practical utility in machine learning applications, particularly in settings where ill-conditioning is prevalent, such as in convex regression problems.
We prove theoretical fast convergence rates on the objective, the gradient and velocities, both in the deterministic and stochastic settings, provided that the errors vanish fast enough. This holds for instance in the stochastic case by balancing the choice of the step-size sequence and that of the minibatch size sequence. While we also prove convergence of the iterates themselves in the deterministic case, extending such result to the stochastic case is much more challenging due to intricate dependencies between the gradient estimates. Our work lays the foundation for further exploration of Hessian-driven first-order methods in noisy or data-driven optimization, particularly in large-scale machine learning problems where ill-conditioning is intrinsic. Beyond proving convergence of the iterates, future work may include designing adaptive variants of the algorithm and variance-reduced versions for finite sums.

\section*{Acknowledgements}{HC and VK acknowledge support from the Czech National Science Foundation under Project 24-11664S}

\bibliographystyle{plain}
\bibliography{refs}

\appendix
\renewcommand{\thesection}{A}  
\section*{Auxiliary Lemmas A}
\begin{lemma}[Extended Descent Lemma]\label{lemma:A1} 
Let $g: \Hc \rightarrow \RR$ be a convex function whose gradient is $L$-Lipschitz continuous. Let $s \in [0, 1/L]$. Then for all $(x, y) \in \Hc^{2}$, we have
\[
g(y - s \nabla g(y)) \leq g(x) + \langle \nabla g(y), y - x \rangle - \frac{s}{2} \|\nabla g(y)\|^{2} - \frac{s}{2} \|\nabla g(x) - \nabla g(y)\|^{2}.
\]
\end{lemma}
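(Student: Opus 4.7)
\textbf{Proof proposal for Lemma~\ref{lemma:A1}.} The plan is to combine two standard facts about $L$-smooth convex functions: (i) the classical descent lemma, applied along the gradient step from $y$, and (ii) the strengthened convexity inequality (co-coercivity in disguise) that controls the gap between $g(y)$ and $g(x)$ by the norm of the gradient difference.

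First, since $\nabla g$ is $L$-Lipschitz, the standard descent lemma gives
\[
g(u) \leq g(v) + \langle \nabla g(v), u - v \rangle + \frac{L}{2}\|u-v\|^2 \qquad \forall\, u,v \in \Hc.
\]
Applying this with $v = y$ and $u = y - s\nabla g(y)$ yields
\[
g(y - s\nabla g(y)) \leq g(y) - s\bigl(1 - \tfrac{Ls}{2}\bigr)\|\nabla g(y)\|^2 \leq g(y) - \frac{s}{2}\|\nabla g(y)\|^2,
\]
where the last step uses $s \leq 1/L$, hence $Ls/2 \leq 1/2$.

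Second, I would invoke the sharper convexity estimate available under $L$-smoothness (a standard consequence of the Baillon–Haddad theorem / co-coercivity of $\nabla g$): for all $x,y \in \Hc$,
\[
g(y) \leq g(x) + \langle \nabla g(y), y - x\rangle - \frac{1}{2L}\|\nabla g(x)-\nabla g(y)\|^2.
\]
I would quickly justify this by applying the descent lemma to the convex function $\varphi(z) := g(z) - \langle \nabla g(y), z\rangle$ (which also has $L$-Lipschitz gradient and attains its minimum at $z=y$) between the point $x$ and its gradient step $x - \tfrac{1}{L}\nabla \varphi(x)$, and then lower-bounding $\varphi(x) - \min \varphi = g(x) - g(y) - \langle \nabla g(y), x-y\rangle$ by the resulting quadratic in $\nabla g(x) - \nabla g(y)$.

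Chaining the two inequalities gives
\[
g(y - s\nabla g(y)) \leq g(x) + \langle \nabla g(y), y-x\rangle - \frac{s}{2}\|\nabla g(y)\|^2 - \frac{1}{2L}\|\nabla g(x) - \nabla g(y)\|^2,
\]
and since $s \leq 1/L$ implies $\tfrac{1}{2L} \geq \tfrac{s}{2}$, we may weaken the last term to $-\tfrac{s}{2}\|\nabla g(x)-\nabla g(y)\|^2$, yielding the desired inequality. There is no real obstacle here; the only subtlety is recalling (and succinctly justifying) the sharpened convexity inequality, which is the essence of what makes this lemma stronger than the plain descent lemma.
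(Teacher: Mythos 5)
Your proposal is correct: the paper itself gives no proof for this lemma, simply citing \cite[Lemma~1]{ACFR}, and your argument --- the standard descent lemma applied along the gradient step $y \mapsto y - s\nabla g(y)$ together with the co-coercivity-type refinement $g(y) \leq g(x) + \langle \nabla g(y), y-x\rangle - \tfrac{1}{2L}\|\nabla g(x)-\nabla g(y)\|^2$, then weakening $\tfrac{1}{2L}$ to $\tfrac{s}{2}$ using $s\leq 1/L$ --- is essentially the same route as the cited proof. Your auxiliary justification via the function $\varphi(z)=g(z)-\langle \nabla g(y),z\rangle$ minimized at $y$ is also sound, so nothing is missing.
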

See \cite[Lemma~1]{ACFR} for a proof.

\begin{lemma}[Discrete Gronwall-Bellman Inequality \cite{brezis2011functional}]\label{lemma:A2}
Let $(a_{k})_{k \in \NN}$ be a sequence of nonnegative numbers such that
\[
a_{k}^{2} \leq c^{2} + \sum_{j=1}^{k} \beta_{j} a_{j}
\]
for all $k \in \NN$, where $(\beta_{k})_{k \in \NN}$ is a summable sequence of nonnegative numbers, and $c \geq 0$. Then
\[
a_{k} \leq c + \sum_{j=1}^{\infty} \beta_{j} \quad \text{for all } k \in \NN.
\]
\end{lemma}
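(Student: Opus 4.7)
The plan is to reduce the quadratic-in-$a_k$ recursion to a linear telescoping inequality for a dominating auxiliary sequence. Concretely, I introduce $A_k := c^2 + \sum_{j=1}^k \beta_j a_j$, with the convention $A_0 = c^2$. By hypothesis, $a_k^2 \leq A_k$, so $a_k \leq \sqrt{A_k}$ (both sides being nonnegative), and $(A_k)_{k \in \NN}$ is monotone nondecreasing since the $\beta_j a_j$ are nonnegative.

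Next, I exploit the one-step increment $A_k - A_{k-1} = \beta_k a_k$, which combined with the pointwise bound gives $A_k - A_{k-1} \leq \beta_k \sqrt{A_k}$. The key algebraic step is the factorization
\[
A_k - A_{k-1} = \bigl(\sqrt{A_k} - \sqrt{A_{k-1}}\bigr)\bigl(\sqrt{A_k} + \sqrt{A_{k-1}}\bigr).
\]
Monotonicity of $(A_k)$ yields $\sqrt{A_k} + \sqrt{A_{k-1}} \geq \sqrt{A_k}$, so, provided $A_k > 0$, dividing both sides by $\sqrt{A_k} + \sqrt{A_{k-1}}$ and using $\tfrac{\sqrt{A_k}}{\sqrt{A_k} + \sqrt{A_{k-1}}} \leq 1$ delivers the clean linear estimate $\sqrt{A_k} - \sqrt{A_{k-1}} \leq \beta_k$. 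The degenerate case $A_k = 0$ forces $c = 0$ and $a_k = 0$, in which case the conclusion is trivial, so no loss of generality.

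Telescoping from $j=1$ to $j=k$ then gives $\sqrt{A_k} \leq \sqrt{A_0} + \sum_{j=1}^k \beta_j = c + \sum_{j=1}^k \beta_j$, and since $(\beta_j)$ is nonnegative and summable, the right-hand side is bounded above by $c + \sum_{j=1}^\infty \beta_j$. Combining with $a_k \leq \sqrt{A_k}$ yields the claim.

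The only mild obstacle is handling the square-root inequality without dividing by zero; this is settled by the trivial case analysis above. Everything else is routine telescoping, and no additional structure beyond monotonicity of $(A_k)$ and summability of $(\beta_k)$ is needed.
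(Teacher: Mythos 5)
Your proof is correct. Note that the paper itself does not prove this lemma: it is stated with a citation to Brezis, so you are supplying the argument that the paper outsources. What you give is the standard discrete analog of the classical continuous Gronwall--Bellman argument: setting $A_k = c^2 + \sum_{j=1}^k \beta_j a_j$, using $a_k \le \sqrt{A_k}$ together with the increment identity $A_k - A_{k-1} = \beta_k a_k$ and the factorization $A_k - A_{k-1} = (\sqrt{A_k}-\sqrt{A_{k-1}})(\sqrt{A_k}+\sqrt{A_{k-1}})$ to get the linear per-step bound $\sqrt{A_k}-\sqrt{A_{k-1}} \le \beta_k$, and then telescoping. This is exactly the discrete counterpart of the derivative estimate $\tfrac{d}{dt}\sqrt{\psi(t)} \le g(t)$ used in the continuous proof, so the argument is complete and self-contained. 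One cosmetic remark: in the degenerate situation your case analysis treats an index $k$ with $A_k = 0$ by noting the conclusion is trivial there, but for the telescoping at a later index $k'$ with $A_{k'}>0$ you still need the per-step inequality at the earlier zero indices; this is immediate, since $A_j = 0$ forces $A_{j-1}=0$ by monotonicity and then $\sqrt{A_j}-\sqrt{A_{j-1}} = 0 \le \beta_j$, so the telescoping is valid without exception. With that one-line observation the proof is airtight.
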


he following lemma is standard and can be found in e.g., \cite{Combettes01}.
\begin{lemma}\label{lemma:A3-det}
Let $(a_{k})_{k \in \NN}$ be a real sequence which is bounded from below such that
\[
a_{k+1} \leq a_k - b_k + \beta_k
\]
for all $k \in \NN$, where $(\beta_{k})_{k \in \NN}$ is a nonnegative real sequence and $(\beta_{k})_{k \in \NN}$ is nonnegative summable real sequence. Then $\sum_{k \in \NN} b_k < +\infty$ and $a_k$ converges.
\end{lemma}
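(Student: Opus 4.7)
The plan is to reduce the inequality to a monotone-convergence situation by constructing an auxiliary sequence that absorbs the forcing term $\beta_k$. Concretely, I would define
\[
c_k := a_k + \sum_{j=k}^\infty \beta_j,
\]
which makes sense since $\sum_j \beta_j <+\infty$, so the tail is finite and tends to zero. The hypothesis $a_{k+1}\le a_k - b_k + \beta_k$ rewrites as
\[
c_{k+1} = a_{k+1} + \sum_{j=k+1}^\infty \beta_j \le a_k - b_k + \beta_k + \sum_{j=k+1}^\infty \beta_j = c_k - b_k,
\]
so in particular $c_{k+1}\le c_k$ since $b_k\ge 0$.

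Next, I would observe that $(c_k)$ is bounded from below: since $a_k$ is bounded from below by hypothesis and the tail series is nonnegative, $c_k \ge \inf_j a_j > -\infty$. A nonincreasing sequence that is bounded from below converges to some limit $c_\infty \in \mathbb{R}$. Telescoping the refined inequality $c_{k+1}\le c_k - b_k$ from $k=1$ to $K$ gives
\[
\sum_{k=1}^{K} b_k \le c_1 - c_{K+1} \le c_1 - c_\infty,
\]
and letting $K\to\infty$ yields $\sum_{k\in\mathbb{N}} b_k <+\infty$.

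Finally, convergence of $(a_k)$ itself follows because $a_k = c_k - \sum_{j=k}^\infty \beta_j$, and both terms on the right-hand side converge: $c_k \to c_\infty$ by the monotone argument above, while $\sum_{j=k}^\infty \beta_j \to 0$ as $k\to\infty$ by summability of $(\beta_k)$. Hence $a_k \to c_\infty$. There is no real obstacle in this proof; the only subtlety is choosing the right correction term, and the tail-sum shift $c_k = a_k + \sum_{j\ge k}\beta_j$ is the natural one that turns the perturbed recursion into a genuinely monotone one. (I note there is a small typo in the statement—the second occurrence of $(\beta_k)_{k\in\NN}$ should read $(b_k)_{k\in\NN}$, i.e., $b_k\ge 0$ and $\sum \beta_k<+\infty$—but the argument above uses only these two requirements.)
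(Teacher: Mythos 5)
Your proof is correct and is the standard tail-sum argument for this quasi-Fej\'er-type lemma; the paper itself offers no proof, simply citing the result as standard, and your construction $c_k = a_k + \sum_{j\ge k}\beta_j$ is exactly the classical route. You are also right that the second occurrence of $(\beta_k)_{k\in\NN}$ in the statement is a typo for $(b_k)_{k\in\NN}$, and your argument uses precisely the intended hypotheses $b_k\ge 0$ and $\sum_k\beta_k<+\infty$.
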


A simple contradiction argument yields the following result.
\begin{lemma}\label{lemma:Asumo}
Let $(c_k)_{k\in \NN}$ and $(d_k)_{k\in \NN}$ be two nonnegative sequences such that $\sum_{k\in \NN}c_kd_k < +\infty$ and $\sum_{k\in \NN}c_k = +\infty$. Then $\liminf_{k \to +\infty} d_k = 0$. 
\end{lemma}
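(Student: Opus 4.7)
The plan is to argue by contradiction, exactly as the hint in the excerpt suggests. Suppose for contradiction that $\liminf_{k\to\infty} d_k > 0$. Then by the very definition of the liminf, there would exist some $\varepsilon > 0$ and some threshold $K \in \NN$ such that $d_k \geq \varepsilon$ for every $k \geq K$.

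From here the conclusion is immediate: I would estimate the tail of the series $\sum_k c_k d_k$ by discarding the first $K-1$ terms and using the uniform lower bound $d_k \geq \varepsilon$, writing
\[
\sum_{k \geq K} c_k d_k \;\geq\; \varepsilon \sum_{k \geq K} c_k \;=\; +\infty,
\]
where the last equality follows because $\sum_{k \in \NN} c_k = +\infty$ and removing finitely many nonnegative terms does not change divergence. Since the first $K-1$ terms are nonnegative and finite, this forces $\sum_{k \in \NN} c_k d_k = +\infty$, contradicting the hypothesis.

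There is no real obstacle here; the entire argument rests on turning the liminf assumption into a uniform lower bound from some index onward, after which the nonnegativity of $c_k$ and the divergence of $\sum c_k$ do all the work. The statement and proof are essentially the discrete analogue of the classical fact that an integrable nonnegative function cannot be bounded away from zero on a set of infinite measure.
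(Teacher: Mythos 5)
Your argument is correct and is precisely the ``simple contradiction argument'' the paper itself invokes for this lemma: a positive $\liminf$ gives a uniform lower bound $d_k \geq \varepsilon$ for all large $k$, which forces $\sum_k c_k d_k \geq \varepsilon \sum_{k \geq K} c_k = +\infty$, contradicting the summability hypothesis. No issues to report.
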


\begin{lemma}\label{lemma:A6}
Given $\alpha > 1$, let $(a_k)_{k\in \NN}$, $(\omega_k)_{k \in \NN}$ be two sequences of nonnegative numbers such that
\[
a_{k+1} \le \left(1 - \frac{\alpha}{k}\right)a_k + \omega_k
\]
for all $k \ge 1$. If $\sum_{k\in \NN}k\omega_k < +\infty, then \sum_{k \in \NN}a_k < +\infty$ and $a_k=o(1/k)$.
\end{lemma}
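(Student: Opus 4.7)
The plan is to multiply the recursion by $k$ and exploit a telescoping cancellation. For $k$ large enough (say $k \ge k_0 := \lceil \alpha \rceil$) the coefficient $1 - \alpha/k$ is nonnegative; on the initial indices $k < k_0$ the recursion can only increase $a_k$ by a finite amount, so without loss of generality I reason from $k_0$ onward. Multiplying the recursion by $k$ gives
\[
k a_{k+1} \le (k-\alpha) a_k + k\omega_k = (k-1) a_k - (\alpha-1) a_k + k\omega_k .
\]
Setting $b_k := k a_{k+1}$, this reads $b_k \le b_{k-1} - (\alpha-1) a_k + k\omega_k$. Summing from $k_0$ to $K$ yields the key inequality
\[
b_K + (\alpha-1)\sum_{k=k_0}^K a_k \le b_{k_0-1} + \sum_{k=k_0}^K k\omega_k .
\]
Since $\alpha > 1$ and $\sum_k k\omega_k < +\infty$, both $\sum_k a_k < +\infty$ and $\sup_K b_K < +\infty$ follow, the latter yielding the weaker $a_k = O(1/k)$.

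To upgrade to $o(1/k)$, I would show that $b_K$ actually converges, then rule out a nonzero limit. Introduce the auxiliary sequence
\[
C_K := b_K + (\alpha-1)\sum_{k=k_0}^K a_k - \sum_{k=k_0}^K k\omega_k .
\]
The per-step bound $b_k \le b_{k-1} - (\alpha-1) a_k + k\omega_k$ implies $C_K \le C_{K-1}$, so $(C_K)$ is non-increasing. It is also bounded below since $b_K \ge 0$ and $\sum_k k\omega_k < +\infty$. Therefore $C_K$ converges, and combined with the already-established summability of $(a_k)$ and $(k\omega_k)$, this gives convergence of $b_K$ to some limit $L \ge 0$.

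Finally, I rule out $L > 0$ by contradiction: if $L > 0$, then for all $k$ sufficiently large, $a_{k+1} = b_k / k \ge L/(2k)$, whose series diverges, contradicting $\sum_k a_k < +\infty$. Hence $L = 0$, i.e., $k a_{k+1} \to 0$, which is exactly $a_k = o(1/k)$. The whole argument is elementary once the telescoping structure obtained by multiplying by $k$ is spotted; the only mildly delicate part is the $o(1/k)$ refinement, which requires promoting boundedness of $b_K$ to convergence via the monotone auxiliary sequence $C_K$ before exploiting summability of $(a_k)$.
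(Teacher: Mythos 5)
Your proof is correct and follows essentially the same route as the paper: multiply the recursion by $k$ (the paper uses $k+1$), telescope to obtain summability of $(a_k)$ and convergence of $ka_k$, then rule out a positive limit using the summability of $(a_k)$. The only difference is that you prove inline what the paper delegates to its auxiliary Lemmas~\ref{lemma:A3-det} and~\ref{lemma:Asumo} (your monotone sequence $C_K$ is the standard proof of the former, and your contradiction argument is the latter).
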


\begin{proof}
Multiplying both sides by $k+1$, we obtain
\[
(k+1)a_{k+1} \leq ka_k - (\alpha-1)a_k + (k+1)\omega_k .
\]
Lemma~\ref{lemma:A3-det} yields the summability claim and $\lim_{k \to +\infty} ka_k$ exists. Applying Lemma~\ref{lemma:Asumo} with $c_k=1/k$ and $d_k=ka_k$ gives $\lim_{k \to +\infty} ka_k=0$.
\end{proof}

\begin{lemma}[Convergence of Non-negative Almost Supermartingales \cite{robbins1971convergence}]\label{lemma:A3}
Given a filtration $\mathscr{R} = \left(\mathcal{R}_{k}\right)_{k \in \NN}$ and the sequences of real-valued random variables $\left(r_{k}\right)_{k \in \NN} \in \ell_{+}(\mathscr{R})$, $\left(a_{k}\right)_{k \in \NN} \in \ell_{+}(\mathscr{R})$, and $\left(z_{k}\right)_{k \in \NN} \in \ell_{+}^x(\mathscr{R})$ satisfying, for each $k \in \NN$,
\[
\EE\left[r_{k+1} \mid \mathcal{R}_{k}\right] - r_{k} \leq -a_{k} + z_{k} \quad (\mathbb{P}\text{-a.s.}),
\]
it holds that $\left(a_{k}\right)_{k \in \NN} \in \ell_{+}^x(\mathscr{R})$ and $\left(r_{k}\right)_{k \in \NN}$ converges $(\mathbb{P}\text{-a.s.})$ to a random variable valued in $[0, +\infty)$.
\end{lemma}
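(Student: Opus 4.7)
The plan is to prove this statement, which is the classical Robbins--Siegmund theorem, by constructing a ``compensated'' auxiliary supermartingale and applying Doob's martingale convergence theorem after a suitable localization.

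First, I would introduce the process
\[
U_k := r_k + \sum_{j=0}^{k-1}\bigl(a_j - z_j\bigr),
\]
which is $\mathcal{R}_k$-measurable since $(r_k), (a_k), (z_k)$ are all adapted to $\mathscr{R}$. A direct computation using the hypothesis and the $\mathcal{R}_k$-measurability of $a_k, z_k$ yields
\[
\EE[U_{k+1}\mid\mathcal{R}_k] \;=\; \EE[r_{k+1}\mid\mathcal{R}_k] + a_k - z_k + \sum_{j=0}^{k-1}(a_j-z_j) \;\le\; r_k + \sum_{j=0}^{k-1}(a_j-z_j) \;=\; U_k,
\]
so $(U_k)_{k\in\NN}$ is an $\mathscr{R}$-supermartingale. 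The difficulty at this stage is that $U_k$ is neither non-negative nor a priori integrable, so Doob's convergence theorem cannot be applied directly.

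To resolve this, I would localize. Fix $N>0$ and $M>0$ and introduce the stopping time
\[
\tau_{N,M} := \inf\Bigl\{k\in\NN : r_0 > M \ \text{ or }\ \sum_{j=0}^{k-1} z_j > N\Bigr\},
\]
which is $\mathscr{R}$-adapted since $\sum_{j=0}^{k-1} z_j$ is $\mathcal{R}_k$-measurable. Because $(z_k)_{k\in\NN}\in \ell_+^1(\mathscr{R})$ (interpreting $\ell_+^x$ as $\ell_+^1$) and $r_0$ is a.s.\ finite, we have $\PP\bigl(\bigcup_{N,M}\{\tau_{N,M}=+\infty\}\bigr) = 1$. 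The stopped process $U_k^{\tau_{N,M}} := U_{k\wedge\tau_{N,M}}$ is again a supermartingale and satisfies $U_k^{\tau_{N,M}} \ge r_{k\wedge\tau_{N,M}} - N \ge -N$, while the initial value $U_0^{\tau_{N,M}}=r_0\mathbf{1}_{\{r_0\le M\}}+\cdots$ is bounded on $\{\tau_{N,M}\ge 1\}$. Hence $U_k^{\tau_{N,M}} + N$ is a non-negative integrable supermartingale, and Doob's convergence theorem yields its a.s.\ convergence to a finite limit. Letting $N,M\to+\infty$ along countable subsequences gives a.s.\ convergence of $(U_k)_{k\in\NN}$ on a set of full probability.

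Finally, I would extract the two conclusions. Since $(z_k)_{k\in\NN}\in\ell_+^1(\mathscr{R})$, the partial sums $\sum_{j=0}^{k-1} z_j$ converge a.s., so the convergence of $U_k$ forces $r_k + \sum_{j=0}^{k-1} a_j$ to converge a.s. As $(a_j)_{j\in\NN}$ is non-negative, $\sum_{j=0}^{k-1} a_j$ is non-decreasing and hence has a limit in $[0,+\infty]$ a.s. If this limit were $+\infty$ on a set of positive probability, then $r_k \to -\infty$ on that set, contradicting $r_k\ge 0$. Therefore $\sum_j a_j < +\infty$ a.s., i.e.\ $(a_k)_{k\in\NN}\in\ell_+^1(\mathscr{R})$, and $r_k$ then converges a.s.\ to a $[0,+\infty)$-valued random variable. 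The main obstacle is precisely the absence of any integrability hypothesis on $r_k$; everything hinges on the localization step, which turns $U_k$ into a bounded-below integrable supermartingale for which Doob's theorem applies, after which the remaining deductions are elementary a.s.\ manipulations.
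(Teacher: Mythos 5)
Your argument is correct: it is the classical compensator-plus-localization proof of the Robbins--Siegmund theorem, and the paper itself offers no proof of Lemma~\ref{lemma:A3}, simply citing \cite{robbins1971convergence}, whose original argument is essentially the one you reproduce. The only cosmetic point is the handling of the event $\{\tau_{N,M}=0\}$ (where the stopped process is the possibly non-integrable constant $r_0$); this is most cleanly dispatched by multiplying $U_k$ by $\mathbf{1}_{\{r_0\le M\}}\in\mathcal{R}_0$, which preserves the supermartingale property and makes the stopped process integrable, after which your limit passage over countable $N,M$ goes through verbatim.
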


\begin{lemma}\label{lemma:meanas}
Let $(X_k)_{k \in \NN}$ be a sequence of random variables such that $\sum_{k \in \NN} \EE[|X_k|] < +\infty$. Then $X_k \to 0 ~ \Pas$.
\end{lemma}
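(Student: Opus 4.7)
The plan is to prove the lemma via the standard Tonelli/monotone-convergence argument on the nonnegative random variable $S := \sum_{k \in \NN} |X_k|$, and deduce almost sure convergence of $X_k$ from the almost sure finiteness of $S$.

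First, I would apply the Tonelli theorem (or equivalently the monotone convergence theorem to the partial sums of the nonnegative sequence $(|X_k|)_{k \in \NN}$) to interchange expectation and summation, obtaining
\[
\EE[S] = \EE\!\left[\sum_{k \in \NN} |X_k|\right] = \sum_{k \in \NN} \EE[|X_k|] < +\infty,
\]
where the last inequality is the hypothesis. Since $S$ has finite expectation, it is finite $\PP$-a.s., i.e., there exists an event $\Omega_0$ with $\PP(\Omega_0) = 1$ such that $S(\omega) < +\infty$ for every $\omega \in \Omega_0$. For each such $\omega$, the real series $\sum_{k} |X_k(\omega)|$ converges, and the general term of a convergent series must tend to zero, so $|X_k(\omega)| \to 0$, hence $X_k(\omega) \to 0$. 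This yields $X_k \to 0$ $\PP$-a.s., which is the claim.

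As an alternative route (useful if one prefers to avoid invoking Tonelli directly), one can combine Markov's inequality with the first Borel--Cantelli lemma: for any fixed $\varepsilon > 0$,
\[
\sum_{k \in \NN} \PP(|X_k| > \varepsilon) \leq \frac{1}{\varepsilon} \sum_{k \in \NN} \EE[|X_k|] < +\infty,
\]
so $\PP(|X_k| > \varepsilon \text{ infinitely often}) = 0$. Intersecting over a countable sequence $\varepsilon_n \downarrow 0$ gives a probability-one event on which $\limsup_{k} |X_k| \leq \varepsilon_n$ for all $n$, hence $X_k \to 0$ a.s. There is no real obstacle here; the only subtlety worth flagging is that the conclusion holds on a single event of probability one (not merely along subsequences), which is exactly what the Tonelli-based argument delivers immediately.
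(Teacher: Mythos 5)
Your proposal is correct. Your primary argument (Tonelli/monotone convergence applied to $S=\sum_{k}|X_k|$, so that $\EE[S]<+\infty$ forces $S<+\infty$ almost surely and the general term of an a.s.\ convergent series tends to zero) is a genuinely different route from the paper, which proves the lemma via Markov's inequality together with the first Borel--Cantelli lemma --- i.e., exactly the ``alternative route'' you sketch at the end. The comparison is mildly in your favor: the Tonelli argument is shorter, avoids any quantifier over $\varepsilon$, and actually yields the stronger conclusion that $\sum_k |X_k| < +\infty$ almost surely (which is more than mere convergence of $X_k$ to $0$), while the Borel--Cantelli argument only gives $X_k\to 0$. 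You are also right to flag the one subtlety in the Borel--Cantelli version, namely that one must intersect the probability-one events over a countable sequence $\varepsilon_n \downarrow 0$ to obtain a single full-measure event; the paper states the conclusion directly from Borel--Cantelli without spelling out that (standard) step. Either proof is acceptable, and both are complete as you present them.
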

\begin{proof}
The proof is standard using Markov's inequality and Borel-Cantelli lemma. Indeed, for any $\varepsilon >0$, we have 
\[
\PP(|X_k| > \varepsilon) \leq \frac{\EE[|X_k|]}{\varepsilon} .
\]
Thus
\[
\sum_{k \in \NN}\PP(|X_k| > \varepsilon) \leq \frac{\sum_{k \in \NN}\EE[|X_k|]}{\varepsilon} < +\infty ,
\]
and we conclude by the Borell-Cantelli lemma.
\end{proof}

\begin{lemma}[Opial's lemma \cite{opial1967theorem}]\label{lemma:A4}
Let $S$ be a nonempty subset of $\Hc$ and $\left(x_{k}\right)$ be a sequence of elements of $\Hc$. Assume that:
\begin{enumerate}[label=(\roman*), ref=\roman*]
    \item\label{item:opial1} Every sequential weak cluster point of $\left(x_{k}\right)$, as $k \rightarrow \infty$, belongs to $S$.
    \item\label{item:opial2} For every $z \in S$, $\lim_{k \rightarrow \infty} \left\|x_{k} - z\right\|$ exists.
\end{enumerate}
Then $\left(x_{k}\right)$ weakly converges as $k \rightarrow +\infty$ to a point in $S$.
\end{lemma}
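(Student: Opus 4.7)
The plan is to follow the classical two-step argument: first establish boundedness together with the existence of weak cluster points in $S$, then show uniqueness of the weak cluster point by exploiting the three-point identity.

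First I would use assumption (ii) to fix any $z \in S$ and note that $\lim_{k\to\infty}\|x_k-z\|$ exists, hence the sequence $(\|x_k - z\|)_{k \in \NN}$ is bounded, which gives boundedness of $(x_k)_{k \in \NN}$ in $\Hc$. Since $\Hc$ is a real Hilbert space (reflexive), every bounded sequence admits a weakly convergent subsequence, so the set of weak sequential cluster points of $(x_k)_{k \in \NN}$ is nonempty. Assumption (i) ensures that every such cluster point belongs to $S$.

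The remaining and only non-trivial step is to prove that this set of weak cluster points is a singleton. Suppose for contradiction that $(x_k)_{k \in \NN}$ admits two distinct weak cluster points $x^1_\infty, x^2_\infty \in S$, realized along subsequences $(x_{k_j})_{j \in \NN}$ and $(x_{k_l})_{l \in \NN}$ respectively. I would expand, for each $k$,
\[
\|x_k - x^1_\infty\|^2 - \|x_k - x^2_\infty\|^2 = 2\langle x_k, x^2_\infty - x^1_\infty\rangle + \|x^1_\infty\|^2 - \|x^2_\infty\|^2.
\]
By assumption (ii), the left-hand side converges as $k \to \infty$; hence the inner-product sequence $\langle x_k, x^2_\infty - x^1_\infty\rangle$ converges to some real number $\ell$.

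Now I would evaluate this limit along the two subsequences. Along $(x_{k_j})_{j \in \NN}$, weak convergence yields $\ell = \langle x^1_\infty, x^2_\infty - x^1_\infty\rangle$, while along $(x_{k_l})_{l \in \NN}$, $\ell = \langle x^2_\infty, x^2_\infty - x^1_\infty\rangle$. Subtracting these two expressions gives $\|x^2_\infty - x^1_\infty\|^2 = 0$, hence $x^1_\infty = x^2_\infty$, a contradiction. Therefore $(x_k)_{k \in \NN}$ has exactly one weak cluster point, which lies in $S$, and by a standard subsequence principle (any subsequence admits a further weakly convergent sub-subsequence with the unique limit) the whole sequence converges weakly to this point.

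The only potentially delicate point is the last subsequence principle; no step is computationally heavy, the key tool being the three-point identity combined with the linearity/weak continuity of the inner product in one of its arguments.
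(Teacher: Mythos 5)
Your argument is correct and is the standard proof of Opial's lemma; the paper itself does not prove this statement but simply cites \cite{opial1967theorem}, so there is nothing to diverge from. All steps check out: boundedness and existence of weak cluster points follow from (ii) and reflexivity of $\Hc$, membership in $S$ from (i), uniqueness from the quadratic expansion $\|x_k - x^1_\infty\|^2 - \|x_k - x^2_\infty\|^2 = 2\langle x_k, x^2_\infty - x^1_\infty\rangle + \|x^1_\infty\|^2 - \|x^2_\infty\|^2$ evaluated along the two subsequences, and the final subsequence principle (every subsequence of a bounded sequence has a weakly convergent sub-subsequence whose limit must be the unique cluster point) is exactly the right way to pass from uniqueness of cluster points to weak convergence of the whole sequence.
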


\appendix
\renewcommand{\thesection}{B}
\section*{Auxiliary Results B}
\begin{subsection}{Closed form solutions of \eqref{eq:dinavd} for quadratic functions} \label{subsec:closed_form_DIN_AVD}
We consider a slightly general version of \eqref{eq:isehd} that we write
\begin{equation}\label{eq:dinavd}\tag{$\mathrm{DIN-AVD}$}
\ddot{x}(t) + \displaystyle{\frac{\alpha}{t}}\dot{x}(t) +  \beta(t) \frac{d}{dt}\nabla  f (x(t)) + b(t) \nabla  f (x(t)) = 0.
\end{equation}
In \cite{ACFR}, the authors derived closed-form solutions to \eqref{eq:dinavd}
for a quadratic objective function 
$f: \mathbb{R}^n \to \langle A x, x \rangle$, which is responsible for the observed exponential asymptotic convergence for $\beta>0$. Here $A$ is a symmetric positive definite matrix. The case of a positive semidefinite matrix $A$ can be treated similarly by restricting the analysis to $\ker(A)^\perp$.
Projecting \eqref{eq:dinavd} onto the eigenspace of $A$, one has to solve $n$ independent one-dimensional ODEs of the form:
\begin{equation}
    \ddot{x}_i(t) + \left( \frac{\alpha}{t} + \beta(t) \lambda_i \right) \dot{x}_i(t) + \lambda_i b(t) x_i(t) = 0, \quad i = 1, \dots, n,
\end{equation}
where $\lambda_i > 0$ is an eigenvalue of $A$. In the following, we drop the subscript $i$.

\paragraph{Case:} $\beta(t) \equiv \beta$, $b(t) = b + \frac{\gamma}{t}$, with $\beta \geq 0$, $b > 0$, $\gamma \geq 0$. The ODE then reads:

\begin{equation}\label{eq:IGAHD_eq}
\begin{aligned}
    \ddot{x}(t) + \left( \frac{\alpha}{t} + \beta \lambda \right) \dot{x}(t) + \lambda \left( b + \frac{\gamma}{t} \right) x(t) = 0.
    \end{aligned}
\end{equation}
\begin{itemize}

\item If $\beta^2 \lambda^2 \neq 4b \lambda$, we set:
\begin{align*}
    \xi &= \sqrt{\beta^2 \lambda^2 - 4b \lambda},\quad 
    \kappa = \lambda \frac{\gamma - \alpha \beta / 2}{\xi},\quad 
    \sigma = \frac{\alpha - 1}{2}.
\end{align*}
The solution to the ODE can be written as:
\begin{equation}\label{eq:hyper_geom}\begin{aligned}
    x(t) = \xi^{\alpha/2} e^{- (\beta \lambda + \xi) t / 2} \left[ c_1 M\left( \frac{\alpha}{2} - \kappa, \alpha, \xi t \right) + c_2 U\left( \frac{\alpha}{2} - \kappa, \alpha, \xi t \right) \right],
    \end{aligned}
\end{equation}
where $c_1$ and $c_2$ are constants determined by the initial conditions. $M$ and $U$ are respectively the Whittaker functions and Kummer’s confluent hypergeometric functions.

\item If $\mathcal{\beta}^2 \lambda^2 = 4b \lambda$, set $\zeta = 2\sqrt{\lambda \left(\gamma - \frac{\alpha \beta}{2} \right)}$. The solution to \eqref{eq:IGAHD_eq} takes the form:
\begin{align*}
    x(t) = t^{-\frac{(\alpha -1)}{2}} e^{-\frac{\beta \lambda t}{2}} \left[c_1 J_{(\alpha -1)/2}(\zeta \sqrt{t}) + c_2 Y_{(\alpha -1)/2}(\zeta \sqrt{t}) \right],
\end{align*}
where $J_{
\nu}$ and $Y_{
\nu}$ are the Bessel functions of the first and second kind. When $\beta > 0$, one can clearly see the exponential decrease brought by the Hessian damping. 

Moreover, for large $t$, $|x(t)|$ behaves as:
    \item If $\beta^2 \lambda^2 > 4b \lambda$, we have:
    \begin{align*}
        |x(t)| = \mathcal{O} \left( t^{-\frac{\alpha}{2} + |\kappa|} e^{-\frac{\beta \lambda - \xi}{2} t} \right)
        = \mathcal{O} \left( e^{-\frac{2b}{\beta} t -\left(\frac{\alpha}{2} - |\kappa| \right) \log t }\right).
    \end{align*}
    \item If $\beta^2 \lambda^2 < 4b \lambda$, whence $\xi \in i \mathbb{R}_*^+$ and $\kappa \in i \mathbb{R}$, we have:
    \begin{align*}
        |x(t)| = \mathcal{O} \left( t^{-\frac{\alpha}{2}} e^{-\frac{\beta \lambda}{2} t} \right).
    \end{align*}
    \item If $\beta^2 \lambda^2 = 4b \lambda$, we have:
    \begin{align*}
        |x(t)| = \mathcal{O} \left( t^{-\frac{2\alpha -1}{4}} e^{-\frac{\beta \lambda}{2} t} \right).
    \end{align*}
\end{itemize}
\end{subsection}

\begin{subsection}{Role of positive damping parameter \texorpdfstring{$\beta_k$}{Lg} in convergence}
\label{subsec:role_beta}
We now show through numerical results that the Hessian damping parameter $\beta_k$ is indeed responsible for exponential asymptotic convergence of values. We consider the case of regression loss in \eqref{eq:reg_loss}  where in we investigate two cases where in we initialize $\beta$ to a small positive value $(10^{-8})$ where in case (a) it stays constant (b) increases quadratically w.r.t. iteration number $k$, s.t. $\beta_k\le\frac{\sqrt{s_k}}{2} $ for all $k$. 
 Figure~\ref{fig:comp_xi2} shows the two cases. One can clearly see the dampening of oscillations and rapid convergence as $\xi^2$ which depends on $\beta_k$ becomes positive in (b) which drives the exponential decay, where in contrast in case (a) the oscillations persist as $\xi^2$ is always negative which is in agreement with \eqref{eq:hyper_geom}. We can see that the loss still decreases in both cases but in (a) with higher oscillations which is due to the fact that \eqref{eq:hyper_geom} has an oscillation factor introduced by a negative $\xi^2$ as seen in case (a) whereas in case (b) we get an exponential decay factor as $\xi^2$ becomes positive.
\begin{figure}[htbp]
    \centering
    \begin{subfigure}{0.45\textwidth}
        \includegraphics[width=\textwidth]{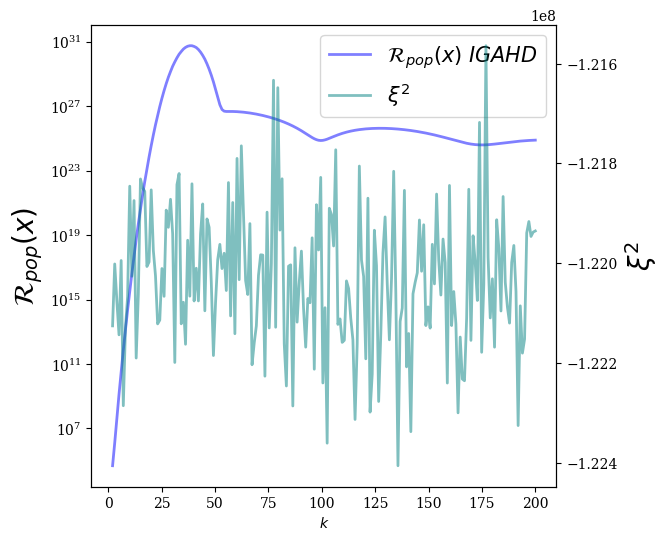}
        \caption{}
        \label{fig:xi_2_neg_subfig1}
    \end{subfigure}%
    \hfill
    \begin{subfigure}{0.45\textwidth}
        \includegraphics[width=\textwidth]{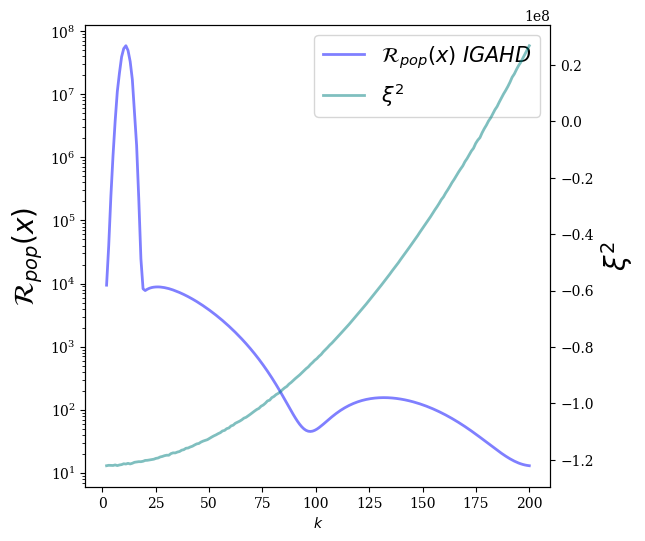}
        \caption{}
        \label{fig:xi_2_pos_subfig1}
    \end{subfigure}
    \caption{Comparison of convergence of objective value  with (a) $\beta_k\sim 0$ (similar to FISTA) and (b) non-zero decreasing $\beta_k$ s.t. $0 < \beta_k < \frac{\sqrt{s_k}}{2}$.}
    \label{fig:comp_xi2} 
\end{figure}
\end{subsection}

\begin{subsection}{Role of step-size \texorpdfstring{$s_k$}{Lg} in  convergence}
\label{subsec:role_step-size}
In Theorem~\ref{thm:vals_thm}\ref{item:vel}, we assumed a constant step-size bounded away from $0$ to derive rates on velocities, this can broadly be understood by considering the way the Hessian-driven damping term influences the dynamics in the continuous system
\begin{equation}\label{eq:cont_IGAHD}
    \begin{aligned}
        \ddot{x}(t) + \alpha(t)\dot{x}(t) + \beta(t)\nabla^2f(x(t))\dot{x}(t) + b(t)\nabla f (x(t)) = 0 .
    \end{aligned}
\end{equation}
The Hessian-driven damping term introduces a force given by $F^d = \beta(t)\nabla^2f(x(t))\dot{x}(t)$, when we use the discretized version of the ODE as given by Algorithm \ref{alg:hdde} with a step-size $s_k$, the damping force at any iteration $k$ can be described as
\begin{equation}\label{discreet_damp}
    \begin{aligned}
        F^d_k = \beta_k \sqrt{s_k} \nabla f(x_k) - \beta_{k-1} \sqrt{s_{k-1}} \nabla f(x_{k-1}) .
    \end{aligned}
\end{equation} for this term to physically represent a damping force, the above equation should satisfy monotonicity conditions at all times, which can't be guaranteed unless the step-size $s_k$ is constant. In the stochastic case, we prove the convergence of values and gradients but not iterates and step-size plays an important role in achieving the rates. Specifically, it ensures that the summability conditions in \eqref{eq:k2} are satisfied. However, the step-size is not the only way to control the variance, where it can also be controlled by using an increasing batch-size but is useful in case of problems with limited data availability. For the same regression problem above, we analyze two cases where: in case~1, we use a less aggressive step-size reduction where $s_k \sim \frac{s_0}{k^{0.2}}$, whereas in case~2, the step-size decreases more rapidly as $s_k \sim \frac{s_0}{k^{0.6}}$, In Figure~\ref{fig:comp_sk}(a) we see that the quantity $(s_k k \sigma_{x,k})$ decreases slower than $\mathcal{O}(1/k)$ rate and hence we see that the objective values do not converge at the theoretically proven rates whereas (b) clearly shows that the derived theoretical rates are achieved when the quantity is summable follows the summability relation \eqref{eq:k2}.

\begin{figure}[!ht]
    \centering
    \begin{subfigure}{0.45\textwidth}
        \includegraphics[width=\textwidth]{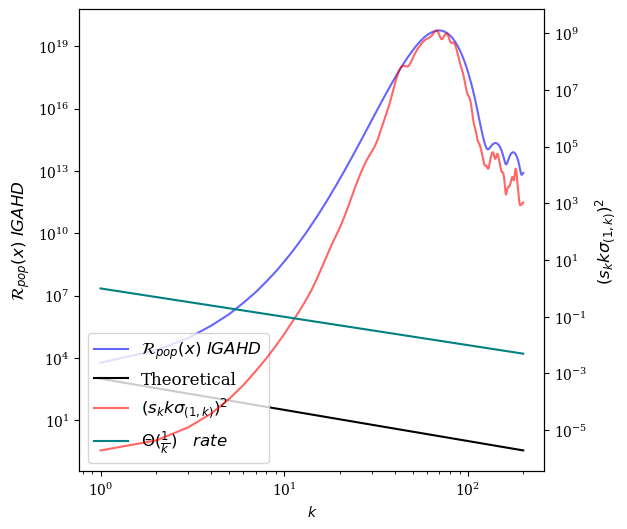}
        \caption{}
        \label{fig:unbounded_l2_subfig_reg_1}
    \end{subfigure}%
    \hfill
    \begin{subfigure}{0.45\textwidth}
        \includegraphics[width=\textwidth]{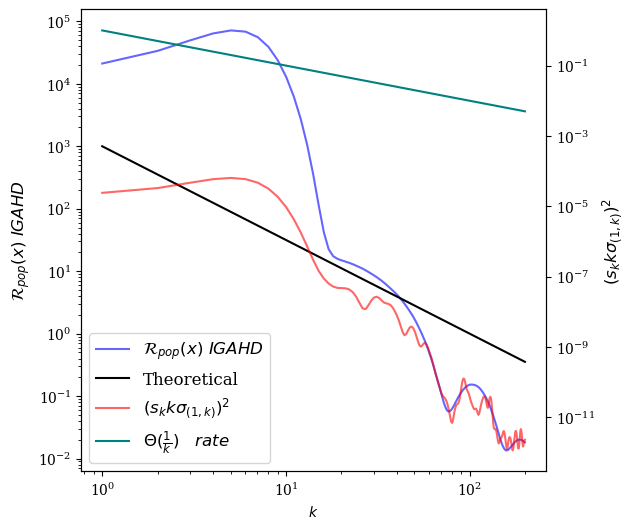}
        \caption{}
        \label{fig:unbounded_l2_subfig_reg_2}
    \end{subfigure}
    \caption{Representatve plots showing the evolution of population loss for 2 different step-size schemes   (a) $s_k = \frac{s_0}{k^{0.2}}$ (b)$s_k = \frac{s_0}{k^{0.6}}$ along with the variance term, when the stochastic variance sequence decreases faster than the $\Theta(\frac{1}{k})$ rates, the observed rate for the objective matches the theoretical one. }
    \label{fig:comp_sk} 
\end{figure}
\end{subsection}

\end{document}